\documentclass[11pt,leqno]{amsart}
\usepackage[margin=3.8cm]{geometry}
\usepackage[utf8]{inputenc}
\usepackage{graphicx, array, url, tcolorbox, xcolor, hyperref, amsmath, amsfonts, amsthm, amssymb, amstext, comment, latexsym}
\usepackage{tikz}
\usepackage{float, comment}
\usepackage{caption}
\usetikzlibrary{arrows.meta}
\usepackage[normalem]{ulem}
\usepackage[active]{srcltx}
\usepackage[colorinlistoftodos,prependcaption]{todonotes}

\newcommand{\bfx}{\ensuremath{\mathbf{x}}}
\newcommand{\bfy}{\ensuremath{\mathbf{y}}}

\newcommand{\Adm}{\operatorname{Adm}}
\newcommand{\Val}{\operatorname{Val}}
\newcommand{\val}{\operatorname{val}}
\newcommand{\dist}{\operatorname{dist}}

\newcommand{\N}{\ensuremath{\mathbb{N}}}
\newcommand{\R}{\ensuremath{\mathbb{R}}}

\newcommand{\Q}{\ensuremath{\mathbb{Q}}}

\newcommand{\diam}{\ensuremath{\ \mathrm{diam}}}
\newcommand{\Lip}{\ensuremath{\mathrm{Lip}}}

\newcommand{\bfc}{\textbf{c}}
\newcommand{\bfn}{\textbf{n}}
\newcommand{\ladim}{\dim_{\mathrm{LA}}}
\newcommand{\adim}{\dim_{\mathrm{A}}}

\newtheorem{question}{Question}
\newtheorem{theorem}{Theorem}[section]
\newtheorem{lemma}[theorem]{Lemma}
\newtheorem{proposition}[theorem]{Proposition}
\newtheorem{definition}[theorem]{Definition}
\newtheorem{corollary}[theorem]{Corollary}
\newtheorem{remark}[theorem]{Remark}
\newtheorem{notation}[theorem]{Notation}
\definecolor{peach}{HTML}{F7965A}
\definecolor{springgreen}{HTML}{C6DC67}
\definecolor{royalblue}{HTML}{0071BC}
\definecolor{periwinkle}{HTML}{7977B8}
\definecolor{forestgreen}{HTML}{009B55}

\title{{On universal elements for doubling geodesic trees}}
\author{Sylvester Eriksson-Bique}
\address{Department of Mathematics and Statistics
P.O. Box 35
FI-40014 University of Jyväskylä}
\email{sylvester.d.eriksson-bique@jyu.fi}
\author{Manisha Garg}
\address{Department of Mathematics, University of Illinois at Urbana-Champaign, Urbana, Illinois, USA}
\email{manisha8@illinois.edu, manishagmath@gmail.com}

\date{\today}

\subjclass[2020]{Primary 30L05,28A80; Secondary 30L10, 05C05, 51F30.}
\keywords{geodesic trees,
bi-Lipschitz embeddings, universal metric spaces, Assouad dimension, lower Assouad dimension, ultrametric spaces}
\begin{document}

\maketitle

\begin{abstract}
For $n\ge 3$ and $c\in(0,1)$, let $\mathcal{GT}(n,c)$ denote the class of geodesic metric trees of valence at most $n$ whose branch points are uniformly relatively separated with constant $c$. We prove that $\mathcal{GT}(n,c)$ has no bi-Lipschitz universal element. More precisely, we construct a family $(T_a)_{a\in[1/4,1/3]}\subset\mathcal{GT}(n,c)$ such that, for every $n_M\geq 3, c_M\in(0,1)$ and every $M\in\mathcal{GT}(n_M, c_M)$, there are at most countably many parameters $a$ for which $T_a$ admits a bi-Lipschitz embedding into $M$, whereas each $T_a$ admits a bi-Lipschitz embedding into $\R^2$. Thus the obstruction is neither dimensional nor caused by a failure of planar embeddability. This gives a negative answer to a question of Chrontsios-Garitsis, Ioannidis, and Vellis~\cite[Question~1.11]{CGIV2024}. Furthermore, we show a complementary positive result for ultrametric spaces: every bounded ultrametric space $X$ admits a bi-Lipschitz embedding into every complete metric space $Y$ satisfying $\adim X<\ladim Y$ where $\adim X$ and $\ladim X$ are Assouad and lower Assouad dimensions, respectively.
\end{abstract}

\tableofcontents

\section{Introduction}
An important question in analysis and geometry is whether a given metric space is equivalent to or naturally contained in a model space with a well understood geometry. In particular, the uniformization problem asks whether a given metric space can be replaced, within a specified category of maps, by a geometrically simpler or more canonical representative.  On the other hand, the embedding problem asks instead whether the space can be realized, with controlled distortion, as a subset of a fixed and well-understood ambient space.   Quasisymmetric uniformization has been particularly effective for arcs, metric surfaces, and planar fractals \cite{TV80,BK02,Bon11}, and it also plays an important role in the study of boundaries of hyperbolic groups \cite{BK05}.  In each problem the choice of maps is essential. Quasisymmetric maps control relative distances and form the natural category in the quasiconformal geometry, whereas bi-Lipschitz maps control all distances by one multiplicative constant and are therefore substantially more rigid; every bi-Lipschitz embedding is quasisymmetric, but not conversely.  For general background, see \cite{HeinonenBookAnalysis}.

The spaces considered in this paper are metric trees and the geometric maps considered are bi-Lispchitz maps.  A metric tree is a compact, connected, and locally connected metric space $T$ in which every two points $x,y\in T$ are joined by a unique arc, denoted by $[x,y]$.  A metric tree is called a \emph{quasiconformal tree} if it is doubling and of bounded turning (see \S~\ref{S:prelim} for definitions). This class contains all quasiarcs and all doubling geodesic trees. Kinneberg considered linearly connected metric trees under the name \emph{quasi-trees} and proved, in particular, that their doubling members have conformal dimension one \cite{Kinneberg:confdim17}.  The term \emph{quasiconformal tree} was subsequently introduced by Bonk and Meyer, who developed the quasisymmetric geometry of the class \cite{BonkMeyer:qcgeodtrees20,BonkMeyer:uniqctrees22}. Quasiconformal trees also arise naturally in planar analysis and complex dynamics.  For example, planar bounded-turning trees are closely related to complements that are John domains \cite[Theorem~4.5]{NV91}, while tree-like Julia sets of suitable semihyperbolic polynomials provide dynamical examples \cite{CG93,CJY94}.  Other standard examples include the continuum self-similar tree and the Vicsek fractal; see \cite{BonkTran:csst21,BonkMeyer:uniqctrees22}. Moreover, in geometric group theory, quasi-arcs have been utilized to study quasi-isometric embedding of $\mathbb H^2$ in hyperbolic groups \cite{BK05:quasiplanes, HruskaRuane2025:quasiplanes}.

For quasiconformal trees, the uniformization problem has a strong positive answer.  The non-branching prototype is the theorem of Tukia and
V\"ais\"al\"a: a metric arc is quasisymmetrically equivalent to an interval if and only if it is doubling and of bounded turning \cite{TV80}.  Bonk and Meyer extended this result from arcs to trees by proving that every quasiconformal tree is quasisymmetrically equivalent to a geodesic tree \cite{BonkMeyer:qcgeodtrees20}.  More precisely, for every $s>1$, the geodesic representative may be chosen to have
Hausdorff dimension at most $s$.  Geodesic trees form a particularly natural model class. If $T$ is a geodesic tree, then the unique arc $[x,y]$ is isometric to $[0,d(x,y)]$. Consequently, $\diam[x,y] = d(x,y)$. Note that the resulting quasisymmetric equivalence need not be bi-Lipschitz.

% Thus all arcs in a quasiconformal tree can be straightened simultaneously, although the resulting quasisymmetric equivalence need not be bi-Lipschitz.
% Under stronger quantitative assumptions on the branching, Bonk and Meyer also characterized the quasiconformal trees that are quasisymmetrically equivalent to the continuum self-similar tree \cite{BonkMeyer:uniqctrees22}.

Uniformization provides a geodesic model separately for each quasiconformal tree. A stronger model-space problem asks whether one can find a single space containing `controlled' copies of every member of a prescribed class of quasiconformal trees. This single space could be a Euclidean space or an element within this class. More precisely, given a class $\mathcal C$ of metric spaces, one seeks a single space $U$ called a universal element, preferably belonging to $\mathcal C$, into which every member of $\mathcal C$ embeds with the prescribed type of control. 

Topologically, the class of metric trees has universal elements \cite{Nad92}.  In the quasisymmetric category, Chrontsios-Garitsis, Ioannidis, and Vellis recently obtained a quantitative counterpart.  For each $n\ge 3$, they constructed a geodesic quasiconformal tree $\mathbb T_n$ that is quasisymmetrically universal for quasiconformal trees of valence at most $n$ whose branch points are uniformly relatively separated \cite{CGIV2024}.  They also showed that every quasiconformal tree with this branch-separation property admits a quasisymmetric embedding into $\R^2$ with quasiconvex image.  Thus both uniformization and universality are positive in this quasisymmetric setting.

The corresponding bi-Lipschitz embedding theory is more rigid.  Doubling is
necessary for a metric space to embed bi-Lipschitzly into a
finite-dimensional Euclidean space, but it is not sufficient in general.
Assouad's embedding theorem gives a fundamental partial substitute: if
$(X,d)$ is doubling and $0<\alpha<1$, then the snowflake
$(X,d^\alpha)$ admits a bi-Lipschitz embedding into some Euclidean space
\cite{Ass83}. This means every doubling metric space admits a quasisymmetric embedding into a Euclidean space, but Assouad's theorem does not generally give a bi-Lipschitz embedding of the original metric. Low-distortion embeddings of this kind are also central in theoretical computer science, where one seeks to represent a complicated metric inside a more tractable Euclidean or tree-like space while approximately preserving all pairwise distances \cite{book:matousekdiscgeom, NR03, Ind01, Lin02}.

In the class of doubling geodesic trees, Gupta, Krauthgamer, and Lee proved that every
tree embeds bi-Lipschitzly into a finite-dimensional Euclidean space \cite{GKL03}; see also the different approach of Lee, Naor, and Peres \cite{LNP09}. David and Vellis subsequently showed that a quasiconformal tree embeds bi-Lipschitzly into some Euclidean space if and only if its set of leaves does so \cite{DV22:qctrees}. Finally, David, Eriksson-Bique, and Vellis proved that every quasiconformal tree admits a bi-Lipschitz embedding into some $\R^N$, where both $N$ and the bi-Lipschitz constant depend only on the doubling and bounded-turning constants of the tree \cite{DEBV23:qctrees}.

These theorems give external Euclidean models to embed quasiconformal trees, but they do not produce an internal universal model that is itself a tree.
To formulate the latter problem, let $n\ge 3$ and $c\in(0,1)$, and let
$\mathcal{GT}(n,c)$ denote the class of geodesic metric trees with valence at
most $n$ and uniformly relatively separated branch points with constant $c$;
see Section~\ref{S:prelim}.  
A member $U\in\mathcal{GT}(n,c)$ is called
\emph{bi-Lipschitz universal} for $\mathcal{GT}(n,c)$ if every tree in this
class admits a bi-Lipschitz embedding into $U$, with no requirement that the
distortion be uniform over the class.  
The assumptions defining $\mathcal{GT}(n,c)$ imply a doubling bound depending only on $n$ and $c$ \cite[Lemma~2.5]{CGIV2024}.  
Since every geodesic tree is $1$-bounded turning,
the theorem of David, Eriksson-Bique, and Vellis therefore gives, for fixed
$n$ and $c$, a dimension $N=N(n,c)$ such that every member of
$\mathcal{GT}(n,c)$ embeds bi-Lipschitzly into the same Euclidean space
$\R^N$.  Chrontsios-Garitsis, Ioannidis, and Vellis asked whether the
Euclidean target can be replaced by a universal target inside the class:

\begin{question}[{\cite[Question~1.11]{CGIV2024}}]\label{ques}
Does there exist a tree $T_0\in\mathcal{GT}(n,c)$ into which every member of
$\mathcal{GT}(n,c)$ embeds bi-Lipschitzly?  If such a tree exists, is it
bi-Lipschitz homeomorphic to $\mathbb T_n$?
\end{question}

We answer this question negatively. More precisely, for each fixed $c \in (0,1)$ we construct an uncountable one-parameter family $\{T_{a, c} \mid a \in [1/4,1/3]\}$ of trivalent geodesic trees such that $T_a \in \mathcal{GT}(3, c)\subseteq \mathcal{GT}(n,c)$ for every $a\in [1/4,1/3]$ and  $n\ge 3$.  Our main theorem is the following stronger countability obstruction.

\begin{theorem}\label{thm:mainthm1}
Let $n\ge3$, let $c\in(0,1)$, and let
$M\in\mathcal{GT}(n_M,c_M)$ for $n_M$ and $c_M$ possibly distinct from $n$ and $c$, respectively. Then the set
\[
    \left\{
    a\in[1/4,1/3]:
    T_{a,c}\text{ admits a bi-Lipschitz embedding into }M
    \right\} 
\]
is at most countable. For each $a\in[1/4,1/3]$, $T_{a,c}\mathcal{QC}(n,c)$.
\end{theorem}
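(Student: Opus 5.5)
We have not yet seen the construction of $T_{a,c}$, so the plan assumes the natural one: a self-similar trivalent geodesic tree in which the branch points along a geodesic spine sit at positions governed by the scale ratio $a$. For instance, at each generation an edge of length $\ell$ carries a branch point splitting it in a ratio depending on $a$, and subtrees are attached at scales $a\ell$. Membership in $\mathcal{GT}(3,c)$ should follow directly from the construction, since the ratios lie in $[1/4,1/3]$, and then $\mathcal{GT}(3,c)\subseteq\mathcal{GT}(n,c)$. Planar embeddability (the final claim) would be handled separately by an explicit drawing. The main task is the countability statement.

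The strategy is a tangent/blow-up argument that produces an $a$-dependent invariant which $M$ can realize only countably often.

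First, if $f\colon T_{a,c}\to M$ is $L$-bi-Lipschitz, we rescale around branch points at scales $a^{-k}$ and pass to a pointed Gromov–Hausdorff limit. This is possible because all spaces are uniformly doubling by \cite[Lemma~2.5]{CGIV2024}. The limit is a bi-Lipschitz embedding of a tangent of $T_{a,c}$, which by self-similarity is essentially $T_{a,c}$ itself (or its infinite version), into a tangent of $M$. Tangents of $M$ are again geodesic trees with valence at most $n_M$ and relatively separated branch points.

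The key rigidity fact is that in a geodesic tree, a bi-Lipschitz image of a tripod must branch near the image of the branch point. Precisely, the images of the three legs are arcs, and their pairwise geodesic hulls meet at a branch point of $M$ within distance $\lesssim L\cdot$(scale) of $f(\text{branch point})$. The map $f$ therefore induces a coarse injection from the branch points of $T_{a,c}$ into the branch points of $M$. By uniform relative separation in $M$ (constant $c_M$), this injection is locally finite-to-one with controlled multiplicity. It also nearly preserves the tree combinatorics: it respects the "between" relation, since geodesic trees are $1$-bounded turning and images of arcs stay in hulls.

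Second, we need a countable invariant. The obvious candidate is the set of branch-point distance ratios of $M$ along geodesics, which is an uncountable set in general, so this cannot be the invariant directly. Instead we fix a countable dense family of "finite combinatorial patterns" in $M$. We choose a countable dense subset $D\subset M$, using separability of compact $M$. We then associate to each embedding the data $(p,q,k)$, where $p,q\in D$ are rational-position points near the images of two fixed branch points, and $k$ is the generation. From this data, the infinite chain of branch points of $M$ lying on $[f(x),f(y)]$ is determined. Because $f$ is bi-Lipschitz, the geodesic $[f(x),f(y)]\subset M$ must contain branch points at relative positions approximating those of $T_{a,c}$ up to factor $L$ at every scale. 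The limiting asymptotic ratio of consecutive branch-point spacings along this geodesic must then equal $a$ exactly: errors do not accumulate because they are bounded multiplicatively, while the exponential rate is read off as $\lim_k \frac1k\log(\text{spacing}_k)=\log a$. Hence $a$ is a function of a geodesic segment of $M$ with endpoints in a countable set. It is here that the structure of geodesic trees is essential: the geodesic between two points of $D$ is unique, and its branch-point pattern is a fixed sequence.

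The main obstacle is this last step. We must ensure that the geodesic $[f(x),f(y)]$ in $M$ is pinned down by countable data, and that its branch points cannot be "shared ambiguously" so as to fit several values of $a$. A single geodesic in $M$ whose branch-point spacings decay like $a^k$ for a definite $a$ yields a well-defined exponent. The danger is that spacings in $M$ vary irregularly, so that $f$ could select a sub-collection of branch points fitting some $a$. To rule this out, we would use the fact that the embedded tree is trivalent with side branches of length comparable to the scale. Every branch point of $M$ on the geodesic with a side branch of relative length $\gtrsim 1/L$ must be used, up to bounded multiplicity, so the selection is essentially forced. The exponent is then a limit computed from all such branch points, and two different values of $a$ sharing the same pair $(p,q)$ would give contradictory limits. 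The final count is that each pair in $D\times D$ yields at most one $a$, so the set of admissible $a$ is countable.
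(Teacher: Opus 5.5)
There is a genuine gap, and it sits exactly at the step you flag as the main obstacle: $a$ is not a function of the geodesic $[f(x),f(y)]_M$. The countable data fix that geodesic and the set of all branch points of $M$ on it. They do not fix which of those branch points are images of branch points of $T_{a,c}$, nor of which generation; that is determined only by $f$.

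Your remedy is that every branch point of $M$ on the geodesic with a side branch of relative length $\gtrsim 1/L$ must be used. This is false, because $M$ may carry branches that $f(T_{a,c})$ never enters. For instance, take a geodesic of $M$ with branch points at the dyadic positions $2^{-j}$ and side edges of length comparable to $2^{-j}$. It receives a chain with branch points at positions $a^k$ (and side edges of comparable length) with a bi-Lipschitz constant independent of $a\in[1/4,1/3]$: send $a^k$ to $2^{-\lceil k\log_2(1/a)\rceil}$, interpolate linearly, and skip the other dyadic branch points. So a single geodesic of $M$ serves every $a$, and the exponent you compute belongs to the selected subsequence, not to $M$.

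Counting does not rescue this. Relative separation in $M$ bounds the number of candidate images at each generation, but over infinitely many generations the possible selections can form an uncountable family, so one embedding yields no countable invariant. Your tangent step is not used afterwards, and it would in any case discard the fixed data in $M$ on which any pigeonhole must rest.

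Note also what happens in the paper's $T_{a,c}$. The branch points on the horizontal segment are dense. The decay rate along a nested chain of subdivision intervals ranges between $\ln a$ and $\ln(1-2a)$ depending on the address. What separates parameters is the measure-typical exponent $\alpha_a=2a\ln a+(1-2a)\ln(1-2a)$ from Lemma~\ref{lem:lawoflargenum}, not $\ln a$.

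The missing idea is a synchronization mechanism that compares two embeddings, rather than extracting an invariant from one.

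\begin{itemize}
\item The paper pigeonholes over three countable choices: a rational global constant $L'$; a rational $L$ nearly attained as $\Lip(f_a|_{H_a})$; and a pair $m,m'\in\mathcal B(M)$ that are images of adjacent branch points $x_a,y_a$ with $d_M(m,m')>L(1-\delta)\,\ell$, where $\ell=d_{T_a}(x_a,y_a)$. An uncountable class then contains $a\neq b$ with $|a-b|<\delta$.
\item Near-maximal stretching puts $f_a(m_a^1)$ and $f_b(m_b^1)$ on $[m,m']_M$ within $6L\delta\ell$ of each other. Both are branch points of $M$ of height $\gtrsim c\,\ell/L'$, so uniform relative separation in $M$ forces them to coincide (Lemma~\ref{lem:maxstretch}).
\item The defect-budget Lemma~\ref{lem:shrinkingsmall} shows that outside a set of measure at most $\eta|J_a|$, every ancestor interval remains nearly maximally stretched. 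So the coincidence propagates through all generations.
\item A point $z\in[m,m']_M$ with good preimages in both trees then yields a common address $\omega$ with typical level-$n$ intervals. Hence $|I^a_\omega|/|I^b_\omega|$ is exponentially small in $n$, while the shared endpoint images force it into $[(L')^{-2},(L')^2]$.
\end{itemize}

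It is the closeness of $a$ and $b$, combined with the near-saturated Lipschitz constant, that forces two embeddings onto the same branch points of $M$. Your proposal has no substitute for this.
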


% \begin{theorem}\label{thm:mainthm1}
% Let $n\ge 3$, let $c\in(0,1)$, and let
% $M\in\mathcal{GT}(n,c)$.  Then there are at most countably many parameters
% $a\in[1/4,1/3]$ for which $T_a$ admits a bi-Lipschitz embedding into $M$.
% \end{theorem}

Indeed, fix $n\ge3$ and $c\in(0,1)$. If $U\in\mathcal{GT}(n,c)$ were bi-Lipschitz universal, then every tree $T_{a,c}$ would admit a bi-Lipschitz embedding into $U$.
This contradicts Theorem~\ref{thm:mainthm1}, since $[1/4,1/3]$ is uncountable.

\begin{corollary}\label{cor:no_universal_tree}
For every $n\ge 3$ and $c\in(0,1)$, the class
$\mathcal{GT}(n,c)$ has no bi-Lipschitz universal element. In fact, the class
$$
    \bigcup_{\substack{n\geq3\\ c\in(0,1)}}
    \mathcal{GT}(n,c)$$
has no bi-Lipschitz universal element.
\end{corollary}

% \begin{proof}
% Suppose that $U$ were a bi-Lipschitz universal element for the displayed
% union. Since $U$ belongs to this union, there exist $n_0\geq3$ and
% $c_0\in(0,1)$ such that $U\in\mathcal{GT}(n_0,c_0)$. Universality for the union would imply that every member of $\mathcal{GT}(n_0,c_0)$ embeds bi-Lipschitzly into $U$. Thus $U$ would be a bi-Lipschitz universal element for $\mathcal{GT}(n_0,c_0)$, contradicting Corollary~\ref{cor:no_universal_tree}.
% \end{proof}

The obstruction is neither topological nor dimensional.  The trees $T_a$ have
the same combinatorial construction, and all of them satisfy
\[
    \dim_H(T_a)=\dim_N(T_a)=\adim (T_a)=1.
\]
We note that the Assouad dimension is determined by a general principle involving uniform bounds over Whitney scales, which seems to be of use in other settings as well, see Lemma \ref{lem:whitney_scale_assouad} for details. 

Every $T_a$ admits a bi-Lipschitz embedding into $\R^2$.  Thus every individual example has an especially simple Euclidean model, while no single tree in
$\mathcal{GT}(n,c)$ can contain bi-Lipschitz copies of the entire family.  The
parameter $a$ is detected instead by the relative lengths of branches through
successive generations, information that is invisible to topology and to the
standard notions of dimension but rigid under bi-Lipschitz embeddings.

%The iterated subdivision used to construct $T_a$ also determines a rooted tree of intervals whose space of infinite branches carries a
%natural ultrametric. 
Trees are the simplest possible connected metric spaces. We also consider the
corresponding universality problem for ultrametric spaces and obtain a
positive result discussed below. This is made possible by the fact that such spaces are totally disconnected.

\subsection{A positive comparison: ultrametric spaces}

Theorem~\ref{thm:mainthm1} concerns connected geodesic trees and to clarify the role played by connectedness and the geometry of the connecting arcs, we also consider the corresponding embedding problem for ultrametric spaces. An ultrametric space is encoded by a rooted tree of nested balls, and the distance between two points is determined by the first level at which their branches separate. Unlike a geodesic tree, however, an ultrametric space contains no connecting arcs whose lengths must be preserved consistently.

In this setting, for an ultrametric space $X$, picking a target which has availability of branching at higher rate than $X$ gives a positive embedding theorem.

\begin{theorem}
\label{thm:um_embedding_intro}
Let $X$ be a bounded ultrametric space and let $Y$ be a complete
metric space. If
\[
    \adim X<\ladim Y,
\]
where $\adim X$ is the Assouad dimension of $X$ and $\ladim$ is the lower Assouad dimension of $Y$, 
then $X$ admits a bi-Lipschitz embedding into $Y$.
\end{theorem}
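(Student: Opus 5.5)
The plan is to build the embedding scale by scale, using the tree-of-balls structure of $X$ and the abundance of separated points in $Y$ at every scale. Fix $s$ with $\adim X<s<t<\ladim Y$.

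\textbf{Counting in $X$.} Since $X$ is ultrametric with Assouad dimension less than $s$, there is $C_X$ such that for every ball $B(x,r)$ and every $0<\rho<r$, the ball $B(x,r)$ splits into at most $C_X(r/\rho)^s$ disjoint balls of radius $\rho$. In an ultrametric space, balls of a fixed radius either coincide or are disjoint, and points in distinct balls are at distance at least $\rho$.

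\textbf{Packing in $Y$.} Since $\ladim Y>t$, there is $c_Y>0$ such that for every $y\in Y$ and every $0<\rho<R\le\diam Y$, the ball $B(y,R)$ contains at least $c_Y(R/\rho)^t$ points that are pairwise $\rho$-separated. Here we use that $\ladim Y>0$ forces $Y$ to be uniformly perfect, and we rescale $X$ so that $\diam X\le \diam Y$; if $Y$ is unbounded there is nothing to check.

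\textbf{Choice of scale ratio.} Fix a ratio $\lambda\in(0,1)$, small, and use the scales $r_k=\lambda^k\diam X$. A ball of generation $k$ in $X$ has at most $N=C_X\lambda^{-s}$ children at generation $k+1$. Given any $y\in Y$ and radius $\epsilon r_k$, with $\epsilon$ a small fixed constant, we can find at least $c_Y(\epsilon\lambda^{-1}/(4\epsilon))^t\cdot\lambda^{0}$ points in $B(y,\epsilon r_k/2)$ that are pairwise $\epsilon r_{k+1}\cdot 4$-separated. More cleanly, we get $c_Y 8^{-t}\lambda^{-t}$ points in $B(y,\epsilon r_k/2)$ that are $8\epsilon r_{k+1}$-separated. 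Since $t>s$, choosing $\lambda$ small makes $c_Y8^{-t}\lambda^{-t}\ge C_X\lambda^{-s}$, so the children can be assigned injectively.

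\textbf{Inductive construction.} Assign to each generation-$k$ ball $B$ a point $y_B\in Y$. The root gets an arbitrary point. Each child $B'$ of $B$ gets a distinct point $y_{B'}$ from the separated family inside $B(y_B,\epsilon r_k/2)$.

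\textbf{Definition of $f$.} For $x\in X$, let $B_k(x)$ denote its generation-$k$ ball. The sequence $y_{B_k(x)}$ is Cauchy, since successive points are within $\epsilon r_k/2$ of each other and the $r_k$ decay geometrically. Completeness of $Y$ gives a limit $f(x)$, and
\[
d\bigl(f(x),y_{B_k(x)}\bigr)\le \epsilon r_k/(2(1-\lambda))\le\epsilon r_k.
\]

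\textbf{Bi-Lipschitz estimates.} Take $x\ne x'$ and let $k$ be the last generation in which they share a ball. Then $d(x,x')$ is comparable to $r_k$: it is at most $r_k$, and at least $r_{k+1}$ up to the standard ultrametric adjustment of scales.

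The upper bound is $d(f(x),f(x'))\le 2\epsilon r_k+\epsilon r_k$.

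For the lower bound, the children $B_{k+1}(x)$ and $B_{k+1}(x')$ are distinct, so their points $y$ are $8\epsilon r_{k+1}$-separated. Each of $f(x),f(x')$ lies within $\epsilon r_{k+1}$ of its point. Hence
\[
d(f(x),f(x'))\ge 6\epsilon r_{k+1}\gtrsim d(x,x').
\]

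\textbf{Main obstacle.} The main difficulty is making the two counting statements uniform and compatible. On the $X$ side we need a genuine Assouad covering bound at all scales. On the $Y$ side we must derive the lower packing bound from the definition of $\ladim$; this needs a scale condition $R\le\diam Y$ and possibly uniform perfectness, and we must handle bounded $Y$ by rescaling $X$. The comparison $d(x,x')\asymp r_k$ also requires discretizing the ultrametric distances to the scales $r_k$, which costs only a factor $\lambda^{-1}$.
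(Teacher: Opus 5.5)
Your proposal is correct and follows essentially the same route as the paper. Both arguments use geometric scales $r_k=\lambda^k$ and bound the number of children of each ball by the Assouad estimate. Both take a maximal separated set inside a half-size target ball, whose cardinality is bounded below by the lower Assouad estimate, obtain $f(x)$ as a limit by completeness, and read off the bi-Lipschitz bounds from the first splitting level. The paper tracks nested closed balls $\overline{B}_Y(y_v,r_n)$ with $4r_{n+1}$-separated child centres rather than your $\epsilon$-scaled points, which is purely cosmetic.

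One harmless slip: with $R=\epsilon r_k/2$ and $\rho=8\epsilon r_{k+1}$ you get $R/\rho=(16\lambda)^{-1}$. So the count is $c_Y(16\lambda)^{-t}$ rather than $c_Y8^{-t}\lambda^{-t}$, and you need $\lambda<1/16$ so that $\rho<R$. This changes nothing, since $\lambda$ is taken small.
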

In particular, every fixed complete $Q$-Ahlfors regular metric space
is a bi-Lipschitz universal target for bounded ultrametric spaces of
Assouad dimension  $\, < \,Q$. The proof recursively
realizes the ball tree of $X$ by a separated family of nested balls
in $Y$: the Assouad estimate bounds the number of children of each
source ball, while the lower Assouad estimate supplies sufficiently
many separated locations for them in the target. 

This provides a positive comparison with
Corollary~\ref{cor:no_universal_tree}. It shows that hierarchical
branching alone does not explain the failure of bi-Lipschitz
universality for geodesic trees. The obstruction in
Theorem~\ref{thm:mainthm1} comes from the additional requirement that
the geometry of the connecting arcs remain compatible through
arbitrarily many generations.

There are several very closely related results on embeddings of ultrametric spaces, and our proof here follows known strategies. See Section \ref{S:ultrametric} for more discussion on the  background and related work.

\subsection{Proof strategy and main tools}

The form of the proof is dictated by the way in which the parameter $a$ is encoded in the trees $T_a$. At any fixed number of generations, the subdivision structures of $T_a$ and $T_b$ are quantitatively close whenever $a$ and $b$ are close. Consequently, a comparison at only finitely many scales cannot distinguish the two trees up to bi-Lipschitz equivalence: any finite discrepancy may be absorbed into the bi-Lipschitz constant. The distinction between the parameters becomes visible only after passing through arbitrarily many generations. Indeed, the typical length of a level-$k$ descendant in $T_a$ is of exponential order
$$
\exp(k\alpha_a),
\qquad
\alpha_a=2a\log a+(1-2a)\log(1-2a),
$$
and the strict monotonicity of $a\mapsto\alpha_a$ implies that the typical lengths associated with two distinct parameters eventually become exponentially incomparable.

To exploit this difference, however, it is necessary to compare descendants in $T_a$ and $T_b$ whose images determine the same arc in the target tree. An arbitrary pair of bi-Lipschitz embeddings need not preserve corresponding subdivisions, so this alignment cannot be assumed. We therefore first use countability and pigeonholing to choose two nearby parameters $a\neq b$, embeddings with common quantitative bounds, and suitable horizontal intervals having the same endpoint images in the target.

Moreover, we require the near maximal-stretching of segment by bi-Lipschitz map to propagate this initial agreement of endpoints. The key ingredient here is a one-dimensional near-maximal-stretching principle. If a Lipschitz map nearly realizes its Lipschitz constant on an interval, then only a small proportion of its descendant intervals can exhibit a definite loss of stretch. This is a one-dimensional analogue of the regular-square argument of Burago and Kleiner~\cite{buragokleiner}: near-saturation of the global Lipschitz bound leaves only a small ``defect budget'' for disjoint subintervals on which the map stretches substantially less. A related extremal viewpoint appears in Preiss's work on differentiability of Lipschitz maps in Banach spaces~\cite{Preissdiff}, where one considers points and directions along which a Lipschitz map almost realizes its maximal stretching.

Finally, this synchronization is needed for all deep enough levels and is available only along descendants that remain well-stretched with typical length as described above. The measure-theoretic ingredients ensure that both conditions are abundant.  Our selection of good intervals across many refinement levels as we proceed deeper in the iterations is similar in spirit to density and pigeonhole arguments appearing in work of Semmes~\cite{semmes}. 

% \begin{theorem}\label{thm:universal_ultrametric}
% Let $X$ be a bounded ultrametric space and let $Y$ be a $Q'$-Ahlfors regular ultrametric space, then $X$ embeds bi-Lipschitzly in $Y$.    
% \end{theorem}

\subsection{Structure of the paper} 
Section~\ref{S:prelim} contains the necessary background and preliminary results from the literature. We also prove a Whitney-scale covering lemma for Assouad dimension, which allows us to control the Assouad dimension of a metric space by combining estimates near a closed subset with estimates away from it. In Section~\ref{S:intervals_lip_maps}, we study Lipschitz maps on iteratively subdivided intervals. The main result of this section is a maximal-stretching principle which produces well-stretched subintervals at arbitrarily fine scales. In Section~\ref{S:T_a}, we construct the family of geodesic trees $(T_a)_{a\in[1/4,1/3]}$ and prove their basic geometric and metric properties. In Section~\ref{S:mainthmproof}, we prove Theorem~\ref{thm:mainthm1} and in Section~\ref{S:ultrametric} we prove bi-Lipschitz embeddability results for ultrametric spaces.

% The proof combines the maximal-stretching results from Section~\ref{S:intervals_lip_maps} with the tree geometry of the target space to show that a fixed tree $M\in\mathcal{GT}(n,c)$ can contain bi-Lipschitz copies of at most countably many of the trees $T_a$.

\subsection*{Acknowledgments} 
The authors gratefully acknowledges the support and hospitality of the Hausdorff Institute for Mathematics (HIM), Bonn, where the project was started during the Trimester program on Metric Analysis. This work was funded by the Deutsche Forschungsgemeinschaft (DfG, German Research Foundation) under the German Excellence Strategy – EXC-2047/1 – 390685813. We also thank IMPAN for support and hospitality during the Simons semester on Geometric analysis. This work was partially supported by the Simons Foundation grant (award no. SFI-MPS-T-Institutes-00010825) and from State Treasury funds as part of a task commissioned by the Minister of Science and Higher Education under the project “Organization of the Simons Semesters at the Banach Center - New Energies in 2026-2028” (agreement no. MNiSW/2025/DAP/491).

The first author is partially supported by the Research Council of Finland via the project GeoQuantAM: Geometric and Quantitative Analysis on Metric spaces, grant no. 354241. The second author was partially supported by the McNamara Education Grant by World Bank.

The authors are grateful to Efstathios-Konstantinos Chrontsios-Garitsis, Jeremy Tyson  and Vyron Vellis for many helpful discussions.

\section{Preliminaries and background}\label{S:prelim}
In this section, we collect the terminology and elementary facts used in the paper.
We first fix notation for Lipschitz maps and Hausdorff measure, and then
record the tree-geometric consequences of a bi-Lipschitz embedding. We end
with the dimension notions used to describe the examples and a
Whitney-scale lemma for Assouad dimension.

\subsection{Notation and mappings}
Throughout the paper, $\bfn \ge 3$ and $\bfc\in(0,1)$ are fixed. We write $\mathcal{GT}(\bfn, \bfc)$ for the class of geodesic metric trees with valence at most $\bfn$ and uniformly relatively separated branch points with constant $\bfc$; to avoid
burdening the notation, we write $T_a$ instead of $T_{a,\bfc}$.  When the
parameters of a target tree need not agree with those of the source family,
we denote them by $\bfn_M$ and $\bfc_M$.

For any closed interval $J \subset \R$, write $J = [s_J, t_J]$, where $s_J < t_J$ and denote its length by $$|J|:=t_J-s_J.$$ 
We denote Lebesgue measure on $\R$ by $\mathcal L$ and the Hausdorff measure on $\R$ by $\mathcal H$ and both measures coincide in $\R$.

 A mapping $f: X\to Y$ between metric spaces is called \emph{Lipschitz} if there is a constant $L>0$ such that
$d(f (x), f (y))\, \le\, Ld(x, y)$ for all $x, y \in X$. The smallest such $L$ is called the Lipschitz constant, and it is denoted by $\Lip(f)$.

A mapping $f: X\to Y$ between metric spaces is called called \emph{$(1/k, K)-$bi-Lipschitz} if there are constants $k, K > 0$ such that $$1/k \,d(x, y) \le d(f (x), f (y) \le K\,d(x, y) \text{ for all } x, y \in X.$$ The smallest $K$ and the smallest $k$ satisfying this are called the Lipschitz and lower-Lipschitz constants for $f$. The pair $(k, K)$ will be referred to as the bi-Lipschitz constants of $f$. 

We will use the following standard distortion estimate for Hausdorff
measure. For $s\ge 0$ and $\delta>0$, let $\mathcal H^s_\delta$
denote the $s$-dimensional Hausdorff content obtained using covers by sets
of diameter at most $\delta$.

\begin{lemma}\label{lem:hmeausre_lispchitz_maps}
 Let $(X,d_X)$ and $(Y,d_Y)$ be metric spaces, let $s\ge 0$, and let $f:X\to Y$ be
$L$-Lipschitz. Then for every $E\subset X$ and every $\delta>0$,
\[
\mathcal H^{s}_{L\delta}\bigl(f(E)\bigr)\ \le\ L^{s}\,\mathcal H^{s}_{\delta}(E).
\]
In particular, $
\mathcal H^{s}\bigl(f(E)\bigr)\ \le\ L^{s}\,\mathcal H^{s}(E).$
\end{lemma}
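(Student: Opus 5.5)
The plan is to work directly from the definition of Hausdorff content and push covers of $E$ forward through $f$. Fix $E\subset X$, $s\ge 0$ and $\delta>0$. If $\mathcal H^s_\delta(E)=\infty$ there is nothing to prove, so assume it is finite. Recall that
\[
\mathcal H^s_\delta(E)=\inf\Bigl\{\sum_i (\diam U_i)^s : E\subset \bigcup_i U_i,\ \diam U_i\le \delta\Bigr\},
\]
with the usual normalization constant, if any, appearing identically on both sides and therefore playing no role.

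Given $\varepsilon>0$, I would take a countable cover $\{U_i\}$ of $E$ with $\diam U_i\le\delta$ and $\sum_i(\diam U_i)^s\le \mathcal H^s_\delta(E)+\varepsilon$. The sets $f(U_i\cap E)$ then cover $f(E)$. The Lipschitz bound gives $\diam f(U_i\cap E)\le L\diam U_i\le L\delta$, so $\{f(U_i\cap E)\}$ is an admissible cover for $\mathcal H^s_{L\delta}$. Hence
\[
\mathcal H^s_{L\delta}(f(E))\le \sum_i L^s(\diam U_i)^s\le L^s\bigl(\mathcal H^s_\delta(E)+\varepsilon\bigr),
\]
and letting $\varepsilon\to0$ yields the first inequality. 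Two degenerate situations need a brief remark: for $s=0$ one uses the convention $0^0=1$, or equivalently counts nonempty sets; and if $L=0$, then $f(E)$ is at most a point, so one instead applies the argument with any $L'>0$ and lets $L'\to0$.

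For the second statement, let $\delta\to0$. Since $L\delta\to0$ as well, the left side increases to $\mathcal H^s(f(E))$ and the right side increases to $L^s\mathcal H^s(E)$, both limits being monotone.

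There is no real obstacle here. The only points that need care are the conventions just mentioned for $s=0$ and $L=0$.
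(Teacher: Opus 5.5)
Your proof is correct and matches the paper's argument. You push a $\delta$-cover $\{U_i\}$ of $E$ forward under $f$, use $\diam f(U_i)\le L\,\diam U_i\le L\delta$ to get an admissible cover of $f(E)$ for $\mathcal H^s_{L\delta}$, take the infimum (your $\varepsilon$-near-optimal cover is equivalent), and then let $\delta\to0$. The separate treatment of $L=0$ is unnecessary, since the paper's definition of a Lipschitz map already requires $L>0$.
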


\begin{proof}
If $\{U_i\}$ is a countable cover of $E$ with $\diam \,U_i\le \delta$, then
$\{f(U_i)\}$ is a cover of $f(E)$ and
\[
    \diam_Y\bigl(f(U_i)\bigr)\ \le\  L\delta
    \qquad\text{for all }i.
\]

Hence $\{f(U_i)\}$ is an admissible cover, and therefore
\[
    \mathcal H^{s}_{L\delta}\bigl(f(E)\bigr)
    \le
    \sum_{i} \bigl(\diam_Y(f(U_i))\bigr)^s
    \le
    L^{s}\sum_{i} \bigl(\diam_X(U_i)\bigr)^s.
\]
Taking the infimum over all such $\delta$-covers $\{U_i\}$ of $E$ yields the first claim. Letting $\delta \to 0$ yields the second claim. 
\end{proof}

\subsection{Metric trees and branch separation}

A \emph{metric tree} is a compact, connected, and locally connected metric
space containing no simple closed curve. Equivalently, every two distinct
points $x,y\in T$ are the endpoints of a unique arc, which we denote by
$[x,y]_T$. A metric tree $T$ is \emph{geodesic} if each arc
$[x,y]_T$ is isometric to the interval $[0,d_T(x,y)]$. In particular, $\diam[x,y]_T =d_T(x,y)$.
If $u,v\in[x,z]_T$, then $d_T(u,v)=\bigl|d_T(x,u)-d_T(x,v)\bigr|$.

For $p\in T$, the connected components of $T\setminus\{p\}$ are called
the \emph{branches of $T$ at $p$}. We denote them by
\[
    B_1^T(p),B_2^T(p),\ldots
\]
and arrange them so that their diameters are nonincreasing. The valence of
$p$, denoted by $\val_T(p)$, is the number of these
components, and $\Val(T):=\sup_{p\in T}\val_T(p)$. A point $p$ is a \emph{branch point} if
$\val_T(p)\ge3$. Its height is
\[
    H_T(p):=\diam B_3^T(p).
\]
We write $\mathcal B(T)$ for the set of branch points of $T$.

We say that $T$ has \emph{uniformly relatively separated branch points}
with constant $c\in(0,1]$ if
\[
    d_T(p,q)
    \ge c\min\{H_T(p),H_T(q)\}
    \qquad\text{whenever }p,q\in\mathcal B(T),\ p\neq q.
\]
For $n\ge2$ and $c\in(0,1]$, let $\mathcal{GT}(n,c)$ be the class of
geodesic metric trees $T$ such that $\Val(T)\le n$ and the preceding separation inequality holds with constant $c$. Notice that if $0<c\le c_0$, then
\[
    \mathcal{GT}(n,c_0)\subseteq\mathcal{GT}(n,c).
\]

A metric space is \emph{doubling} if some $D\ge1$ has the property that
every ball is covered by at most $D$ balls of half its radius. A metric
tree is \emph{bounded turning} if some $C\ge1$ satisfies $\diam[x,y]_T\le C d_T(x,y)$ for all $x,y\in T$.

\begin{definition}[Quasiconformal tree]
A doubling, bounded-turning metric tree is called a \emph{quasiconformal
tree}. Every geodesic tree is $1$-bounded turning and a doubling geodesic tree is a quasiconformal tree.  
\end{definition}

Uniform branch separation also gives a quantitative doubling bound.  
\begin{remark}
We state the result using the convention $c\in(0,1]$ above.  The convention
in \cite{CGIV2024} uses a constant $C\ge1$ and writes $C^{-1}$ in the
separation inequality, so the two constants are related by $C=c^{-1}$. 
\end{remark}

\begin{lemma}[{\cite[Lemma~2.5]{CGIV2024}}]
\label{lem:doubling_from_branch_sep}
For every $n\ge2$ and $c\in(0,1]$, there exists $D=D(n,c)$ such that
every tree in $\mathcal{GT}(n,c)$ is $D$-doubling.  One may take
\[
    D(n,c)=3(n-2)\left(\frac{6}{c}+1\right)+7.
\]
\end{lemma}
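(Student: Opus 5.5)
The plan is a direct covering argument on branches. Fix $T\in\mathcal{GT}(n,c)$, a point $x\in T$ and a radius $r>0$, and write $B=B(x,r)$. I will cover $B$ by a number of balls of radius $r/2$ that is bounded only in terms of $n$ and $c$. The governing idea is that branch points inside $B$ with large height are $cr$-separated along arcs, so there are few of them, and the geodesic tree geometry away from them is essentially a union of arcs.

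\textbf{Step 1: the ``tall'' branch points.} Let $P$ be the set of branch points $p\in B(x,2r)$ with $H_T(p)\ge r/2$, say. By the separation hypothesis, distinct $p,q\in P$ satisfy $d_T(p,q)\ge cr/2$. To bound $\#P$ I will use the tree structure rather than doubling, which is what we are trying to prove. Consider the subtree $S$ spanned by $x$ and $P$, namely the union of the arcs $[x,p]_T$. It is a finite tree of radius $<2r$ about $x$. Every $p\in P$ lies in $S$ and has at least one branch of diameter $\ge r/2$ leaving it. I would count the leaves and branch points of $S$ using the following facts: the valence is at most $n$; each vertex of $P$ lies on a path from $x$ of length $<2r$; and consecutive elements of $P$ along any path from $x$ are at distance at least $cr/2$. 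Together these show that the number of elements of $P$ along any arc from $x$ is $O(1/c)$. Heights also force distinct ``heavy'' branches, so the number of leaves of $S$ should be controlled by $n$ times the depth. This yields $\#P\lesssim (n-2)/c$, matching the shape $3(n-2)(6/c+1)$ of the stated constant.

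\textbf{Step 2: cover the rest.} Remove from $B$ the points of $P$. Each component of $B\setminus P$ is a subtree in which every branch point has height $<r/2$, so it is thin away from a central arc. Each such component meets at most $n$ of the points of $P$, so the number of components is at most about $n\#P+1$. Inside a component, every point lies within distance about $r/2$ of a geodesic ``spine'' of length $\le 2r$. The spine is covered by boundedly many balls, and enlarging them handles the thin side branches; small constant bookkeeping may require radius $r/4$ pieces, giving a doubling constant bounded by a fixed multiple.

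\textbf{Step 3: collect the count.} The number of balls needed is bounded by $O(1)$ per arc segment between consecutive tall branch points, plus $O(1)$ per component. This gives a bound of the form $3(n-2)(6/c+1)+7$.

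The main obstacle is Step 1. The separation condition uses the minimum of the two heights, and heights are measured globally ($\diam B_3$), not inside the ball. The counting must therefore be arranged along arcs from $x$, using that the geodesic distance between branch points equals arc length. That arc-length identity is what turns the separation into a packing bound without circularity. Since this lemma is quoted from \cite[Lemma~2.5]{CGIV2024}, I would ultimately defer to that reference for the exact constants.
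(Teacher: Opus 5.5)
\paragraph{The gap is in Step 1.} Your argument fails at the sentence ``the number of leaves of $S$ should be controlled by $n$ times the depth.'' Bounded valence together with $O(1/c)$ tall branch points along each arc from $x$ only gives at most $(n-1)^{O(1/c)}$ leaves of $S$. A rooted tree of depth $k$ and valence at most $n$ can have $(n-1)^k$ leaves. The separation hypothesis does not prevent this, because it only constrains pairs of branch points relative to their heights, and in a bushy subtree all the heights are comparable.

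This cannot be patched. No count linear in $1/c$ is possible, and the explicit constant in the statement is itself false. Here is a counterexample.
\begin{itemize}
\item Take a complete binary tree of depth $K$ with root $o$ and all edges of length $\ell$. At each of its $2^K$ bottom vertices attach two segments of length $K\ell$. This is a geodesic tree of valence $3$.
\item A vertex at depth $1\le j<K$ has height $2(2K-j-1)\ell\le 4(K-1)\ell$, and a bottom vertex has height $K\ell$. Distinct branch points are at distance at least $\ell$. So the tree lies in $\mathcal{GT}(3,c)$ whenever $4c(K-1)\le 1$.
\item On one arm of each bottom vertex $u$, pick the point at distance $K\ell-\ell/2$ from $u$. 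This gives $2^K$ points of $B(o,2K\ell)$ with pairwise distances at least $2K\ell+\ell$. A ball of radius $K\ell$ contains at most one of them, so $N(B(o,2K\ell),K\ell)\ge 2^K$.
\item For $c=1/40$ and $K=11$, the separation condition holds with equality. At least $2048$ balls are needed, whereas $3(n-2)(6/c+1)+7=730$ for $n=3$.
\end{itemize}
So deferring to \cite{CGIV2024} for ``the exact constants'' does not rescue the proposal: the target bound is wrong.

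\paragraph{What your strategy can prove.} It gives the qualitative lemma with a constant exponential in $1/c$. That is all the paper uses: doubling of $T_a$, doubling of $M$ in the countability step, and the existence of a uniform Euclidean dimension.

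To get it, replace the leaf count in Step 1 as follows.
\begin{itemize}
\item Every branch point $q\neq x$ of $S$ has at least two branches pointing away from $x$ of diameter at least $r/2$. Each contains some $p\in P$ together with a large branch of $p$ that avoids $q$.
\item The branch of $q$ containing $x$ is also large unless every other branch point of $S$ lies beyond $q$. Hence at most one branch point of $S$ fails to lie in $P$.
\item Each arc from $x$ in $S$ therefore meets $O(1/c)$ branch points of $S$. So $S$ has at most $(n-1)^{O(1/c)}$ leaves, and $\#P\le n^{O(1/c)}$.
\end{itemize}

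Step 2 then goes through after lowering the height threshold, say to $r/8$. Take an almost-diametral pair $y,z$ of a component $U$ of $B\setminus P$. Every branch point of $U$ has at most two $U$-branches of diameter at least $r/8$. Hence $U$ lies in the $(r/8)$-neighbourhood of $[y,z]_T$, and spine balls of radius $3r/8$, once enlarged, still have radius $r/2$. The number of components, at most $(n-1)\#P+1$, is then also $n^{O(1/c)}$.
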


Next, we first record the countability fact noted in \cite[Chapter V, (1.3)(iv)]{Why65} and is used in the pigeonholing step of the main proof.

\begin{lemma}\label{lem:countable_branch_points}
The set of branch points of a quasiconformal tree is countable.
\end{lemma}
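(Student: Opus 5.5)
The plan is to inject branch points into a countable set of rational-type data using separability. Let $T$ be a quasiconformal tree; it is compact, so it has a countable dense subset $Q\subset T$. To each branch point $p\in\mathcal B(T)$ we assign a finite tuple of points of $Q$, and we show that distinct branch points receive distinct tuples. Since $Q$ is countable, so is the set of finite tuples from $Q$, and the lemma follows.

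The construction goes as follows. If $p$ is a branch point, then $T\setminus\{p\}$ has at least three components $B_1^T(p),B_2^T(p),B_3^T(p)$. Each component is nonempty and open in $T$, because $T$ is locally connected and components of open subsets of a locally connected space are open. Hence each $B_i^T(p)$ contains a point $q_i\in Q$. We assign to $p$ the triple $(q_1,q_2,q_3)$, choosing the three points in three distinct components.

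The next step is to recover $p$ from the triple. Since $q_1,q_2,q_3$ lie in distinct components of $T\setminus\{p\}$, the arcs $[q_i,q_j]_T$ all pass through $p$. So $p$ lies in $[q_1,q_2]_T\cap[q_2,q_3]_T\cap[q_1,q_3]_T$. In a tree, this triple intersection of arcs is a single point, namely the median (center) of the tripod spanned by $q_1,q_2,q_3$. Therefore $p$ is determined by $(q_1,q_2,q_3)$, the assignment is injective, and $\mathcal B(T)$ injects into $Q^3$, which is countable.

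The main point needing care is the claim that the three arcs meet in exactly one point. The plan is to argue by uniqueness of arcs. Let $m$ be the first point of $[q_1,q_2]_T$, starting from $q_1$, that lies on $[q_1,q_3]_T$. Then the arcs from $m$ to $q_2$ and from $m$ to $q_3$ meet only at $m$. Concatenating them gives an arc from $q_2$ to $q_3$, which must be $[q_2,q_3]_T$. From this the triple intersection is $\{m\}$. Local connectedness, used to make the components open, is the only other topological input. Doubling and bounded turning are not needed beyond guaranteeing compactness, hence separability.
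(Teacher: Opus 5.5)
Your proof is correct up to one misstated step. The first point of $[q_1,q_2]_T$, read from $q_1$, that lies on $[q_1,q_3]_T$ is $q_1$ itself, since both arcs start there, so that choice of $m$ yields nothing. You want the first point of $[q_1,q_2]_T$ read from $q_2$ (equivalently, the last one read from $q_1$) that lies on the closed set $[q_1,q_3]_T$.

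With this $m$, the subarc $[q_2,m]$ of $[q_1,q_2]_T$ meets $[q_1,q_3]_T$ only at $m$. Hence $[q_2,m]\cup[m,q_3]$ and $[q_2,m]\cup[m,q_1]$ are arcs, where $[m,q_1]$ and $[m,q_3]$ are the two subarcs of $[q_1,q_3]_T$. By uniqueness of arcs they equal $[q_2,q_3]_T$ and $[q_1,q_2]_T$, and intersecting with $[q_1,q_3]_T=[q_1,m]\cup[m,q_3]$ gives exactly $\{m\}$.

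You can also skip the median altogether. Suppose $p'\neq p$ received the same triple. Then $p'$ lies in a component of $T\setminus\{p\}$ that contains at most one $q_i$. For the other two points $q_j,q_k$, the arc $[q_j,q_k]_T$ is contained in $\{p\}$ together with the components of $q_j$ and $q_k$, so it misses $p'$. Yet it must pass through $p'$, because $q_j$ and $q_k$ lie in distinct components of $T\setminus\{p'\}$.

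With that repair, your route differs from the paper's mainly in that the paper gives no argument at all. It simply quotes the classical fact from Whyburn [Chapter V, (1.3)(iv)] that a dendrite has countably many branch points. Your argument is a self-contained proof of that fact, and it makes clear what it uses: separability, local connectedness (so that the components of $T\setminus\{p\}$ are open), and uniqueness of arcs.

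In particular, doubling, bounded turning, and the bounded valence that the paper invokes when applying the lemma to $M$ in Section 5 play no role. Also, compactness is already part of the paper's definition of a metric tree. So your closing remark overstates things slightly: doubling is not needed even to obtain separability.
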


The following elementary observation will be used repeatedly in the
synchronization argument.

\begin{lemma}\label{lem:tree_embedding_arcs_heights}
Let $T$ and $M$ be metric trees, and let $f:T\to M$ be an embedding.
Then $ f([x,y]_T)=[f(x),f(y)]_M$ for all $x,y\in T$. In particular, $f(\mathcal B(T))\subseteq\mathcal B(M)$. If, in addition,
$f$ is $(1/K,L)$-bi-Lipschitz, then $ H_M(f(p))\ge 1/K H_T(p)$ for every $p\in\mathcal B(T)$.
\end{lemma}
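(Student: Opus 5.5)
Three claims need proof: arcs map to arcs, branch points map to branch points, and heights satisfy $H_M(f(p))\ge \frac1K H_T(p)$. We take them in that order; each follows from uniqueness of arcs in trees together with the compactness of $T$.

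\emph{Arcs.} Take $x\neq y$ in $T$. The arc $[x,y]_T$ is homeomorphic to $[0,1]$ with endpoints $x,y$. Since $T$ is compact and $f$ is an injective continuous map, $f$ restricted to $[x,y]_T$ is a homeomorphism onto its image. Hence $f([x,y]_T)$ is an arc in $M$ with endpoints $f(x),f(y)$. Arcs between two points of $M$ are unique, so $f([x,y]_T)=[f(x),f(y)]_M$.

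\emph{Branch points.} Let $p\in\mathcal B(T)$ and choose points $x_1,x_2,x_3$ in three distinct branches $B_1^T(p),B_2^T(p),B_3^T(p)$. The arcs $[p,x_i]_T$ meet pairwise only at $p$. Their images $[f(p),f(x_i)]_M$ are therefore three arcs from $f(p)$ that meet pairwise only at $f(p)$, by injectivity. The sets $f([p,x_i]_T)\setminus\{f(p)\}$ are connected, so each lies in a single component of $M\setminus\{f(p)\}$. Suppose two of them, for $i\neq j$, lay in the same component. That component is an open connected subset of a locally connected continuum, hence arcwise connected. So there would be an arc from $f(x_i)$ to $f(x_j)$ avoiding $f(p)$. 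But the unique arc from $f(x_i)$ to $f(x_j)$ is the image $f([x_i,x_j]_T)$, and $[x_i,x_j]_T$ passes through $p$ because $x_i,x_j$ lie in different branches at $p$. This is a contradiction. Hence $\val_M(f(p))\ge3$, and $f(p)\in\mathcal B(M)$.

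\emph{Heights.} Keep the images of the three branches at $p$. By the previous step, $f(B_1^T(p)),f(B_2^T(p)),f(B_3^T(p))$ lie in three distinct components $C_1,C_2,C_3$ of $M\setminus\{f(p)\}$; indeed the same argument with arbitrary points in distinct branches shows that distinct branches map into distinct components. The map $f$ is $(1/K,L)$-bi-Lipschitz, so $\diam f(E)\ge \frac1K\diam E$ for every $E\subset T$. This gives
\[
\diam C_i\ \ge\ \frac1K \diam B_i^T(p)\ \ge\ \frac1K H_T(p)\qquad (i=1,2,3),
\]
where the last inequality holds because the branches are ordered by nonincreasing diameter. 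So $M\setminus\{f(p)\}$ has at least three components of diameter at least $\frac1K H_T(p)$. The third largest component diameter is then at least this value, that is, $H_M(f(p))\ge \frac1K H_T(p)$.

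The only delicate point is the use of arcwise connectedness of components of $M\setminus\{f(p)\}$. It holds because $M$ is a locally connected continuum, so its open connected subsets are arcwise connected.
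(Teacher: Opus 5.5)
Your proof is correct and follows the paper's argument essentially step for step: arcs map onto the unique target arcs, distinct branches at $p$ land in distinct components of $M\setminus\{f(p)\}$ because otherwise the target arc between their images would avoid $f(p)$, and the lower Lipschitz bound transfers the three branch diameters. You additionally justify the step the paper leaves implicit, namely that a component of $M\setminus\{f(p)\}$ is arcwise connected because it is an open connected subset of a Peano continuum.
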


\begin{proof}
The restriction of $f$ to $[x,y]_T$ is a homeomorphism onto an arc in
$M$ joining $f(x)$ to $f(y)$. The uniqueness of arcs in $M$ gives the first claim.

Fix $p\in\mathcal B(T)$. Distinct branches of $T$ at $p$ are mapped into distinct branches of $M$ at $f(p)$: otherwise, the unique target arc
joining points from two such images would avoid $f(p)$, whereas the first
part shows that it is the image of a source arc passing through $p$.
Thus, $f(p)$ is a branch point. Furthermore, the image of each source
branch has diameter at least $K^{-1}$ times the diameter of that branch.
At least three branches of $M$ at $f(p)$ therefore have diameter at least
$K^{-1}H_T(p)$, which proves the height estimate.
\end{proof}

We next define the gluing operation used in the construction of the
examples.  

Let $X$ and $Y_i$, $i\in I$, be geodesic metric spaces,
and choose points $x_i\in X$ and $y_i\in Y_i$.  Their
\emph{geodesic gluing}
\[
    (X,(x_i)_{i\in I})\bigvee_{i\in I}(Y_i,y_i)
\]
is the quotient of the disjoint union of $X$ and the $Y_i$ obtained by
identifying $y_i$ with $x_i$.  It is equipped with the path metric that
restricts to the original metric on each piece.  In particular, if
$z\in Y_i$, $w\in Y_j$, and $i\neq j$, then
\[
    d(z,w)
    =d_{Y_i}(z,y_i)+d_X(x_i,x_j)+d_{Y_j}(y_j,w),
\]
while, for $z\in Y_i$ and $w\in X$,
\[
    d(z,w)=d_{Y_i}(z,y_i)+d_X(x_i,w).
\]

\begin{lemma}[Lemma 11.3, \cite{CGIV2024}]\label{lem:glued_geodesic_metric}
Let $I$ be a countable infinite subset of $\N$. Let $X$ and $Y_i$ be geodesic metric trees for all $i \in I$ and let $x_i\in X, y_i \in Y$. Assume that either 
$\operatorname{card} I < \infty$, or $\operatorname{card} I = \infty$ and 
$\diam Y_i \to 0$ as $i \to \infty$. Then the geodesic gluing $
(X, d_X, x_i) \bigvee_{i\in I} (Y_i, d_{Y_i}, y_i)$
is a geodesic metric tree.
\end{lemma}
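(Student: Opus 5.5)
The plan is to verify directly that the glued space $Z:=(X,(x_i))\bigvee_{i\in I}(Y_i,y_i)$ satisfies each part of the definition of a geodesic metric tree: the given formulas define a metric, $Z$ is geodesic, compact, connected, locally connected, and contains no simple closed curve. We may assume each $Y_i$ meets the rest of $Z$ only at the single glued point $y_i\sim x_i$. Several indices may share the same point $x_i$; this causes no difficulty.

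\emph{Metric and geodesic property.} First I check that the displayed formulas, together with $d_X$ on $X$ and $d_{Y_i}$ on each $Y_i$, define a metric. Positivity and symmetry are immediate. For the triangle inequality I would split into cases according to which pieces the three points lie in. Every case reduces to the triangle inequality in $X$, plus the observation that any path between different pieces must pass through the glue points. Next, a geodesic between $z\in Y_i$ and $w\in Y_j$ with $i\ne j$ is obtained by concatenating three geodesics: $[z,y_i]$ in $Y_i$, then $[x_i,x_j]$ in $X$, then $[y_j,w]$ in $Y_j$. The length of this concatenation equals $d(z,w)$ by the formula. The cases $z\in Y_i$, $w\in X$ and $z,w$ in a common piece are handled the same way. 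In particular $Z$ is path connected, hence connected. It is also locally connected, because every open ball in a geodesic space is path connected.

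\emph{Compactness.} This is the step I expect to be the main obstacle. If $I$ is finite, $Z$ is a finite union of compact sets, since each piece embeds isometrically. If $I$ is infinite, take a sequence $(z_k)$ in $Z$. If infinitely many $z_k$ lie in $X$ or in a single $Y_i$, compactness of that piece gives a convergent subsequence. Otherwise, after passing to a subsequence, $z_k\in Y_{i_k}$ with $i_k\to\infty$. Then
\[
d(z_k,x_{i_k})\le \diam Y_{i_k}\to 0 .
\]
By compactness of $X$, a further subsequence of $(x_{i_k})$ converges to some $x\in X$, and so $z_k\to x$. This is exactly where the hypothesis $\diam Y_i\to0$ is used.

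\emph{No simple closed curve.} Suppose $\gamma$ is a simple closed curve in $Z$. If $\gamma$ lies in one piece, this contradicts that the piece is a tree. Otherwise $\gamma$ meets $Y_i\setminus\{y_i\}$ and also $Z\setminus Y_i$ for some $i$. Removing the point $y_i$ disconnects $Y_i\setminus\{y_i\}$ from $Z\setminus Y_i$, because by construction every path between the two sets passes through $y_i$. A simple closed curve minus one point is still connected, so it cannot be separated in this way; hence $\gamma$ meets both sets in $Z\setminus\{y_i\}$, which is impossible. The argument with $X$ in place of $Y_i$ is symmetric: $\gamma$ would have to pass through each glue point it uses, and removing one such point disconnects $\gamma$. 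With compactness, connectedness and local connectedness, uniqueness of arcs follows. Each unique arc is the concatenated geodesic built above, so it is isometric to an interval, and $Z$ is a geodesic metric tree.
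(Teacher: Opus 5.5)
Your proof is correct. The paper gives no proof of this lemma; it quotes it from \cite{CGIV2024}. So your proposal is a self-contained verification rather than a variant of an in-text argument. You also read the statement as intended: $y_i\in Y_i$, and $I$ may be finite or infinite.

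Checking the definition directly is the natural route. You verify the displayed formula is a metric, concatenate geodesics across glue points, use path-connected balls for local connectedness, prove sequential compactness, and rule out simple closed curves via cut points. This makes clear that the hypothesis $\operatorname{diam} Y_i\to 0$ enters only in the compactness step. It is genuinely needed there: infinitely many unit segments glued at one point give a non-compact space.

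One step deserves to be stated topologically rather than in terms of paths. $Z\setminus\{y_i\}$ is the disjoint union of $Y_i\setminus\{y_i\}$ and $Z\setminus Y_i$, and both are open in $Z\setminus\{y_i\}$:
\begin{itemize}
\item The first is open because, by the distance formulas, $B(z,d(z,y_i))\subset Y_i$ for every $z\in Y_i$.
\item The second is open because $Y_i$ is compact, hence closed.
\end{itemize}
The connected set $\gamma\setminus\{y_i\}$ therefore cannot meet both.

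Your closing remark about $X$ is redundant. Your case split already shows that any simple closed curve not contained in a single piece meets both $Y_i\setminus\{y_i\}$ and $Z\setminus Y_i$ for some $i$.
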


\subsection{Metric dimensions}

We use Hausdorff, Nagata, Lipschitz, and Assouad dimension, denoted by
\[
    \dim_H X,\qquad \dim_N X,\qquad \dim_L X,\qquad \adim  X,
\]
respectively.  We refer to \cite{book:hdim:rogers} for Hausdorff dimension, to
\cite{LangSchlichenmaier2005} for Nagata
dimension, to \cite{David:lip_dim21} for Lipschitz dimension, and to
\cite{fraser:ass} for Assouad dimension.

For clarity, we recall the two dimension facts needed below.  The Nagata
dimension is monotone under passage to subsets, and $\dim_N X\le\dim_L X$; see \cite[Corollary~3.5]{David:lip_dim21}. 

For a metric space $X$ and for $B\subset X$, let $N_X(B,r)$ represent the number of balls of radius $r$ required to cover $B$. The Assouad dimension of $X$ is the infimum of all $s\ge0$ for which there exists $C\ge1$ such that
\[
    N_X(B_X(x,R),r)
    \le C\left(\frac Rr\right)^s
\]
for every $x\in X$ and $0<r<R$ where $B_X(x,R)$ is a ball of radius $R$ centered at $x$.

\begin{theorem}[Theorem D, \cite{FreemanGartland:lip_qc_trees23}]\label{thm:freeman_gartland_lip}
    The Lipschitz dimension of any quasiconformal tree $T$ is equal to 1. Consequently, $\dim_N T =1$.
\end{theorem}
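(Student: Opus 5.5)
The statement has two parts. For the equality $\dim_L T=1$, the lower bound is formal; all the work is a matching upper bound, for which I would construct a Lipschitz light map $f\colon T\to\R$. I do not intend to reduce to the geodesic case, since a quasisymmetric uniformization in the sense of Bonk and Meyer does not preserve Lipschitz dimension. The Nagata statement then follows from $\dim_N T\le\dim_L T$ \cite[Corollary~3.5]{David:lip_dim21} together with the same lower bound.

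\emph{Lower bound.} Assume $T$ has more than one point; otherwise the statement is degenerate. Then $T$ contains an arc, so its topological dimension is at least $1$. Nagata dimension dominates topological dimension, so $1\le\dim_N T\le\dim_L T$. It remains to show $\dim_L T\le 1$.

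\emph{Model case: geodesic trees.} Fix a root $x_0$ and put $f(x)=d(x_0,x)$, which is $1$-Lipschitz. Let $I=[t,t+r]$ and let $E$ be a component of $f^{-1}(I)$. Connected subsets of a dendrite are arcwise connected, so $[x,y]\subset E$ for all $x,y\in E$. The meet point $p$ of $x$ and $y$ toward $x_0$ lies on $[x,y]$, hence $f(p)\ge t$. By additivity of length along $[x_0,x]$ we get $d(x,y)=d(x,p)+d(p,y)\le 2r$. So $f$ is Lipschitz light.

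\emph{First attempt in general.} For a $C$-bounded turning tree I would first try the substitute $h(x)=\diam[x_0,x]$. It is $C$-Lipschitz, because $[x_0,x]\subset[x_0,y]\cup[y,x]$, and it is monotone along arcs issuing from $x_0$. The same meet-point argument reduces lightness to the estimate
\[
    \diam[p,x]\lesssim h(x)-h(p)\qquad\text{for } p\in[x_0,x].
\]
This fails in general: an arc can wander a long way while $\diam[x_0,\cdot]$ barely increases. So diameter along arcs is too crude, and $h$ has to be replaced by a scale-sensitive ``length''.

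\emph{Multiscale construction.} I would build $f$ as follows. Using doubling and bounded turning, construct for each $k$ a nested decomposition of $T$ into subtrees (tiles) of diameter comparable to $2^{-k}$, obtained by cutting at finitely many points. Each tile should meet a uniformly bounded number of others, and the tiles should form a rooted tree of tiles, each level-$k$ tile having a unique entrance point nearest $x_0$.

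Define $f$ recursively. On a level-$k$ tile $Q$ with entrance $q$, the function $f$ increases from $f(q)$ by a total amount comparable to $2^{-k}$. The increase is distributed among the level-$(k+1)$ subtiles along each arc from $q$ so that every subtile receives an increment comparable to $2^{-k-1}$. This is where bounded turning enters: an arc of diameter about $2^{-k}$ crosses only boundedly many subtiles.

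The Lipschitz bound comes from summing a geometric series across scales. Lightness follows the geodesic argument with $f$ in place of $d(x_0,\cdot)$: a component of $f^{-1}(I)$ with $|I|\approx 2^{-k}$ cannot cross more than a bounded number of level-$k$ tiles monotonically away from the meet point. Hence it lies in boundedly many adjacent tiles, and its diameter is at most a constant times $2^{-k}$.

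\emph{Main obstacle.} The difficult step is making the recursive increments consistent: the value of $f$ at a cut point must agree from both sides, while each subtile still receives a uniformly comparable increment at every scale, so that the accumulated oscillation on a tile stays comparable to its diameter. I would first try to fix this by letting the increment on a subtile depend only on its combinatorial depth within its parent tile, which the bounded-overlap property keeps uniformly bounded, and then check that the resulting series of increments defines a single continuous function on $T$.
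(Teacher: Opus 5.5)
For the general statement your proposal has a genuine gap, and it is not the bookkeeping issue you flag. The scheme itself cannot work once the tree has non-rectifiable arcs. (The paper does not prove this statement; it quotes Theorem~D of Freeman--Gartland.)

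\emph{What is fine.} Your lower bound is correct. The geodesic model case is also correct after one repair. Lipschitz lightness in the Cheeger--Kleiner sense controls the $\operatorname{diam}(W)$-chain components of $f^{-1}(W)$, not its topological components. Your argument adapts: if $z,z'\in f^{-1}([t,t+r])$ with $d(z,z')\le r$, their meet point $p$ satisfies $f(p)\ge t-r/2$. Hence all points of an $r$-chain share the ancestor at level $t-r$ (or lie within $2r$ of $x_0$ when $t<r$), and the chain has diameter at most $4r$. The paper only applies the statement to the geodesic trees $T_a$, so this already covers its use.

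\emph{Where it fails.} You require $f$ to be nondecreasing along arcs from $x_0$, with every level-$k$ tile crossed by such an arc contributing at least $c\,2^{-k}$. This lower bound is exactly what your lightness argument uses. Suppose $[x_0,x]$ crosses $N_k$ level-$k$ tiles. Then $c\,2^{-k}N_k\le f(x)-f(x_0)\le L\operatorname{diam}T$. These tiles, plus at most two partial ones, cover the arc, so $\mathcal H^1([x_0,x])\lesssim\liminf_k 2^{-k}N_k<\infty$. Thus every arc would be rectifiable.

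That fails, for example, for the von Koch arc, or for the snowflaked interval $([0,1],|x-y|^{1/2})$, which is a doubling, $1$-bounded turning arc. On the latter, no monotone map is Lipschitz light:
\begin{itemize}
\item the preimage of $[f(u),f(u)+r]$ is an interval, so it lies in a single $r$-component;
\item lightness therefore forces $f(u+h)-f(u)\gtrsim h^{1/2}$;
\item summing over $N$ consecutive intervals of length $1/N$ gives $f(1)-f(0)\gtrsim N^{1/2}$, contradicting $f(1)-f(0)\le L$.
\end{itemize}

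So your diagnosis of why $\operatorname{diam}[x_0,\cdot]$ fails is off. It is not that diameter is too crude: no ``scale-sensitive length'' that increases along arcs from $x_0$ can be Lipschitz light. A Lipschitz light map on a general quasiconformal tree must fold, i.e.\ be non-monotone along arcs, at every scale. Constructing such a folding map is the actual content of the cited theorem, and your proposal does not supply it.
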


We finish with a covering lemma that combines an Assouad-dimension estimate
on a closed subset with uniform estimates at Whitney scales away from that
subset. The ideas of such decomposition have also been utilized to study conditions under which a metric space bi-Lipschitz embeds into a Euclidean space; see \cite{Jee11}.

\begin{lemma}\label{lem:whitney_scale_assouad}
Let $Y$ be a doubling metric space, let $X\subset Y$ be a nonempty
closed subset, and suppose that $\adim  X\le d$.  Assume that there is a
constant $C_0\ge1$ such that
\[
    N_Y(B_Y(y,R),r)
    \le C_0\left(\frac Rr\right)^d
\]
whenever $y\in Y\setminus X$, $0<R<\frac{\dist(y,X)}2$ and $0<r<R$.
Then $\adim  Y\le d$.
\end{lemma}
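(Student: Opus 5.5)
We need to bound the covering numbers $N_Y(B_Y(y,R),r)$ by $C(R/r)^{s}$ for every $s>d$; actually we aim for exponent exactly $d$, up to constants coming from the doubling constant $D$ of $Y$ and from $\adim X\le d$ (and we will settle for every $s>d$ if needed, since the conclusion is only $\adim Y\le d$). Fix $s>d$ and $C_X$ with $N_X(B_X(x,\rho),t)\le C_X(\rho/t)^s$ for $x\in X$ and $0<t<\rho$. Note that covering numbers of subsets of $X$ computed with balls of $X$ or of $Y$ differ only by changing the radius by a factor $2$, which costs a constant by doubling. Fix $y\in Y$ and $0<r<R$, and set $\delta=\dist(y,X)$.

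\emph{Case 1: $R<\delta/2$.} This case is exactly the hypothesis, giving $N_Y(B_Y(y,R),r)\le C_0(R/r)^d$.

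\emph{Case 2: $R\ge \delta/2$.} Choose $x_0\in X$ with $d(y,x_0)\le 2\delta\le 4R$, so that $B_Y(y,R)\subset B_Y(x_0,5R)$. We perform a Whitney decomposition of $B:=B_Y(x_0,5R)$. Let $B_0=\{z\in B:\dist(z,X)<4r\}$, and for $k\ge1$ with $2^{k}r\le 10R$ let $A_k=\{z\in B:2^{k+1}r\le\dist(z,X)<2^{k+2}r\}$. Every $z\in B$ has $\dist(z,X)\le 5R$, so the $A_k$ together with $B_0$ cover $B$, and there are $O(\log(R/r))$ of them.
\begin{enumerate}
\item To cover $B_0$, cover $X\cap B_Y(x_0,10R)$ by $\lesssim C_X(R/r)^s$ balls of radius $r$ centered in $X$. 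Their $4r$-enlargements cover $B_0$, and by doubling each enlargement is covered by $D^{3}$ balls of radius $r$.
\item To cover $A_k$, cover $X\cap B_Y(x_0,10R)$ by $\lesssim C_X(R/(2^kr))^s$ balls of radius $2^{k}r$ centered in $X$. The $2^{k+3}r$-enlargements cover $A_k$. Each point $z\in A_k$ lies in a ball $B_Y(z,\rho)$ with $\rho=2^{k}r<\dist(z,X)/2$. Hence each enlarged ball $B_Y(x,2^{k+3}r)$ meets $A_k$ in a set covered by $D^{O(1)}$ balls $B_Y(z_j,2^{k}r)$ with $z_j\in A_k$. By hypothesis, each of these is covered by $C_0 2^{kd}$ balls of radius $r$.
\end{enumerate}
Summing, we obtain
\[
N_Y(B_Y(y,R),r)\lesssim (R/r)^s+\sum_{k\ge1} C_X D^{O(1)}C_0 (R/r)^s 2^{-ks}2^{kd}.
\]
Since $s>d$, the series converges geometrically, and we get $N_Y(B_Y(y,R),r)\lesssim (R/r)^s$.

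Combining the two cases gives $\adim Y\le s$ for every $s>d$, hence $\adim Y\le d$.

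\emph{Main obstacle.} The delicate point is Step 2 in Case 2: ensuring that the pieces of the Whitney annulus $A_k$ are covered by balls centered off $X$ with radius below $\dist(\cdot,X)/2$, so that the hypothesis applies. One must also check the bookkeeping of converting $X$-ball covers to $Y$-ball covers via doubling, together with the geometric summation. If $d$ were required with exponent exactly $d$, the sum would produce a logarithmic factor; working with $s>d$ avoids this. We also need to handle the small edge case $2^{k}r$ comparable to $R$, which is absorbed by doubling.
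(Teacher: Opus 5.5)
Your proposal is correct and is essentially the paper's argument: the same case split, the same dyadic layers by distance to $X$, covering each layer using the Assouad bound on $X$ at that layer's scale, refining by doubling to balls centered in the layer, applying the local hypothesis, and summing a geometric series using $s>d$. One small slip to fix: on $A_k$ you only know $\dist(z,X)\ge 2^{k+1}r$, so $\rho=2^k r$ gives $\rho\le\dist(z,X)/2$ rather than the strict inequality the hypothesis requires; take a slightly smaller radius, as the paper does with $r_k/4$ against $\dist\ge r_k$, which costs one more doubling constant.
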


\begin{proof}
Fix $s>d$.  We prove an Assouad covering estimate with exponent $s$.
Let $y\in Y$ and $0<r<R$.  We first consider the case $y\in Y\setminus X$ and
\[
    R<\frac{\dist(y,X)}2,
\]
then the hypothesis away from $X$ gives
\[
    N_Y(B_Y(y,R),r)
    \le C_0\left(\frac Rr\right)^d
    \le C_0\left(\frac Rr\right)^s.
\]
Thus, in this case we are done.

\noindent
It remains to consider the case in which $y\in X$, or
$R\ge\dist(y,X)/2$.  We may then choose $y_0\in X$ such
that
\[
    d_Y(y,y_0)<3R.
\]
Consequently,
\[
    B_Y(y,R)\subseteq B_Y(y_0,4R).
\]

Set
\[
    E_0
    :=B_Y(y,R)\cap
      \{z\in Y:\dist(z,X)<8r\}.
\]
For $k\ge1$, set
\[
    r_k:=2^{k+2}r
\]
and
\[
    E_k
    :=B_Y(y,R)\cap
      \{z\in Y:r_k\le\dist(z,X)<2r_k\}.
\]
Then
\[
    B_Y(y,R)\subseteq E_0\cup\bigcup_{k\ge1}E_k.
\]
If $E_k\neq\varnothing$, then $r_k<4R$; hence only finitely many of
these layers are nonempty.

Since $\adim  X\le d<s$, there exists $C_s\ge1$ such that
\[
    N_Y(X\cap B_Y(y_0,12R),\rho)
    \le C_s\left(\frac{12R}{\rho}\right)^s
\]
for every $0<\rho<12R$.

For $z\in E_0$, choose $q_z\in X$ with $d_Y(z,q_z)<8r$.  The relevant
points $q_z$ lie in $X\cap B_Y(y_0,12R)$.  Covering this portion of
$X$ by radius-$r$ balls and enlarging them shows that $E_0$ is
covered by at most
\[
    C_s\left(\frac{12R}{r}\right)^s
\]
balls of radius $9r$.  Doubling each of these balls down to radius $r$
gives
\[
    N_Y(E_0,r)\le C_1\left(\frac Rr\right)^s,
\]
where $C_1$ depends only on $s$, $C_s$, and the doubling constant of
$Y$.

Now fix $k\ge1$ with $E_k\neq\varnothing$.  For every $z\in E_k$,
choose $q_z\in X$ such that $d_Y(z,q_z)<2r_k$.  Again,
$q_z\in X\cap B_Y(y_0,12R)$.  Covering this portion of $X$ by
radius-$r_k$ balls shows that $E_k$ is covered by at most
\[
    C_s\left(\frac{12R}{r_k}\right)^s
\]
balls of radius $3r_k$.  By doubling, these may be replaced by at most
\[
    C_2\left(\frac R{r_k}\right)^s
\]
balls of radius $r_k/8$, where $C_2$ is independent of $k$.

Retain only the balls that meet $E_k$, and choose a point
$z_i\in E_k$ in each retained ball.  The intersection of such a ball
with $E_k$ is contained in $B_Y(z_i,r_k/4)$.  Since
\[
    \frac{r_k}{4}
    <\frac{\dist(z_i,X)}2,
\]
the local covering hypothesis yields
\[
    N_Y(B_Y(z_i,r_k/4),r)
    \le C_0\left(\frac{r_k}{4r}\right)^d.
\]
It follows that
\[
    N_Y(E_k,r)
    \le C_3
       \left(\frac Rr\right)^s
       2^{-(s-d)k}.
\]

Summing the estimates for $E_0,E_1,E_2,\ldots$ gives
\[
    N_Y(B_Y(y,R),r)
    \le C_4\left(\frac Rr\right)^s.
\]
Since this holds for every $s>d$, we conclude that
$\adim  Y\le d$.
\end{proof}

\begin{remark}
    This lemma appears useful in other contexts. For instance, one can apply it to compute Assouad dimension of Grushin plane is $2$. We leave the details to an interested reader.
\end{remark}
\section{Lipschitz maps of intervals}\label{S:intervals_lip_maps}
The trees constructed in Section~\ref{S:T_a} are governed by an
$a$-dependent iterated subdivision of intervals. In this section, we
establish two estimates needed to compare these trees under
bi-Lipschitz embeddings. First, at sufficiently large subdivision
levels, all but a set of small relative measure is covered by
subintervals having the exponential length determined
by $a$. Second, if a Lipschitz map nearly maximally stretches the
endpoints of an interval, then, outside a set of small relative
measure, every subdivision interval containing a given point is
likewise nearly maximally stretched.

\subsection{The subdivision rule}
Fix $a\in [1/4, 1/3]$.

\begin{definition}[$a$-subdivision of $I$]\label{def:subdivision_rule}
    Let $I=[s,t]\subset\R$ with $s<t$. Define a family of subdivisions $\{\mathcal D_n^a(I)\}_{n\ge0}$ recursively as follows.
    Let $\mathcal D_0^a(I)=\{I\}$, and given $\mathcal D_n^a(I)$, define
    \[
    \mathcal D_{n+1}^a(I)
    =
    \bigcup_{J\in\mathcal D_n^a(I)}
    \Bigl\{
    [s_J,s_J+a|J|],\,
    [s_J+a|J|,t_J-a|J|],\,
    [t_J-a|J|,t_J]
    \Bigr\}.
    \]

\end{definition}
By induction, $\mathcal D_n^a(I)$ consists of $3^n$ closed
subintervals whose union is $I$ and whose interiors are pairwise
disjoint. Each $J\in\mathcal D_n^a(I)$ has length $|J|=|I|a^k(1-2a)^{n-k}$
for some $k\in\{0,\ldots,n\}$, where $k$ is the number of times the
subdivision path leading to $J$ chooses a side interval.

Moreover, every child has length at most one half the length of its
parent. Consequently, $\max\{|J|:J\in\mathcal D_n^a(I)\}\le 2^{-n}|I|$. In particular, the union of all subdivision endpoints is dense in $I$.

\subsection{Typical length of subintervals}
Define $\alpha_a := 2a\ln a + (1-2a)\ln(1-2a)$.
The function $a\mapsto\alpha_a$ is strictly decreasing on
$[1/4,1/3]$.

\begin{lemma}[typical length of subintervals]\label{lem:lawoflargenum}
For every $\tau,\eta>0$, there exists $N\in\N$ such that for all $n\ge N$,
    \[
    \mathcal L\,\left(
    \bigcup
    \Bigl\{
    J\in\mathcal D_n^a(I):
    \frac{|J|}{|I|}\ge e^{(\alpha+\tau)n}
    \ \text{or}\
    \frac{|J|}{|I|}\le e^{(\alpha-\tau)n}
    \Bigr\}
    \right)
    \le \eta |I|.
    \]
\end{lemma}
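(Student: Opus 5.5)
The plan is to recast the statement as a weak law of large numbers for a binomial distribution. The first step is to record the exact distribution of lengths in $\mathcal D_n^a(I)$. Encode each $J\in\mathcal D_n^a(I)$ by its word in $\{L,M,R\}^n$: at each generation the path chooses the left side, middle or right side child. By Definition~\ref{def:subdivision_rule}, an interval whose word contains exactly $k$ side letters has
\[
|J|=|I|\,a^k(1-2a)^{n-k},
\]
and there are exactly $\binom nk 2^k$ such words. Since the intervals have pairwise disjoint interiors, the union of those with $k$ side letters has Lebesgue measure
\[
|I|\binom nk (2a)^k(1-2a)^{n-k}.
\]
So the normalized measure $\mathcal L/|I|$ pushes forward to the $\mathrm{Bin}(n,2a)$ law on $k$.

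Next I translate the length conditions into conditions on $k$. Write $\frac1n\ln(|J|/|I|)=\frac kn\ln a+\bigl(1-\frac kn\bigr)\ln(1-2a)$, and note that
\[
\alpha_a=2a\ln a+(1-2a)\ln(1-2a).
\]
Subtracting gives
\[
\frac1n\ln\frac{|J|}{|I|}-\alpha_a=\Bigl(\frac kn-2a\Bigr)\bigl(\ln a-\ln(1-2a)\bigr).
\]
If $a=1/3$, the bracket vanishes: every interval has length $3^{-n}|I|=e^{\alpha_a n}|I|$, so the bad set is empty for every $n$ and there is nothing to prove. If $a\in[1/4,1/3)$, set $\beta:=|\ln a-\ln(1-2a)|>0$. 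Then $J$ is bad, meaning $|J|/|I|\ge e^{(\alpha+\tau)n}$ or $|J|/|I|\le e^{(\alpha-\tau)n}$, exactly when
\[
|k-2an|\ge \tau n/\beta.
\]

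Finally I apply Chebyshev's inequality to $\mathrm{Bin}(n,2a)$, whose variance is $2a(1-2a)n$. This gives
\[
\mathcal L(\text{bad set})\le |I|\,\frac{2a(1-2a)n}{(\tau n/\beta)^2}=|I|\,\frac{2a(1-2a)\beta^2}{\tau^2 n}.
\]
Choosing $N\ge 2a(1-2a)\beta^2/(\tau^2\eta)$ makes this at most $\eta|I|$ for all $n\ge N$.

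I expect the main obstacle to be only bookkeeping: checking carefully the count $\binom nk 2^k$ and that the union has the stated measure, which uses the disjoint interiors. The degenerate case $a=1/3$ must also be separated, since there the length is not an injective function of $k$. No genuinely hard step is anticipated; $N$ may depend on $a$, which the statement allows.
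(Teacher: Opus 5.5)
Your proof is correct and is essentially the paper's argument. The paper treats the side/middle choices at each generation as i.i.d.\ random variables $X_k$ under normalized Lebesgue measure and invokes the weak law of large numbers; you make the same idea explicit by showing that the number of side letters is $\mathrm{Bin}(n,2a)$-distributed and applying Chebyshev, which also verifies the independence the paper merely asserts and yields an explicit $N$. Your separate treatment of $a=1/3$ is needed only because you divide by $\beta$. In that case the bad set is empty only for $n\ge1$, so take $N\ge1$; in the paper's formulation this case is just a constant sequence.
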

\begin{proof}
    For each $x\in I$, let $J_n(x)\in\mathcal D_n^a(I)$ be an interval at level $n$
    containing $x$. This interval is unique except for a finite collection of points. We equip $I$ with the normalized Lebesgue probability measure. Since $\mathcal D_n^a(I)$ is a partition of $I$, we have
    \[
        \mathbb P(J_n(x)=J)=\frac{|J|}{|I|}.
    \]
    At each subdivision step, a point $x$ lies in one of the two side intervals with total
    probability $2a$, and in the middle interval with probability $1-2a$.
    Define random variables $X_1,\dots,X_n:I\to\R$ by
    \[
        X_k(x)=
        \begin{cases}
        \ln a, & \text{if }x\text{ lies in a side interval at level }k,\\
        \ln(1-2a), & \text{if }x\text{ lies in the middle interval at level }k.
        \end{cases}
    \]
    Then the variables $X_k$ are independent and identically distributed, with
    \[
        \mathbb E[X_k]=2a\ln a+(1-2a)\ln(1-2a)=\alpha.
    \]
    By construction,
    \[
        |J_n(x)|=|I|\prod_{k=1}^n e^{X_k(x)},
    \]
    and therefore
    \[
    \ln\frac{|J_n(x)|}{|I|}=\sum_{k=1}^n X_k(x).
    \]
    Let
    \[
    \overline X_n(x)=\frac{1}{n}\sum_{k=1}^n X_k(x).
    \]
    By the weak law of large numbers, for every $\tau>0$,
    \[
        \mathbb P\,\left(
        \left\{x\in I:\bigl|\overline X_n(x)-\alpha\bigr|\ge\tau\right\}
        \right)
        \xrightarrow[n\to\infty]{}0.
    \]
    Equivalently,
    \[
        \frac{1}{|I|}
        \mathcal L\,\left(
        \left\{x\in I:\bigl|\overline X_n(x)-\alpha\bigr|\ge\tau\right\}
        \right)
        \xrightarrow[n\to\infty]{}0.
    \]
    Thus, for every $\eta>0$, there exists $N$ such that for all $n\ge N$,
    \[
        \mathcal L\,\left(
        \left\{x\in I:\bigl|\overline X_n(x)-\alpha\bigr|\ge\tau\right\}
        \right)
        \le \eta |I|.
    \]
    Finally, observe that
    \[
    \left\{x\in I:
    \frac{|J_n(x)|}{|I|}
    \notin
    \bigl[e^{(\alpha-\tau)n},e^{(\alpha+\tau)n}\bigr]
    \right\}
    =
    \left\{x\in I:
    \bigl|\overline X_n(x)-\alpha\bigr|\ge\tau
    \right\}.
    \]
    The set on the left is a disjoint union of those intervals
    $J\in\mathcal D_n^a(I)$ whose lengths lie outside the stated range.
    This yields the desired estimate.
\end{proof}

%\begin{remark}
%    If quantitative bounds are required, the weak law of large numbers may be replaced by Hoeffding's inequality.
%\end{remark}

\subsection{Lipschitz maps and almost maximally stretched intervals} 
Throughout this subsection, when we say that a map $f$ is $L$-Lipschitz, the constant $L>0$ is not necessarily the smallest possible Lipschitz constant unless explicitly specified.

\begin{lemma}\label{lem:shrinkingsmall}
    For every $\eta,\delta>0$ there exists $\varepsilon>0$ such that the following holds.
    
    Let $(M,d)$ be a metric space, let $I=[s_I,t_I]\subset\R$, where $s_I<t_I$, let $L>0$, and let $f:I\to M$ be $L$-Lipschitz. Assume that
    \[
        d\bigl(f(s_I),f(t_I)\bigr)
        \ge L(1-\varepsilon)|I|.
    \]
    Define
    \[
    B_{\delta,a}
    :=
    \Bigl\{x\in I:\ \exists\, n\ge 0,\ \exists\,J\in\mathcal D_n^a(I)\ \text{with }x\in J
    \ \text{and}\ d\bigl(f(s_J),f(t_J)\bigr)\le L(1-\delta)\,|J|\Bigr\}.
    \]
    Then $\mathcal L(B_{\delta,a})\le \eta |I|$.
\end{lemma}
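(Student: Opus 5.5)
The idea is a defect-budget argument. Choose $\varepsilon:=\eta\delta$. Let $\mathcal F$ be the family of all subdivision intervals $J\in\bigcup_n\mathcal D_n^a(I)$ with $d(f(s_J),f(t_J))\le L(1-\delta)|J|$. Then $B_{\delta,a}=\bigcup\mathcal F$.

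Any two intervals in $\bigcup_n\mathcal D_n^a(I)$ are either nested or have disjoint interiors. Hence we may pass to the maximal elements $\mathcal F^{\max}$ of $\mathcal F$ under inclusion. Every element of $\mathcal F$ lies in some maximal one, because the chain of ancestors of a given interval is finite. The maximal elements have pairwise disjoint interiors, $\bigcup\mathcal F^{\max}=B_{\delta,a}$, and $\mathcal F^{\max}$ is countable. So it suffices to show that for every finite subfamily $J_1,\dots,J_m\in\mathcal F^{\max}$ we have $\sum_i|J_i|\le\eta|I|$; monotone convergence then bounds the total.

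For the finite family, let $n$ be the maximal level among the $J_i$. Refine $I$ into the partition $\mathcal P$ consisting of $J_1,\dots,J_m$ together with the intervals of $\mathcal D_n^a(I)$ not contained in any $J_i$. Since the $J_i$ have disjoint interiors and are unions of level-$n$ intervals, $\mathcal P$ is a finite partition of $I$ into consecutive closed intervals $P_1,\dots,P_N$ ordered left to right. Telescoping with the triangle inequality gives
\[
L(1-\varepsilon)|I|\le d(f(s_I),f(t_I))\le\sum_{P\in\mathcal P}d(f(s_P),f(t_P)).
\]
We bound each term of the sum. For $P\notin\{J_i\}$, the Lipschitz bound gives $d(f(s_P),f(t_P))\le L|P|$. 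For $P=J_i$, membership in $\mathcal F$ gives $d(f(s_P),f(t_P))\le L(1-\delta)|J_i|$. Summing,
\[
\sum_{P\in\mathcal P}d(f(s_P),f(t_P))\le L|I|-L\delta\sum_i|J_i|.
\]
Combining the two displays yields $\delta\sum_i|J_i|\le\varepsilon|I|=\eta\delta|I|$, that is, $\sum_i|J_i|\le\eta|I|$, which is what we needed.

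The only delicate point is the combinatorial step: the maximality reduction and the construction of a genuine consecutive partition, so that the telescoping applies. This is where the nested structure of Definition~\ref{def:subdivision_rule} is used. Note also that $\varepsilon$ depends only on $\eta$ and $\delta$, and not on $a$, $L$, $M$ or $I$, as the statement requires.
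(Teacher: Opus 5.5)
Your proposal is correct and follows the paper's proof essentially step for step: the same choice $\varepsilon=\eta\delta$, the same reduction to maximal bad intervals with disjoint interiors, and the same telescoping triangle inequality over a finite subfamily and the complementary pieces, giving $\delta\sum_i|J_i|\le\varepsilon|I|$. Your explicit level-$n$ refinement of the complementary gaps and your remark that maximal elements exist because ancestor chains are finite are slightly more careful than the paper's write-up, but the argument is the same.
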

\begin{proof}
    Let $\mathcal F$ denote the family of all intervals
    \[
    J\in\bigcup_{n\ge0}\mathcal D_n^a(I)
    \quad\text{such that}\quad
    d\bigl(f(s_J),f(t_J)\bigr)\le L(1-\delta)\,|J|.
    \]
    Then $B_{\delta,a}=\bigcup_{J\in\mathcal F} J$.
    
    Choose a subcollection $\{J_i\}_{i\in\mathcal I}\subset\mathcal F$ consisting of
    maximal elements with respect to inclusion.
    Since the collections $\mathcal D_n^a(I)$ form nested partitions of $I$,
    any two distinct maximal intervals $J_i$ and $J_j$ are disjoint up to end points. Hence
    \[
    \mathcal L(B_{\delta,a})=\sum_{i\in\mathcal I}|J_i|.
    \]

    Let $\mathcal I_0 \subset \mathcal I$ be a finite set. Order the intervals
    $\{J_i\}_{i\in\mathcal I_0}$ from left to right. Traversing $I$ from $s_I$
    to $t_I$ and applying the triangle inequality across these intervals and
    the complementary gaps gives
    \[
    d\bigl(f(s_I),f(t_I)\bigr)
    \le
    \sum_{i\in\mathcal I_0}
    d\bigl(f(s_{J_i}),f(t_{J_i})\bigr)
    +
    L\left(|I|-\sum_{i\in\mathcal I_0}|J_i|\right).
    \]

    Since $d\bigl(f(s_{J_i}),f(t_{J_i})\bigr) \le L(1-\delta)|J_i|$ and $d\bigl(f(s_I),f(t_I)\bigr)\ \ge\ L(1-\varepsilon)\,|I|$, we obtain
    \[
        L(1-\varepsilon) |I|\,\le\,d\bigl(f(s_I),f(t_I)\bigr)
        \,\le\,
        L|I|-L\delta\sum_{i\in\mathcal I_0}|J_i|
    \]
    and therefore
    \[
        \sum_{i\in\mathcal I_0}|J_i|
        \le
        \frac{\varepsilon}{\delta}|I|.
    \]
    Since this holds for every finite subcollection $\mathcal I_0\subset\mathcal I$,
    we get
    \[
        \sum_{i\in\mathcal I}|J_i|
        \le
        \frac{\varepsilon}{\delta}|I|.
    \]
    Thus $\mathcal L(B_{\delta,a}) \le \frac{\varepsilon}{\delta}|I$. Choosing $\varepsilon :=\eta\delta$ yields $\mathcal L(B_{\delta,a})\,\le\, \eta |I|$, the proof is complete.
    \end{proof}
In particular, if $x\in I\setminus B_{\delta,a}$, then every
$J\in\mathcal D_n^a(I)$, at every level $n\ge0$, that contains $x$
satisfies
\[
d\bigl(f(s_J),f(t_J)\bigr)>L(1-\delta)|J|.
\]

\begin{comment}
    
\begin{remark} % Remove this remark
    The conclusion of Lemma~\ref{lem:shrinkingsmall} holds for any nested family of subintervals of $I$; the dependence on the $a$-subdivision will become relevant only when combined with Lemma~\ref{lem:lawoflargenum}.
\end{remark}
\end{comment}

\section{A family of geodesic trees}\label{S:T_a}
In this section, we introduce a one-parameter family of geodesic trees $(T_a)_{a\in[1/4,1/3]}$ that will serve as the main objects in the proof of Theorem~\ref{thm:mainthm1}. The trees are obtained by an iterated subdivision of edges following the same ternary rule introduced
in Section~\ref{S:intervals_lip_maps}. This construction produces a nested
structure of branch points whose geometry is sensitive to the choice of 
parameter $a$.

\subsection{Construction of tree $\mathbf{T_a}$}

Fix $a\in [1/4,1/3]$. Let $c \in (0, 1)$ be fixed.
The restriction $a\in[1/4,1/3]$ ensures that the three subintervals produced at each step are uniformly comparable in length: if an interval $I$ is subdivided into pieces of lengths
\[
a|I|,\qquad (1-2a)|I|,\qquad a|I|,
\]
then
\[
a \le 1-2a \le 2a.
\]
Hence each child interval has length comparable to $a|I|$, with constants independent of the stage of the construction. This balanced subdivision will be used later when comparing heights and distances in the tree.

We construct $T_a$ iteratively. Begin with a line segment
\[
I_0=[s_{I_0},t_{I_0}]
\]
of length $1$. At each endpoint $s_{I_0}$ and $t_{I_0}$, attach two line segments of length $c|I_0|=c$. The resulting set $T_0^a$ consists of five line segments, two branch points, and four leaves, as shown in Figure~\ref{fig:T_0}.
\begin{figure}[h]
                \centering
        
                \tikzstyle arrowstyle=[scale=1]
                    \tikzstyle arrowtipinmiddle=[postaction={decorate,decoration={markings,mark=at position .56 with {\arrow[arrowstyle]{stealth'}}}}]
                    % Define parameters
                    \begin{tikzpicture}[scale=0.7]
                    
                    \node[fill=black, circle, inner sep=1.5pt, label=below:\footnotesize{$s_{I_0}$}] (x) at (0,5) {};
                    \node[fill=black, circle, inner sep=1.5pt, label=below:\footnotesize{$t_{I_0}$}] (y) at (10,5) {};
                    % \node[fill=black, circle, inner sep=1.5pt, label=below:\footnotesize{$p$}] (p) at (3.30,5) {};
                    % \node[fill=black, circle, inner sep=1.5pt, label=below:\footnotesize{$q$}] (q) at (6.8,5) {};
                   
                    \draw[fill=none, inner sep = 0.05cm] (-1.5,5.5)  node (w1) {};
                    \draw[fill=none, inner sep = 0.05cm] (-1.5,4.5)  node (w2) {};
            
                    \draw[fill=none, inner sep = 0.05cm] (11.5,5.5)  node (u1) {};
                    \draw[fill=none, inner sep = 0.05cm] (11.5,4.5)  node (u2) {};
                    
                    \draw[line width=0.02500cm, color=springgreen] (0,5) to (-1.5, 4.5);

                    \draw[line width=0.02500cm, color=springgreen, bend left=0] (x) to (w1);
                    \draw[ line width=0.02500cm, color=springgreen, bend left=0] (x) to (w1);
                    \draw[ line width=0.03500cm, color=royalblue, bend left=0] (y) to (x);

                    \draw[black!90, <->, dotted] (0,4.3) -- (10,4.3);
                    \node[fill=none, gray, circle, inner sep=1.5pt, label=below:\footnotesize{\textcolor{gray}{$1$}}] (l) at (5,4.4) {};
                    
                   % Right arcs
                    \draw[ line width=0.02500cm, color=springgreen, bend left=0] (y) to (u1);
                    \draw[ line width=0.02500cm, color=springgreen, bend left=0] (y) to (u2);

                    % \draw[line width=0.02500cm, color=springgreen] (3.3,5) to (3.3, 5.5);
                    % \draw[line width=0.02500cm, color=springgreen] (6.8,5) to (6.8, 5.5);

                    % \node[fill=none, circle, inner sep=1.5pt, label=below:\textcolor{orange}{\footnotesize{$a$}}] () at (1.7,6) {};

                    % \node[fill=none, circle, inner sep=1.5pt, label=below:\textcolor{orange}{\footnotesize{$(1-2a)$}}] () at (5,6) {};

                    % \node[fill=none, circle, inner sep=1.5pt, label=below:\textcolor{orange}{\footnotesize{$a$}}] () at (8.3,6) {};

                    \end{tikzpicture}

                \caption{$T_0^a$}
                \label{fig:T_0}
            \end{figure}
Next, divide $I_0$ into three subintervals
\[
I_{01}=[s_{I_{01}},t_{I_{01}}],\qquad
I_{02}=[s_{I_{02}},t_{I_{02}}],\qquad
I_{03}=[s_{I_{03}},t_{I_{03}}],
\]
of lengths $a|I_0|$, $(1-2a)|I_0|$, and $a|I_0|$, respectively, with
\[
t_{I_{01}}=s_{I_{02}},\qquad t_{I_{02}}=s_{I_{03}}.
\]
At each subdivision point $t_{I_{01}}$ and $t_{I_{02}}$, attach one
line segment of length $
c\,a\,|I_0|$. This produces the set $T_1^a$, shown in Figure~\ref{fig:T_1}.
  \begin{figure}
    \centering
    \tikzstyle arrowstyle=[scale=1]
        \tikzstyle arrowtipinmiddle=[postaction={decorate,decoration={markings,mark=at position .56 with {\arrow[arrowstyle]{stealth'}}}}]
        % Define parameters
        \begin{tikzpicture}[scale=0.7]
        
        \node[fill=black, circle, inner sep=1.5pt, label=below:\footnotesize{$x$}] (x) at (0,5) {};
        \node[fill=black, circle, inner sep=1.5pt, label=below:\footnotesize{$y$}] (y) at (10,5) {};
        \node[fill=black, circle, inner sep=1.5pt, label=below:\footnotesize{$p$}] (p) at (3.30,5) {};
        \node[fill=black, circle, inner sep=1.5pt, label=below:\footnotesize{$q$}] (q) at (6.8,5) {};
       
        \draw[fill=none, inner sep = 0.05cm] (-1.5,5.5)  node (w1) {};
        \draw[fill=none, inner sep = 0.05cm] (-1.5,4.5)  node (w2) {};

        \draw[fill=none, inner sep = 0.05cm] (11.5,5.5)  node (u1) {};
        \draw[fill=none, inner sep = 0.05cm] (11.5,4.5)  node (u2) {};
        
        \draw[line width=0.02500cm, color=springgreen] (0,5) to (-1.5, 4.5);

        \draw[line width=0.02500cm, color=springgreen, bend left=0] (x) to (w1);
        \draw[ line width=0.02500cm, color=springgreen, bend left=0] (x) to (w1);
        \draw[ line width=0.03500cm, color=royalblue, bend left=0] (y) to (x);
       
        \draw[ line width=0.02500cm, color=springgreen, bend left=0] (y) to (u1);
        \draw[ line width=0.02500cm, color=springgreen, bend left=0] (y) to (u2);

        \draw[line width=0.02500cm, color=springgreen] (3.3,5) to (3.3, 5.5);
        \draw[line width=0.02500cm, color=springgreen] (6.8,5) to (6.8, 5.5);

        \node[fill=none, circle, inner sep=1.5pt, label=below:\textcolor{orange}{\footnotesize{$a~L$}}] () at (1.7,6) {};

        \node[fill=none, circle, inner sep=1.5pt, label=below:\textcolor{orange}{\footnotesize{$(1-2a)~L$}}] () at (5,6) {};

        \node[fill=none, circle, inner sep=1.5pt, label=below:\textcolor{orange}{\footnotesize{$a~L$}}] () at (8.3,6) {};
        \end{tikzpicture}
    \caption{$T_1^a$}
    \label{fig:T_1}
\end{figure}
We continue inductively: at each stage, every distinguished interval $I$ is subdivided into three subintervals of lengths $a|I|$, $(1-2a)|I|$, and $a|I|$, and one new edge of length $c a |I|$ is attached at each of the two interior subdivision points. Note that only the distinguished intervals are subdivided in this process; the newly attached edges are never subdivided.

The sets $T_k^a$ form an increasing sequence, $T_0^a \subset T_1^a \subset T_2^a \subset \cdots$, and we define the tree
\begin{equation}
    T_a := \bigcup_{k=0}^{\infty} T_k^a,
\end{equation}
equipped with the induced path metric.

\begin{proposition}
    For every $a\in [1/4, 1/3]$, the space $T_a$, equipped with its path metric, is a complete geodesic metric tree. Moreover, $\Val(T_a)=3$, its branch points are dense in $I_0$, and, for all distinct $p,q\in \mathcal{B}(T_a)$,
    \begin{equation}\label{eq:unifrom-sep}
        d_{T_a}(p,q) \ge \frac{1}{c} \min \{H_{T_a}(p), H_{T_a}(q)\}.
    \end{equation}
    Consequently, $T_a$ is a quasiconformal tree and $T_a \in \mathcal{GT}(\bfn, \bfc)$ for every $n\ge 3$. 
\end{proposition}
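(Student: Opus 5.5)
The plan is to realize $T_a$ directly as a geodesic gluing, so that Lemma~\ref{lem:glued_geodesic_metric} gives the tree structure. The distinguished interval is always a subinterval of $I_0$, and every attached edge is a segment that is never subdivided. Hence
\[
T_a=(I_0,(x_i)_{i\in I})\bigvee_{i\in I}([0,\ell_i],0),
\]
where the $x_i$ run over the two endpoints of $I_0$ (each used twice, with $\ell_i=c$) and over the interior subdivision points $t_{J_1},t_{J_2}$ of every $J\in\mathcal D_n^a(I_0)$, $n\ge0$, with $\ell_i=ca|J|$.

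At level $n$ only $2\cdot3^n$ edges are added, and $|J|\le2^{-n}$, so after enumerating level by level we have $\ell_i\to0$. Lemma~\ref{lem:glued_geodesic_metric} then says $T_a$ is a (compact, hence complete) geodesic metric tree. I would also check that the induced path metric on $\bigcup_k T_k^a$ agrees with the gluing metric, and that $I_0$ together with the edges is all of the space. Compactness also ensures no extra limit points are needed, since the new edges shrink and accumulate only onto $I_0$.

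For the valence, note that distinct subdivision points are distinct points of $I_0$, because the partitions are nested and the new points are interior to their parent. Thus every point of $I_0$ carries at most one attached edge, except $s_{I_0},t_{I_0}$, which carry two edges but have only one direction along $I_0$. So every point has valence at most $3$, and the branch points are exactly the subdivision points together with the two endpoints; in particular $\Val(T_a)=3$. Density of branch points in $I_0$ follows from $\max_{J\in\mathcal D_n^a}|J|\le2^{-n}$.

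Heights are computed next.
\begin{itemize}
\item At a subdivision point $p$ created inside $J$, the three branches are: the attached edge, of diameter $ca|J|$; and the two sides along $I_0$, each containing a child of $J$ of length at least $a|J|>ca|J|$. Hence $H_{T_a}(p)=ca|J|$.
\item At the endpoints of $I_0$ the height is $c$, since the two attached edges have length $c$ and the $I_0$ branch is longer.
\end{itemize}

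The main step is the separation inequality \eqref{eq:unifrom-sep}, and I expect the combinatorics of comparing creation levels to be the only delicate point. Take distinct branch points $p,q$, and let $q$ be the one created at the later (or equal) level $m$, inside some $J'\in\mathcal D_{m}^a(I_0)$. Then $p$ already belongs to the endpoint set of $\mathcal D_m^a(I_0)$, so $p\notin\operatorname{int}J'$.

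Since $q$ is one of the two interior subdivision points of $J'$, its distance to either endpoint of $J'$ is at least $\min\{a,1-a\}|J'|=a|J'|$, using $a\le1-2a$. If $p$ and $q$ are created in the same $J'$, their distance is $(1-2a)|J'|\ge a|J'|$. Either way,
\[
d_{T_a}(p,q)\ge a|J'|=\frac{H_{T_a}(q)}{c}\ge\frac1c\min\{H_{T_a}(p),H_{T_a}(q)\},
\]
and the endpoints of $I_0$ are handled by the same argument with level $-1$. The same bound gives separation with constant $c$ (indeed $1/c\ge c$), so $T_a\in\mathcal{GT}(3,c)\subseteq\mathcal{GT}(\bfn,\bfc)$. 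Lemma~\ref{lem:doubling_from_branch_sep} then makes $T_a$ doubling, and being geodesic it is $1$-bounded turning, hence a quasiconformal tree.
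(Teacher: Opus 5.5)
Your proof is correct and is essentially the paper's argument: the gluing lemma gives the geodesic tree structure, the shrinking subdivision gives density, and the separation rests on the same fact that a branch point created inside $J$ lies at distance at least $a|J|$ from every other branch point created no later than it. You anchor the estimate at the later-created point, obtaining $d_{T_a}(p,q)\ge H_{T_a}(q)/c$ directly, whereas the paper anchors at the point of smaller height and argues by contradiction; one small slip is that when $p$ and $q$ are created at the same level in different intervals, $p$ is not yet an endpoint of $\mathcal D_m^a(I_0)$, although $p\notin\operatorname{int}J'$ still holds because distinct level-$m$ intervals have disjoint interiors.
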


\begin{proof}
    Let $\mathcal{A}$ be the collection of all edges attached during the construction of $T_q$. Each element of $\mathcal{A}$ is isometric to a compact interval and is attached to a point of $I_0$. For $Y\in\mathcal{A}$, let $m(Y)$ denote the stage at which $Y$ is attached. Since only finitely many edges are attached at each stage, we may enumerate $\mathcal{A}=\{Y_i\}_{i\in I}$, where $I\subset\N$, so that $m(Y_i)\le m(Y_j)$ whenever $i<j$. Since $\diam Y_i\to 0$ as $i\to \infty$, Lemma~\ref{lem:glued_geodesic_metric} implies that $T_a$ is a geodesic metric tree and hence, it is also complete. Furthermore, by construction, valency of each branch point is $3$.
    
    We next show that the branch points are dense in $I_0$. Let $x\in I_0$ and let $U$ be any open neighborhood of $x$. Since the distinguished intervals are subdivided at every stage and their lengths tend to $0$, there exists a stage $N$ and a distinguished interval $I\subset U$ containing $x$ such that $|I|$ is arbitrarily small. At the next stage, two new branch points are created at the subdivision points of $I$. Since these points lie in $I\subset U$, the neighborhood $U$ contains a branch point.

    Finally, the tree $T_a$ has uniform branch separation with constant $\bfc$. We choose the gluing parameter in the construction of $T_a$ to be $c := \bfc$. Let $p$ and $q$ be two distinct branch points of $T_a$. Without loss of generality, assume that $
    H_T(p)\le H_T(q).$ Then it suffices to show that $$ d(p,q)\ge \bfc\,H_T(p).$$

    First suppose that $p$ is an endpoint of $I_0$. Then $H_{T_a}(p)=c$. Every non-endpoint branch point has height at most $ca<c$. Hence the assumption $H_{T_a}(p)\le H_{T_a}(q)$ forces $q$ to be the other endpoint of $I_0$. This yields $d(p,q)=1\ge \bfc H_{T_a}(p)$.

    Now suppose that $p$ is created by subdividing a unique interval $J$. Then $H_T(p) = \bfc a |J|$. We claim that
    \[
        d_{T_a}(p,q)\ge a|J|.
    \]

Indeed, both children of $J$ adjacent to $p$ have length at least
$a|J|$, and the other subdivision point of $J$ is at distance
$(1-2a)|J|\ge a|J|$ from $p$. Therefore, if
$d_{T_a}(p,q)<a|J|$, then $q$ must be a branch point created later
inside a proper descendant $K$ of one of the children adjacent to
$p$. In that case,
\[
    H_{T_a}(q)=ca|K|<ca|J|=H_{T_a}(p),
\]
contrary to our choice of $p$. Thus
\[
    d_{T_a}(p,q)
    \ge a|J|
    =\frac{1}{c}H_{T_a}(p),
\]
which proves \eqref{eq:unifrom-sep}. In particular,
\[
    d_{T_a}(p,q)
    \ge c\min\{H_{T_a}(p),H_{T_a}(q)\},
\]
so the branch points are uniformly relatively separated with constant
$c$. Lemma~\ref{lem:doubling_from_branch_sep} therefore implies that
$T_a$ is doubling. Since every geodesic tree is $1$-bounded turning,
$T_a$ is a quasiconformal tree. In particular, $T_a$ belongs to the class $\mathcal{GT}(n,\bfc)$ where $n\ge 3$.
\end{proof}

\begin{proposition}
For every $a\in[1/4,1/3]$, the tree $T_a$ admits a bi-Lipschitz
embedding into $\R^2$. Moreover,
\[
    \dim_H(T_a)=\dim_N(T_a)=\adim (T_a)=1.
\]
\end{proposition}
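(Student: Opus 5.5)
I would prove the two claims separately, starting with the explicit planar embedding and then reading the dimension statements off from it.

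\emph{Planar embedding.} Place $I_0$ on the $x$-axis as $[0,1]\times\{0\}$. At each interior subdivision point $p$ of a distinguished interval $J$, attach the edge of length $ca|J|$ as a vertical segment. Its direction is chosen alternately: the edge at $s_{J}+a|J|$ points up and the edge at $t_J-a|J|$ points down. At the endpoints of $I_0$, attach the four edges of length $c$ as segments making angle $\pm\pi/4$ with the axis, pointing outward.

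The images of distinct edges must not meet. Two vertical segments at points $p\neq q$ lie on different vertical lines, so they are disjoint. Let $F$ denote this map. It is $1$-Lipschitz because it is an isometry on each edge and on $I_0$, and $d_{T_a}$ is the path metric.

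The main work is the lower bound $|F(z)-F(w)|\ge \lambda\, d_{T_a}(z,w)$. Take $z$ on an edge $Y_p$ at $p$ and $w$ on an edge $Y_q$ at $q$. Then
\[
d_{T_a}(z,w)=h_z+|p-q|+h_w,
\]
where $h_z,h_w$ are the heights of $z,w$ above their base points. If both segments point the same way, the Euclidean distance is at least $\max\{|p-q|,|h_z-h_w|\}$. This does not control $h_z+h_w$ when both heights are large compared with $|p-q|$. Here the separation estimate from the proof of the previous proposition is needed. Write $p$ for the point with $H(p)\le H(q)$, so that $|p-q|\ge a|J_p|=H(p)/c\ge h_z/c$. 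Hence $h_z\le c|p-q|$, and therefore
\[
h_z+|p-q|+h_w\le (1+2c)|p-q|+|h_w-h_z|,
\]
which is bounded by a constant times the Euclidean distance. The remaining cases are handled the same way. When the segments point in opposite directions, the vertical distance alone is $h_z+h_w$. When one point lies on $I_0$, the argument is easier. The slanted end edges only need a crude estimate, since $a\le 1/3$ gives a definite gap between them and the interior edges, which have length at most $ca<c$.

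\emph{Dimensions.} Since $T_a$ is a quasiconformal tree, Theorem~\ref{thm:freeman_gartland_lip} gives $\dim_N T_a=1$. The tree contains a segment, so $1\le\dim_H T_a\le\adim T_a$. For the Hausdorff dimension alone, $T_a$ is a countable union of segments and hence has $\sigma$-finite $\mathcal H^1$. This gives $\dim_H T_a=1$ directly.

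For $\adim T_a\le 1$, I would apply Lemma~\ref{lem:whitney_scale_assouad} with $Y=T_a$ and $X=I_0$. The set $I_0$ is closed and isometric to an interval, so $\adim I_0=1$. Now take $y\notin I_0$. It lies on an edge $Y_p$ at height $h=\dist(y,I_0)$, and we consider a radius $R<h/2$. The ball $B(y,R)$ stays inside $Y_p$, which is a segment, so $N(B(y,R),r)\le 2R/r+1$. This is the required Whitney-scale bound with $d=1$, and the lemma then yields $\adim T_a\le 1$.

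The step I expect to be hardest is the lower-Lipschitz bound for the planar embedding, specifically the case of two tall same-direction edges at nearby base points. That case relies entirely on the relative separation inequality $h\le c\,|p-q|$.
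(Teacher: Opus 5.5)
Your proposal is correct and follows essentially the paper's proof. Both use a planar embedding with vertical attached edges; the paper sends every interior edge upward and the two edges at each endpoint of $I_0$ straight up and down, where you alternate directions and slant the end edges. Both use the separation bound $h\le c|p-q|$ to get the constant $2+2c$ in the same-direction case. For the dimensions, both use $\sigma$-finiteness of $\mathcal H^1$, Freeman--Gartland, and Lemma~\ref{lem:whitney_scale_assouad} with $X=I_0$; the doubling of $T_a$ that this lemma requires is available from the previous proposition.

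One small correction concerns your slanted end edges. They are not separated from the interior edges by a ``definite gap'', since interior edges accumulate at the endpoints of $I_0$. Instead, they are controlled by the same separation inequality. An interior edge at $q$ created from $J_q$ has length $ca|J_q|$, while $\min\{q,1-q\}\ge a|J_q|$, so its length is at most $c\min\{q,1-q\}$. Combined with the horizontal displacement of the slanted edge, this gives the lower bound with constant $\sqrt2(1+c)$.
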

\begin{proof}
Identify $I_0$ isometrically with $[0,1]\times\{0\}$. Map every edge
attached at a non-endpoint branch point isometrically onto the vertical
segment lying above its attachment point. At each endpoint of $I_0$,
map one of the two attached edges upward and the other downward. This
defines an injective map
\[
    F:T_a\longrightarrow \R^2.
\]
Because $F$ maps each edge isometrically to a line segment, the Euclidean
distance between any two points is at most the length of the image of the geodesic joining them. Hence
\[
    |F(x)-F(y)|\le d_{T_a}(x,y)
\]
for all $x,y\in T_a$.

We prove the reverse inequality. It is immediate if $x$ and $y$ lie on the same edge. If one point lies on $I_0$ and the other lies on an attached edge, then, for suitable $\alpha,\beta\ge0$,
\[
    d_{T_a}(x,y)=\alpha+\beta
    \le\sqrt2\,(\alpha^2+\beta^2)^{1/2}
    =\sqrt2\,|F(x)-F(y)|.
\]

Suppose now that $x$ and $y$ lie on distinct attached edges based at
$p,q\in I_0$, respectively. This means $p\neq q$. Now set
\[
    \alpha=d_{T_a}(x,p),\qquad
    \gamma=d_{T_a}(y,q),\qquad
    \beta=d_{T_a}(p,q)
\]
as shown in Figure~\ref{fig:biLip}.
\begin{figure}[h]
    \centering
    \tikzset{every picture/.style={line width=0.75pt}} %set default line width to 0.75pt       
    \begin{tikzpicture}[x=0.75pt,y=0.75pt,yscale=-1,xscale=1]
    %uncomment if require: \path (0,832); %set diagram left start at 0, and has height of 832
    
    %Straight Lines [id:da974074339740122] 
    \draw [color={rgb, 255:red, 67; green, 104; blue, 152 }  ,draw opacity=1 ]   (221.83,705.3) -- (398.83,706.3) ;
    \draw [shift={(398.83,706.3)}, rotate = 0.32] [color={rgb, 255:red, 67; green, 104; blue, 152 }  ,draw opacity=1 ][fill={rgb, 255:red, 67; green, 104; blue, 152 }  ,fill opacity=1 ][line width=0.75]      (0, 0) circle [x radius= 3.35, y radius= 3.35]   ;
    \draw [shift={(221.83,705.3)}, rotate = 0.32] [color={rgb, 255:red, 67; green, 104; blue, 152 }  ,draw opacity=1 ][fill={rgb, 255:red, 67; green, 104; blue, 152 }  ,fill opacity=1 ][line width=0.75]      (0, 0) circle [x radius= 3.35, y radius= 3.35]   ;
    %Straight Lines [id:da4837302029328334] 
    \draw    (398.83,706.3) -- (484.83,705.3) ;
    %Straight Lines [id:da6284501021953278] 
    \draw [color={rgb, 255:red, 67; green, 104; blue, 152 }  ,draw opacity=1 ][line width=0.75]    (222.58,576.25) -- (221.83,705.3) -- (221.58,633.75) ;
    \draw [shift={(221.58,633.75)}, rotate = 269.8] [color={rgb, 255:red, 67; green, 104; blue, 152 }  ,draw opacity=1 ][fill={rgb, 255:red, 67; green, 104; blue, 152 }  ,fill opacity=1 ][line width=0.75]      (0, 0) circle [x radius= 3.35, y radius= 3.35]   ;
    %Straight Lines [id:da3472013889325124] 
    \draw [color={rgb, 255:red, 245; green, 166; blue, 35 }  ,draw opacity=1 ]   (320.58,645.25) -- (319.83,708.3) ;
    %Straight Lines [id:da052602299411180886] 
    \draw [color={rgb, 255:red, 67; green, 104; blue, 152 }  ,draw opacity=1 ][line width=0.75]    (398.83,625.3) -- (398.83,706.3) -- (398.83,665.8) ;
    \draw [shift={(398.83,665.8)}, rotate = 270] [color={rgb, 255:red, 67; green, 104; blue, 152 }  ,draw opacity=1 ][fill={rgb, 255:red, 67; green, 104; blue, 152 }  ,fill opacity=1 ][line width=0.75]      (0, 0) circle [x radius= 3.35, y radius= 3.35]   ;
    %Straight Lines [id:da606968543071433] 
    \draw [color={rgb, 255:red, 245; green, 166; blue, 35 }  ,draw opacity=1 ]   (269.58,689.25) -- (269.83,706.3) ;
    %Straight Lines [id:da8481083612661293] 
    \draw [color={rgb, 255:red, 245; green, 166; blue, 35 }  ,draw opacity=1 ]   (115.83,623.3) -- (115.83,704.3) ;
    %Straight Lines [id:da26261046719143366] 
    \draw  [dash pattern={on 0.84pt off 2.51pt}]  (208.98,637.5) -- (208.6,700.75) ;
    \draw [shift={(208.58,703.75)}, rotate = 270.34] [fill={rgb, 255:red, 0; green, 0; blue, 0 }  ][line width=0.08]  [draw opacity=0] (8.93,-4.29) -- (0,0) -- (8.93,4.29) -- cycle    ;
    \draw [shift={(209,634.5)}, rotate = 90.34] [fill={rgb, 255:red, 0; green, 0; blue, 0 }  ][line width=0.08]  [draw opacity=0] (8.93,-4.29) -- (0,0) -- (8.93,4.29) -- cycle    ;
    %Straight Lines [id:da5221741681815085] 
    \draw  [dash pattern={on 0.84pt off 2.51pt}]  (408.58,665.75) -- (408.58,702.75) ;
    \draw [shift={(408.58,705.75)}, rotate = 270] [fill={rgb, 255:red, 0; green, 0; blue, 0 }  ][line width=0.08]  [draw opacity=0] (8.93,-4.29) -- (0,0) -- (8.93,4.29) -- cycle    ;
    \draw [shift={(408.58,662.75)}, rotate = 90] [fill={rgb, 255:red, 0; green, 0; blue, 0 }  ][line width=0.08]  [draw opacity=0] (8.93,-4.29) -- (0,0) -- (8.93,4.29) -- cycle    ;
    %Straight Lines [id:da33682412887792756] 
    \draw [color={rgb, 255:red, 245; green, 166; blue, 35 }  ,draw opacity=1 ]   (300.58,689.25) -- (300.83,706.3) ;
    %Straight Lines [id:da6272231165456011] 
    \draw  [dash pattern={on 0.84pt off 2.51pt}]  (240,721.5) -- (390.58,721.75) ;
    \draw [shift={(393.58,721.75)}, rotate = 180.09] [fill={rgb, 255:red, 0; green, 0; blue, 0 }  ][line width=0.08]  [draw opacity=0] (8.93,-4.29) -- (0,0) -- (8.93,4.29) -- cycle    ;
    \draw [shift={(237,721.5)}, rotate = 0.09] [fill={rgb, 255:red, 0; green, 0; blue, 0 }  ][line width=0.08]  [draw opacity=0] (8.93,-4.29) -- (0,0) -- (8.93,4.29) -- cycle    ;
    %Straight Lines [id:da6843587072590259] 
    \draw    (115.83,704.3) -- (221.83,705.3) ;
    %Straight Lines [id:da5791650269156297] 
    \draw [color={rgb, 255:red, 245; green, 166; blue, 35 }  ,draw opacity=1 ]   (484.83,624.3) -- (484.83,705.3) ;
    
    % Text Node
    \draw (215,709.5) node [anchor=north west][inner sep=0.75pt]    {$p$};
    % Text Node
    \draw (392,708.5) node [anchor=north west][inner sep=0.75pt]    {$q$};
    % Text Node
    \draw (229,620.5) node [anchor=north west][inner sep=0.75pt]    {$x$};
    % Text Node
    \draw (377,651.5) node [anchor=north west][inner sep=0.75pt]    {$y$};
    % Text Node
    \draw (192,650.5) node [anchor=north west][inner sep=0.75pt]    {$\alpha $};
    % Text Node
    \draw (304,722.5) node [anchor=north west][inner sep=0.75pt]    {$\beta $};
    % Text Node
    \draw (412,671.5) node [anchor=north west][inner sep=0.75pt]    {$\gamma $};

    \end{tikzpicture}
    \caption{$ \alpha=d_T(p,x), \gamma=d_T(q,y), \beta=d_T(p,q)$}
    \label{fig:biLip}
\end{figure}
Let $d_E$ denote Euclidean distance in $\mathbb{R}^2$, and let $d_T$ denote the
geodesic distance in $T_a$. Then
\[
d_T(x,y)=\alpha+\beta+\gamma, \qquad
|F(x)-F(y)|^2=(\alpha-\gamma)^2+\beta^2.
\]

We may assume without loss of generality that the branch containing $y$ has smaller
height, that is $H_{T_a}(q) \le H_{T_a}(p)$. By \eqref{eq:unifrom-sep},
\[
\gamma \le c\beta.
\]

\vspace{0.1cm}
\noindent
\textit{Case 1: $\alpha\le \gamma$.}
Then $\alpha\le\gamma\le c\beta$, and hence
\[
    d_T(x,y)=\alpha+\beta+\gamma\le(2c+1)\beta.
\]
Since
\[
    |F(x)-F(y)|^2=(\alpha-\gamma)^2+\beta^2\ge\beta^2,
\]
we obtain $|F(x)-F(y)|\ge\beta$, and therefore
\[
    d_T(x,y)\le(2c+1)|F(x)-F(y)|.
\]

\vspace{0.1cm}
\noindent
\textit{Case 2: $\alpha>\gamma$.}
Then $d_T(x,y)=\alpha+\beta+\gamma=(\alpha-\gamma)+\beta+2\gamma$. Moreover, we have
\[
|F(x)-F(y)|\ge\alpha-\gamma
\qquad\text{and}\qquad
|F(x)-F(y)|\ge\beta.
\]
Using $\gamma\le c\beta$, we obtain
\begin{align*}
    d_T(x,y) & \,\le\,(\alpha-\gamma)+(1+2c)\beta\\
            & \,\le \,|F(x)-F(y)|+(1+2c)|F(x)-F(y)|\\
 & \,=\,(2+2c)|F(x)-F(y)|.
\end{align*}
Thus in all cases
\[
|F(x)-F(y)|\le d_T(x,y)\le (2+2c)|F(x)-F(y)|,
\]
so the embedding is bi-Lipschitz.

Since $T_a$ is a countable union of line segments, its Hausdorff dimension
satisfies $\dim_H(T_a)\le 1$, while the initial segment $I_0\subset T_a$
implies $\dim_H(T_a)\ge 1$. Hence $\dim_H(T_a)=1$. Furthermore, since $T_a$ is a geodesic quasiconformal tree, by
Theorem~\ref{thm:freeman_gartland_lip} we have $\dim_L(T_a)=1$.
In particular, this implies that $1=\dim_N I_0 \le \dim_N(T_a) \le \dim_L(T_a)=1$. 

Finally, for Assouad dimension, we apply Lemma~\ref{lem:whitney_scale_assouad} with $X=I_0$, $Y=T_a$ and $d=1$. The set $I_0$ is closed, $\adim  I_0=1$, and $T_a$ is doubling. Now, let $x\in T_a\setminus I_0$ and suppose $$0<R<\frac{\dist(x, I_0)}{2}.$$
Then $x$ belongs to a unique attached edge, and $B_{T_a}(x,R)$ is contained in that edge since $T_a$ is a geodesic tree. Since this ball is an interval of length at most $2R$, for every $0<r<R$,
\[
    N_{T_a}(B_{T_a}(x,R),r)
    \le3\frac Rr.
\]
Thus all the hypotheses of Lemma~\ref{lem:whitney_scale_assouad} are
satisfied, and $\adim  I_0 = 1$. 
\end{proof}

\section{Proof of Theorem~\ref{thm:mainthm1}}\label{S:mainthmproof}
In this section we prove Theorem~\ref{thm:mainthm1}. The argument proceeds by comparing two bi-Lipschitz embeddings $f_{a}$ and $f_{b}$ corresponding to nearby parameters $a,b\in[1/4,1/3]$. We begin by identifying admissible horizontal intervals in $T_{a}$ and $T_{b}$ which have typical length and are well-stretched as discussed in Section~\ref{S:intervals_lip_maps}. We then show that their endpoints must share the same images in the target tree $M$ and Lemma~\ref{lem:maxstretch} forces agreement of the images of first-generation subdivision points. Iterating along well-stretched descendants, and invoking the typical-length lemma from Section~\ref{S:intervals_lip_maps}, yields the desired contradiction. 

We introduce the necessary notation and terminology in Subsection~\ref{ss:notation5}, establish auxiliary lemmas in Subsection~5.2, a family of parameters which produces contradiction in Subsection~5.3, and complete the proof in Subsection~5.4.

\subsection{Setup and terminology}\label{ss:notation5}

Throughout this section, fix $\bfn \in \N$ and $\bfc \in (0,1)$. We obtain a family $$ \{T_a \mid a\in [1/4,1/3] \text{ and } T_a \in \mathcal{GT}(\bfn, \bfc)\}$$ as constructed in Section~\ref{S:T_a}. Our goal is to prove that $\mathcal{GT}(\bfn, \bfc)$ has no bi-Lispchitz universal element.

Now we define some terminologies associated to $T_a$ for every $ a\in [1/4,1/3]$.
\begin{definition}[Horizontal segment]
Let $H_a\subset T_a$ denote the distinguished interval obtained from the initial segment $[0,1]$ by recursive subdivision in the construction of $T_a$. We refer to $H_a$ as the \emph{horizontal segment} of $T_a$.
\end{definition}

\begin{notation}
    For a horizontal interval $J\subset H_a$ and $n\ge 0$, we denote by $\mathcal I_n^a(J)$ the set of endpoints of intervals in $\mathcal D_n^a(J)$.
\end{notation}

\begin{definition}[Horizontal Lipschitz constants]
Let $f_a:T_a\to M$ be a globally $\left(\frac{1}{L'},L'\right)$-bi-Lipschitz embedding. We say that $L$ is a \emph{horizontal Lipschitz constant} for $f_a$ if
\[
d_M\bigl(f_a(x_a),f_a(y_a)\bigr)\le L\, d_{T_a}(x_a,y_a)\qquad\forall\, x_a,y_a\in H_a.
\]
Equivalently,
\[
\Lip(f_a|_{H_a})\le L.
\]
\end{definition}

\begin{definition}[Adjacent branch points]
Two branch points $b,b'\in H_a$ are said to be \emph{adjacent} if there exists an iteration $k\ge 0$ such that $b$ and $b'$ are the endpoints of a single edge of $T_k^a$.
\end{definition}

\begin{definition}[$(L,\delta)$--well-stretched intervals]\label{def:wellstretched}
Let $I=[s_I,t_I]\subset\R$, let $(X,d)$ be a metric space, let $f:I\to X$ be $L$-Lipschitz, and let $0<\delta<1$. For $n\ge 0$ and $J\in \mathcal D_n^a(I)$, we say that $J$ is \emph{$(L,\delta)$--well-stretched} if
\[
d\bigl(f(s_J),f(t_J)\bigr)>L(1-\delta)\,|J|.
\]
\end{definition}
Recall $\alpha_a = 2a\ln a + (1-2a)\ln(1-2a)$.
\begin{definition}[Admissible intervals]\label{def:admissible}
Fix $n\ge 0, L\geq 1$, a biLipschitz embeddging $f_a:T_a\to M$ and a small $\tau>0$. Let $J_a\subset H_a$ be a horizontal interval. An interval $I\in\mathcal D_n^a(J_a)$ is called \emph{admissible} if: 
\begin{enumerate}
    \item[\textup{(i)}] $I$ is $(L,\delta')$--well-stretched, i.e.
    \[
    d\bigl(f_a(s_I),f_a(t_I)\bigr)>L(1-\delta')\,|I|;
    \]
    \item[\textup{(ii)}] $I$ is typical, i.e.
    \[
    e^{(\alpha_a-\tau)n}\le \frac{|I|}{|J_a|}\le e^{(\alpha_a+\tau)n}.
    \]
\end{enumerate}
An interval $I\in \mathcal D_n^a(J_a)$ is called \emph{non-typical} if
\[
|I|\notin [e^{(\alpha_a-\tau)n}|J_a|,\; e^{(\alpha_a+\tau)n}|J_a|].
\]
\end{definition}

In the proof of Theorem~\ref{thm:mainthm1}, we will invoke overlapping of admissible intervals of two distinct trees $T_a$ and $T_b$ for $a,b$ very close to reach contradiction. 

\begin{definition}[Combinatorial address]
Let $J\subset H_a$ be a horizontal interval. For each $n\ge 0$, every interval
in $\mathcal D_n^a(J)$ is uniquely determined by a word
\[
\omega=(\omega_1,\dots,\omega_n)\in\{1,2,3\}^n,
\]
where $\omega_k=1,2,3$ records whether at the $k$-th subdivision step one chooses
the left, middle, or right child, respectively.

We denote by
\[
I_\omega^a(J)\in \mathcal D_n^a(J)
\]
the unique level-$n$ interval corresponding to the address $\omega$.
When the ambient interval $J$ is clear from context, we simply write $I_\omega^a$.
\end{definition}
Intervals $I_\omega^a\subset J_a$ and $I_\omega^b\subset J_b$ with the same word
$\omega\in\{1,2,3\}^n$ will be called \emph{corresponding intervals}.

\subsection{Auxiliary lemmas}
In this subsection we prove two auxiliary lemmas needed for the proof of Theorem~\ref{thm:mainthm1}.

The first lemma establishes the mapping of endpoints from two trees $T_a$ and $T_b$ onto the common points in a traget tree $M$ under additional hypothesis. The second lemma establishes existence of well-stretched intervals in a trees $T_a$ under a bi-Lispchitz embedding.

\begin{lemma}\label{lem:maxstretch}
Let $M \in \mathcal{GT}(\bfn_M,\bfc_M)$. For $L>0$ and $L' \ge 1$, set
\[
    r(L,L'):=\min\left\{\frac12, \frac{\bfc \bfc_M}{24 L L'}\right\}.
\]
Let $0<\delta<r(L,L')$. Let $a,b\in[1/4,1/3]$ satisfy $|a-b|<\delta,$ and let
\[
    f_a:T_a\to M,\qquad f_b:T_b\to M
\]
be $\left(\frac{1}{L'},L'\right)$-bi-Lipschitz embeddings such that
$\Lip(f_a|_{H_a})\le L$, and $\Lip(f_b|_{H_b})\le L$.

Let $x_a<y_a$ and $x_b<y_b$ be ordered adjacent branch points such that
\[
    f_a(x_a)=f_b(x_b)=m,
    \qquad
    f_a(y_a)=f_b(y_b)=m',
\]
and assume that
\begin{align*}
    d_M(m,m')
    &>
    L(1-\delta)d_{T_a}(x_a,y_a), \text{ and }\\
        d_M(m,m')
    &>
    L(1-\delta)d_{T_b}(x_b,y_b).
\end{align*}
Write
\[
    \mathcal I_1^a([x_a,y_a])\setminus\{x_a,y_a\}
    =
    \{m_a^1,m_a^2\},
\]
\[
    \mathcal I_1^b([x_b,y_b])\setminus\{x_b,y_b\}
    =
    \{m_b^1,m_b^2\},
\]
where each pair is ordered from left to right. Then
\[
    f_a(m_a^i)=f_b(m_b^i),
    \qquad i=1,2.
\]
\end{lemma}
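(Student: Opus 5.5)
The plan is to locate the images $f_a(m_a^i)$ and $f_b(m_b^i)$ on the common geodesic arc $[m,m']_M$, show that they lie very close to each other, and then use branch separation in $M$ to force equality. Write $\ell_a:=d_{T_a}(x_a,y_a)$, $\ell_b:=d_{T_b}(x_b,y_b)$ and $D:=d_M(m,m')$.

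\emph{Step 1: both images lie on one arc.} By Lemma~\ref{lem:tree_embedding_arcs_heights}, $f_a([x_a,y_a]_{T_a})=[m,m']_M=f_b([x_b,y_b]_{T_b})$. Since $M$ is geodesic, this arc is isometric to $[0,D]$, so we can parametrize it by the distance to $m$. Thus every point $z$ of the arc has a position $d_M(m,z)$, and $d_M(z,z')$ equals the difference of positions.

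\emph{Step 2: pin down the positions.} The point $m_a^1$ is at distance $a\ell_a$ from $x_a$ and at distance $(1-a)\ell_a$ from $y_a$. The Lipschitz bound gives $d_M(m,f_a(m_a^1))\le La\ell_a$. Since $f_a(m_a^1)$ lies on the arc, we have $d_M(m,f_a(m_a^1))=D-d_M(f_a(m_a^1),m')$. Combining this with $d_M(f_a(m_a^1),m')\le L(1-a)\ell_a$ and $D>L(1-\delta)\ell_a$ yields
\[
La\ell_a-L\delta\ell_a\le d_M\bigl(m,f_a(m_a^1)\bigr)\le La\ell_a .
\]
The same argument applies to $b$. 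The hypotheses also give $D\le L\ell_a,L\ell_b< D/(1-\delta)$, because the Lipschitz bound applied to the endpoints gives $D\le L\ell$. Hence $|L\ell_a-L\ell_b|\le 2\delta D$, and $L\ell_a,L\ell_b\le 2D$. Using $|a-b|<\delta$ and $b\le 1/3$, we get
\[
d_M\bigl(f_a(m_a^1),f_b(m_b^1)\bigr)\le |a-b|L\ell_a+b\,|L\ell_a-L\ell_b|+L\delta\max\{\ell_a,\ell_b\}\le C_0\,\delta D ,
\]
with an absolute constant $C_0$ around $5$. I will tighten the bookkeeping here, using the one-sided nature of the bounds above so that the two error intervals overlap, to make sure $C_0<6$.

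\emph{Step 3: separation forces equality.} In $T_a$, the point $m_a^1$ is a branch point whose third branch is the attached edge of length $\bfc a\ell_a\ge \bfc\ell_a/4$. By Lemma~\ref{lem:tree_embedding_arcs_heights}, $f_a(m_a^1)\in\mathcal B(M)$ with $H_M(f_a(m_a^1))\ge \bfc\ell_a/(4L')\ge \bfc D/(4LL')$, using $L\ell_a\ge D$. The same bound holds for $f_b(m_b^1)$. Suppose the two images were distinct. Then uniform branch separation in $M$ would give
\[
d_M\bigl(f_a(m_a^1),f_b(m_b^1)\bigr)\ge \bfc_M\,\frac{\bfc D}{4LL'} .
\]
On the other hand, $\delta<\bfc\bfc_M/(24LL')$ gives $C_0\delta D<\bfc\bfc_M D/(4LL')$ as soon as $C_0\le 6$, which is a contradiction. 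The case $i=2$ is identical: positions near $L(1-a)\ell_a$ and $L(1-b)\ell_b$ are estimated in the same way, or one argues symmetrically from $m'$.

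The main obstacle is Step 2. The constant $24$ in $r(L,L')$ leaves little room, so the estimate of the position discrepancy has to be done carefully. I would first try to bound everything in terms of $D$ via $D\le L\ell<D/(1-\delta)$ with $\delta<1/2$. If the constant still comes out slightly above $6$, I would sharpen the height lower bound by using $a\ge1/4$ together with the exact quantity $L\ell\ge D$.
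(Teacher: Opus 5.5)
Your proposal is correct and follows the paper's proof. In both, the two images are placed on the common arc $[m,m']_M$ and pinned to within $O(\delta)$ of $La\ell_a$ and $Lb\ell_b$, using the horizontal Lipschitz bound from each endpoint together with near-maximal stretch; branch separation in $M$ is then contradicted via the height bound $H_M\ge \bfc a\ell/L'$ from Lemma~\ref{lem:tree_embedding_arcs_heights}. The only differences are bookkeeping: you normalize by $D$ instead of by $\min\{\ell_a,\ell_b\}$ and exploit the one-sided error intervals, and your displayed estimate already gives $C_0\le 2+\tfrac23+2=\tfrac{14}{3}<6$, so no further tightening is needed.
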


\begin{proof}
Let $a,b\in[1/4,1/3]$ with $|a-b|\le \delta$ and let $f_a:T_a\to M$ and $f_b:T_b\to M$ satisfy the hypotheses of the lemma. Denote 
\[
    \ell_a:=d_{T_a}(x_a,y_a),
    \,\,\,
    \ell_b:=d_{T_b}(x_b,y_b),
    \,\,\,
    \bfx := f_a(x_a) = f_b(x_b),
    \,\,\,
    \bfy:= f_a(y_a) = f_b(y_b).
\]   
By Lemma~\ref{lem:tree_embedding_arcs_heights},
\[
f_a([x_a,y_a])
=
[m,m']_M
=
f_b([x_b,y_b]).
\] 
Using the hypotheses and the horizontal $L$-Lipschitz bounds, we have
\[
    L(1-\delta)\ell_a<\, d_M\bigl(\bfx,\bfy\bigr) \, \le  L\ell_b
\]
and
\[
    L(1-\delta)\ell_b < \, d_M\bigl(\bfx,\bfy\bigr) \, \le  L\ell_a.
\]
Therefore,
\begin{equation}\label{eq:length_ratio}
    1-\delta
    \le 
    \frac{\ell_b}{\ell_a}
    \le 
    \frac{1}{1-\delta}.
\end{equation}

Let $m_a^1\in \mathcal I_1^a([x_a,y_a])\setminus\{x_a,y_a\}$ be the unique point satisfying $d(x_a,m_a^1)=a\,\ell_a$, and define $m_b^1\in \mathcal I_1^b([x_b,y_b])\setminus\{x_b,y_b\}$ analogously by $d(x_b,m_b^1)=b\,\ell_b$.

Since $f_a$ is $L$-Lipschitz,
\begin{equation}\label{eq:upper_a}
d\bigl(\bfx,f_a(m_a^1)\bigr)\ \le\ L a\,\ell_a.
\end{equation}

On the other hand,  using the maximal stretch of $[x_a, y_a]$ and the $L$-Lipschitz bound for the segment $[m_a^1, y_a]$, we obtain
\begin{align}
d\bigl(\bfx,f_a(m_a^1)\bigr)\ & \ge\  d\bigl(\bfx,\bfy\bigr) - d\bigl(f_a(m_a^1), \bfy\bigr) \nonumber\\
    & \ge \ L(1-\delta) \ell_a - L(1-a)\ell_a = L(a -\delta)\ell_a\label{eq:lower_a}
\end{align}

Combining \eqref{eq:upper_a} and \eqref{eq:lower_a}, we get
\begin{equation*}
    L(a-\delta)\ell_a\  \le \ d\bigl(\bfx,f_a(m_a^1)\bigr)\ \le \ La \ell_a.
\end{equation*}
Using \eqref{eq:length_ratio} and since $\delta \le 1/2$, analogously, for $T_b$, we obtain
\begin{equation*}
    L(b-\delta)\ell_b\  \le \ d\bigl(\bfx,f_b(m_b^1)\bigr)\ \le \ Lb \ell_b \ \le\ \frac{Lb}{1-\delta}\,\ell_a.
\end{equation*}

Without loss of generality, assume that $\ell_a \le \ell_b$, we may interchange $a$ and $b$ if necessary. Let the upper bound for target distances be $La \ell_a$ and $Lb \ell_b$, then
\begin{align*}
    |La \ell_a-Lb \ell_b|
    &=L\bigl|a\,\ell_a -b\,\ell_b\bigr|\\
    &\le L|a-b|\,\ell_a+Lb\,\bigl|\ell_a-\ell_b\bigr|\\
    &\le L\delta\,\ell_a+L\cdot \frac{1}{3}\cdot \frac{\delta}{1-\delta}\,\ell_a\\
    &\le L\delta\,\ell_a+L\cdot \frac{1}{3}\cdot 2\delta\,\ell_a\\
    &= \frac53 L\delta\,\ell_a
    \le 2L\delta\,\ell_a.
\end{align*}
Assume for contradiction that $f_a(m_a^1)\neq f_b(m_b^1)$. Since $M$ is a geodesic tree, $$d\bigl(f_a(m_a^1),f_b(m_b^1)\bigr) = \bigl|d\bigl(\bfx, f_a(m_a^1)\bigr) \ - \ d\bigl(\bfx, f_a(m_b^1)\bigr)\bigr|.$$ 
\begin{align}
    d\bigl(f_a(m_a^1),f_b(m_b^1)\bigr)
    &\le \bigl|d(\bfx,f_a(m_a^1))-La\ell_a\bigr|
         +|La\ell_a-Lb\ell_b|
         +\bigl|Lb\ell_b-d(\bfx,f_b(m_b^1))\bigr|\notag\\
    &\le 2L\delta\,\ell_a+2L\delta\,\ell_a+2L\delta\,\ell_a\notag\\
    &\le 6L\delta\,\ell_a\label{eq:image_upper_bound}.
    \end{align}

    % Finally, since $d(\bfx,\bfy)\ge L(1-\delta)d(x_a,y_a)$, we have
    % \begin{equation}\label{eq:close_midpoints}
    % d\bigl(f_a(m_a^1),f_b(m_b^1)\bigr)\ \le\ 12 \delta\, d(\bfx, \bfy).
    % \end{equation}

    Moreover, $T_a, T_b  \in \mathcal{GT}(\bfn,\bfc)$. Since both points are branch points of $M$ and $M  \in \mathcal{GT}(\bfn_M,\bfc_M)$, the uniform relative separation condition gives
    \begin{equation}\label{eq:image_lower_bound}
        d\bigl(f_a(m_a^1),f_b(m_b^1)\bigr) \ge \bfc_M \;\min\bigl\{H_M(f_a(m_a^1)), H_M(f_b(m_b^1))\bigr\}.
    \end{equation}
    By the construction of $T_a$, we obtain
    \[
    H_M(f_a(m_a^1))\ge \frac{1}{L'}\,H_{T_a}(m_a^1) =\frac{1}{L'} \bfc a \,d(x_a,y_a)\, \ge \frac{\bfc }{4L'}\ell_a.
    \]
    Similarly,
    \[
        H_M(f_b(m_b^1))
        \ge
        \frac{1}{L'}H_{T_b}(m_b^1)
        =
        \frac{\bfc b}{L'}d_{T_b}(x_b,y_b)
        \ge
        \frac{\bfc}{4L'}\ell_b
        \ge
        \frac{\bfc}{4L'}\ell_a,
    \]
    where the last inequality follows from our assumption that $\ell_a \le \ell_b$.
    Therefore,
    \[
    \min\left\{
        H_M(f_a(m_a^1)),
        H_M(f_b(m_b^1))
    \right\}
    \ge
    \frac{\bfc}{4L'}d_{T_a}(x_a,y_a).
    \]
    
    Combining \eqref{eq:image_lower_bound} and \eqref{eq:image_upper_bound}, we have 
    \begin{equation*}
        \frac{\;\bfc \bfc_M}{4L'} \le 6L\delta.
    \end{equation*}
    Thus, if $\delta<\frac{\bfc \bfc_M}{24\,LL'}$,
    we obtain a contradiction. Therefore
    $
    f_a(m_a^1)=f_b(m_b^1)
    $.
    Applying the same argument from the common right endpoint $m'$ to the
second subdivision points $\bfy$ in place of $\bfx$ gives $f_a(m_a^2)=f_b(m_b^2)$. 
\end{proof}

\begin{lemma}\label{lem:adjacent_horizontal_pair}
Let $f_a:T_a\to M$ be an $L$-Lipschitz embedding, and set
\[
\lambda_a:=\Lip_{\rm{hor}}(f_a)=\Lip(f_a|_{H_a}).
\]
Then for every $\varepsilon>0$ there exist adjacent branch points $x_a,y_a\in H_a$ such that
\[
\frac{d_M(f_a(x_a),f_a(y_a))}{d_{T_a}(x_a,y_a)}>\lambda_a-\varepsilon.
\]
\end{lemma}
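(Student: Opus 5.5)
By definition of $\lambda_a=\Lip(f_a|_{H_a})$, first choose $u<v$ in $H_a$ with
\[
d_M\bigl(f_a(u),f_a(v)\bigr)>(\lambda_a-\varepsilon/2)\,d_{T_a}(u,v).
\]
We may take $\lambda_a>\varepsilon$, since otherwise any pair of adjacent branch points works trivially. The plan is to replace $u,v$ by subdivision points and then pass to a single edge of some $T_k^a$.

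\emph{Approximation by branch points.} The subdivision endpoints $\bigcup_n\mathcal I_n^a(H_a)$ are dense in $H_a$, because $\max\{|J|:J\in\mathcal D_n^a(H_a)\}\le 2^{-n}$. Every such endpoint is a branch point: the interior ones carry attached edges and the endpoints of $I_0$ carry two edges. Since $f_a$ is Lipschitz, we can pick branch points $u',v'$ of some common level $\mathcal I_N^a(H_a)$ close enough to $u,v$ that
\[
d_M\bigl(f_a(u'),f_a(v')\bigr)>(\lambda_a-\varepsilon)\,d_{T_a}(u',v').
\]
This perturbation costs only $O(\lambda_a\cdot\text{error})$ in the numerator and a comparably small relative change in the denominator, provided $u\neq v$.

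\emph{Reduction to an adjacent pair.} Let $u'=p_0<p_1<\dots<p_m=v'$ be the points of $\mathcal I_N^a(H_a)$ in $[u',v']$. Consecutive $p_{i-1},p_i$ are endpoints of an interval of $\mathcal D_N^a(H_a)$, so they are the endpoints of a single horizontal edge of $T_N^a$, i.e.\ they are adjacent. Note that $d_{T_a}(u',v')=\sum_i d_{T_a}(p_{i-1},p_i)$ because $H_a$ is geodesic. Suppose every ratio satisfied $d_M(f_a(p_{i-1}),f_a(p_i))\le(\lambda_a-\varepsilon)\,d_{T_a}(p_{i-1},p_i)$. Then the triangle inequality would give
\[
d_M\bigl(f_a(u'),f_a(v')\bigr)\le(\lambda_a-\varepsilon)\,d_{T_a}(u',v'),
\]
which is a contradiction. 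Hence some consecutive pair $x_a=p_{i-1}$, $y_a=p_i$ has ratio $>\lambda_a-\varepsilon$.

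The only mildly delicate step is the perturbation. It requires the density of subdivision endpoints together with continuity of $f_a$, and the averaging argument in the last step is the same triangle-inequality trick as in Lemma~\ref{lem:shrinkingsmall}.
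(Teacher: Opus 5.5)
Your proposal is correct and follows essentially the same route as the paper: approximate a near-extremal pair by subdivision endpoints at a common level, then apply the triangle inequality along the consecutive level-$N$ intervals to extract an adjacent pair with ratio exceeding $\lambda_a-\varepsilon$. Your extra remarks, namely the trivial case $\lambda_a\le\varepsilon$ and the explicit perturbation estimate, only spell out details the paper leaves implicit.
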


\begin{proof}
By the definition of $\lambda_a$, there exist points $x<y$ in $H_a$ such that
\[
\frac{d_M(f_a(x),f_a(y))}{d_{T_a}(x,y)}>\lambda_a-\frac{\varepsilon}{2}.
\]
Since the subdivision endpoints are dense in $H_a$, we may choose subdivision endpoints
\[
s,t\in\bigcup_{n\ge 0}\mathcal I_n^a(H_a),\qquad s<t,
\]
such that
\[
\frac{d_M(f_a(s),f_a(t))}{d_{T_a}(s,t)}>\lambda_a-\varepsilon.
\]
Choose $n$ large enough so that $s,t\in\mathcal I_n^a(H_a)$. Then the segment $[s,t]\subset H_a$ is a finite union of level-$n$ subdivision intervals:
\[
[s,t]=J_1\cup\cdots\cup J_N,\qquad J_i=[u_{i-1},u_i],
\]
where each $J_i\in\mathcal D_n^a(H_a)$, and hence each $J_i$ joins adjacent branch points in $H_a$.

If every $J_i$ satisfied
\[
\frac{d_M(f_a(u_{i-1}),f_a(u_i))}{d_{T_a}(u_{i-1},u_i)}\le\lambda_a-\varepsilon,
\]
then by the triangle inequality,
\[
d_M(f_a(s),f_a(t))
\le
\sum_{i=1}^N d_M(f_a(u_{i-1}),f_a(u_i))
\le
(\lambda_a-\varepsilon)\sum_{i=1}^N d_{T_a}(u_{i-1},u_i)
=
(\lambda_a-\varepsilon)\,d_{T_a}(s,t),
\]
contradicting the choice of $s,t$. Therefore, for at least one $i$,
\[
\frac{d_M(f_a(u_{i-1}),f_a(u_i))}{d_{T_a}(u_{i-1},u_i)}>\lambda_a-\varepsilon.
\]
Taking $x_a=u_{i-1}$, $y_a=u_i$ proves the lemma.
\end{proof}

\subsection{Family of parameters and covers}\label{subsec:family_parameters}
Let $n_M\ge3$, let $c_M\in(0,1)$, and let
\[
    M\in\mathcal{GT}(n_M,c_M).
\]
Fix $\eta\in(0,\tfrac{1}{100})$. For
$L\in\Q_{>0}$ and $L'\in\Q_{\ge1}$, set
\[
    \delta_0(L,L')
    :=
    \eta\frac{\bfc c_M}{100LL'}.
\]
For every $L'\in\Q_{\ge1}$, set
\[
    \delta_{L'}
    :=
    \delta_0(L',L')
    =
    \eta\frac{\bfc c_M}{100(L')^2}.
\]
In particular, if $L<L'$, then
\[
    0<\delta_{L'}<\delta_0(L,L').
\]

\noindent
\underline{First family}: For $L \in \Q_{>0}$ and $L'\in\Q_{\ge 1}$, define
\begin{align}\label{eq:ALL'}
    A_{L,L'}
    :=
    \Bigl\{
    a\in[1/4,1/3]:
    &\ \exists\, f_a:T_a\to M
    \text{ a }(1/L',L')\text{-bi-Lipschitz embedding such that} \nonumber\\
    &\quad
    L(1-\delta_{L'})
    <
    \Lip(f_a|_{H_a})
    \le L
    \Bigr\}.
\end{align}
We claim that every parameter $a$ for which $T_a$ bi-Lipschitz embeds into $M$
belongs to some $A_{L,L'}$ with $L<L'$. 

Indeed, suppose $f_a:T_a\to M$ is bi-Lipschitz and let $\lambda_a := \Lip(f_a|_{H_a})$. Choose $L'\in\Q_{\ge 1}$ so large that $f_a$ is $(1/L',L')$-bi-Lipschitz, and $\lambda_a< L'$. We now choose a rational $L\ge \lambda_a$ sufficiently close to $\lambda_a$ so that
\[
    L(1-\delta_{L'})<\lambda_a \le L<L' .
\]
Such an $L$ exists by density of $\Q$ and therefore, $a\in A_{L,L'}$.

\begin{comment}    
\begin{align}\label{eq:ALL'}
A_{L,L'}
:=
\Bigl\{
a\in[1/4,1/3]:
\exists\,f_a:T_a\to M 
\text{ a }(1/L',L')\text{-bi-Lipschitz}\nonumber\\
\text{embedding such that }
\Lip_{\mathrm{hor}}(f_a)\le L
\Bigr\}.
\end{align}
Thus $A_{L,L'}$ consists of those parameters $a$ for which there exists an embedding
that is globally bi-Lipschitz with constants $\left(\frac{1}{L'},L'\right)$, and
whose restriction to the horizontal segment is $L$-Lipschitz.

Let $\delta_0:=\delta_0(L,L')>0$ be the constant given by Lemma~\ref{lem:maxstretch}.

 For each $L_*\in\Q\cap[1/L',L]$ and branch points $m,m'\in\mathcal{B}(M)$, define

\begin{align}\label{eq:ALL'mm'}
A^{L,L'}_{L_*,m,m'}
:=
\Bigl\{
a\in[1/4,1/3]:
\exists\, f_a:T_a\to M \text{ such that }
f_a \text{ is } (1/L',L')\text{-bi-Lipschitz}, \nonumber\\
\Lip_{\mathrm{hor}}(f_a)\le L,
\exists\,\text{adjacent branch points }x_a,y_a\in H_a \text{ with } \nonumber\\
f_a(x_a)=m, f_a(y_a)=m',
d_M(f_a(x_a),f_a(y_a))\ge L_*(1-\delta_0)\,d_{T_a}(x_a,y_a)
\Bigr\}.&
\end{align}
Henceforth, we will simply write $\delta_0$ to mean $\delta_0(L,L')$.
\end{comment}

\vspace{0.3cm}
\noindent
\underline{Second family}: For $m,m'\in\mathcal{B}(M)$, define
\begin{align}\label{eq:ALL'mm'}
    A_{L,L'}^{m,m'}
    :=
    \Bigl\{
    a\in A_{L,L'}:
    &\ \exists\, f_a:T_a\to M
    \text{ as in } \eqref{eq:ALL'}, \nonumber\\
    &\ \exists\,\text{adjacent branch points }x_a,y_a\in H_a
    \text{ such that} \nonumber\\
    &\quad f_a(x_a)=m,\qquad f_a(y_a)=m', \nonumber\\
    &\quad d_M(f_a(x_a),f_a(y_a))
    >
    L(1-\delta_{L'})\,d_{T_a}(x_a,y_a)
    \Bigr\}.
\end{align}

Next, we prove that the set $A_{L,L'}$ can be covered by a family of sets $A_{L,L'}^{m,m'}$.
\begin{lemma}\label{lem:AL_cover}
    For every $L\in \Q_{>0},L'\in\Q_{\ge 1}$,
    \[
    A_{L,L'}
    \subseteq
    \bigcup_{m,m'\in\mathcal{B}(M)} A_{L,L'}^{m,m'}.
    \]
\end{lemma}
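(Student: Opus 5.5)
Fix $a\in A_{L,L'}$ and let $f_a:T_a\to M$ be a $(1/L',L')$-bi-Lipschitz embedding with $L(1-\delta_{L'})<\lambda_a:=\Lip(f_a|_{H_a})\le L$, as in \eqref{eq:ALL'}. The plan is to use the slack in this strict inequality to find one adjacent pair of branch points that is almost maximally stretched. The images of that pair then serve as the $m,m'$ in \eqref{eq:ALL'mm'}.

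First, set $\varepsilon:=\lambda_a-L(1-\delta_{L'})>0$. Since $f_a$ is in particular Lipschitz, we can apply Lemma~\ref{lem:adjacent_horizontal_pair} with this $\varepsilon$. It yields adjacent branch points $x_a,y_a\in H_a$ with
\[
\frac{d_M(f_a(x_a),f_a(y_a))}{d_{T_a}(x_a,y_a)}>\lambda_a-\varepsilon=L(1-\delta_{L'}).
\]
This is exactly the stretching condition required in \eqref{eq:ALL'mm'}.

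Second, we check that $x_a$ and $y_a$ are branch points of $T_a$. Adjacent points are endpoints of an edge of some $T_k^a$ lying in $H_a$. Such endpoints are either subdivision points, at which an edge has been attached, or the endpoints of $I_0$, at which two edges are attached. In either case they have valence $3$. Lemma~\ref{lem:tree_embedding_arcs_heights} then gives that $m:=f_a(x_a)$ and $m':=f_a(y_a)$ are branch points of $M$, i.e.\ $m,m'\in\mathcal B(M)$.

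Finally, the same $f_a$ witnesses both conditions in the definition of $A^{m,m'}_{L,L'}$, so $a\in A_{L,L'}^{m,m'}$, and taking the union over all $m,m'\in\mathcal B(M)$ gives the inclusion. The only delicate point is the strict inequality: we need $\lambda_a>L(1-\delta_{L'})$ strictly in order to have room for a positive $\varepsilon$, and this is guaranteed by the definition of $A_{L,L'}$. Nothing else appears to be an obstacle. The purpose of the lemma is presumably that $\mathcal B(M)$ is countable (Lemma~\ref{lem:countable_branch_points}), so the cover is a countable one.
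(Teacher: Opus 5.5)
Your proposal is correct and follows the paper's proof essentially verbatim: you take $f_a$ from the definition of $A_{L,L'}$, use the strict gap $\lambda_a > L(1-\delta_{L'})$ to apply Lemma~\ref{lem:adjacent_horizontal_pair}, and then use Lemma~\ref{lem:tree_embedding_arcs_heights} to place $m=f_a(x_a)$ and $m'=f_a(y_a)$ in $\mathcal B(M)$. The differences are cosmetic. You take $\varepsilon$ equal to the full gap, which is fine because that lemma gives a strict inequality, whereas the paper takes $\varepsilon$ slightly smaller; and your check that adjacent points are branch points is already built into the paper's definition of adjacency.
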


\begin{proof}
Let $a\in A_{L,L'}$. Choose an embedding $f_a:T_a\to M$ as in the definition of
$A_{L,L'}$, and let $\lambda_a = \Lip(f_a|_{H_a})$.
By definition of $A_{L,L'}$, $L(1-\delta_{L'})<\lambda_a\le L$.
Choose $\varepsilon>0$ so small that
\[
    L(1-\delta_{L'})<\lambda_a-\varepsilon.
\]
By Lemma~\ref{lem:adjacent_horizontal_pair}, there exist adjacent branch points
$x_a,y_a\in H_a$ such that
\[
    \frac{d_M(f_a(x_a),f_a(y_a))}{d_{T_a}(x_a,y_a)}
    >
    \lambda_a-\varepsilon
    >
    L(1-\delta_{L'})
\]
and hence 
\[
    d_M(f_a(x_a),f_a(y_a))\, >\, L(1-\delta_{L'}) d_{T_a}(x_a, y_a).
\]
Denoting $m:=f_a(x_a)$ and $m':=f_a(y_a)$, we have $m,m'\in \mathcal{B}_M$ by Lemma~\ref{lem:tree_embedding_arcs_heights}. Hence $a\in A_{L,L'}^{m,m'}$. Since $a\in A_{L,L'}$ was arbitrary, the claim follows.
\end{proof}

\begin{comment}
    
\begin{lemma}[Covering family]\label{lem:AL_cover}
For every $L,L'\ge 1$,
\[
A_{L,L'}
\subseteq
\bigcup_{L_*\in\Q\cap[1/L',L]}
\bigcup_{m,m'\in\mathcal{B}(M)}
A^{L,L'}_{L_*,m,m'}.
\]
\end{lemma}

\begin{proof}
Let $a\in A_{L,L'}$. Choose an embedding $f_a:T_a\to M$ as in the definition of
$A_{L,L'}$, and let
\[
\lambda_a:=\Lip_{\mathrm{hor}}(f_a).
\]
Then
\[
\frac{1}{L'}\le\lambda_a\le L.
\]
Choose a rational number $L_*\in\Q\cap[1/L',L]$ such that
\[
\lambda_a\le L_*<\frac{\lambda_a}{1-\delta_0}.
\]
Hence $L_*(1-\delta_0)<\lambda_a$. Choose $\varepsilon>0$ so small that
\[
L_*(1-\delta_0)<\lambda_a-\varepsilon.
\]
By Lemma~\ref{lem:adjacent_horizontal_pair}, there exist adjacent branch points
$x_a,y_a\in H_a$ such that
\[
\frac{d_M(f_a(x_a),f_a(y_a))}{d_{T_a}(x_a,y_a)}
>
\lambda_a-\varepsilon
>
L_*(1-\delta_0).
\]
Therefore
\[
d_M(f_a(x_a),f_a(y_a))
\ge
L_*(1-\delta_0)\,d_{T_a}(x_a,y_a).
\]
Let $m:=f_a(x_a)$ and $ m':=f_a(y_a)$. Since $x_a$ and $y_a$ are branch points of $T_a$ and $f_a$ is an embedding of
trees, we have $m,m'\in\mathcal{B}(M)$. Thus
\[
a\in A^{L,L'}_{L_*,m,m'}.
\]
Since $a\in A_{L,L'}$ was arbitrary, the desired covering follows.
\end{proof}
\end{comment}

\subsection{Proof of Theorem~\ref{thm:mainthm1}}
Let $M \in \mathcal{GT}(\bfn,\bfc)$ and define
\[ 
    E_M :=
        \{a\in[1/4,1/3]: T_a \text{ admits a bi-Lipschitz embedding into } M\}. 
\]

We prove that $E_M$ is countable. Suppose, to the contrary, that $E_M$ is uncountable. By the covering claim above, 
\[
    E_M
    \subseteq \bigcup_{\substack{
            L\in\Q_{>0},\,L'\in\Q_{\ge 1}\\
            L<L'}}
        A_{L,L'}.
\]
There exist $L\in\Q_{>0}$ and $L'\in\Q_{\ge1}$, with $L<L'$, such that $A_{L,L'}$ is uncountable. Furthermore, by Lemma~\ref{lem:AL_cover},
\begin{equation*}
   A_{L,L'}
    \subseteq
    \bigcup_{m,m'\in\mathcal B_{M}}
    A_{L,L'}^{m,m'}.
\end{equation*}

Since $M$ is doubling and has bounded valency, the set $\mathcal{B}(M)$ of branch points of $M$ is countable by Lemma~\ref{lem:countable_branch_points}. Thus there exits a family of parameters $A_* := A_{L,L'}^{m,m'}$ so that $A_*$ is uncountable. 

Let $\eta$ be as fixed in
\S~\ref{subsec:family_parameters}. For the fixed $L,L'$, set
\[
    \delta_1
    :=
    \frac{\delta_{L'}}{\eta}
    =
    \frac{\bfc c_M}{100(L')^2}.
\]
Since $\eta<1$, we have
\[
    0<\delta_{L'}<\delta_1.
\]
Moreover, since $\bfc,c_M<1$ and $L'\ge 1$,
\[
    \delta_1<\frac1{100}<\frac12.
\]
Since $L<L'$, we also have
\[
    \delta_1
    =
    \frac{\bfc c_M}{100(L')^2}
    <
    \frac{\bfc c_M}{24LL'},
\]
and thus,
\[
    0<\delta_{L'}<\delta_1
    <
    \min\left\{
    \frac12,
    \frac{\bfc c_M}{24LL'}
    \right\}
    =
    r_M(L,L').
\]
Since $A_*\subset[1/4,1/3]$ is uncountable, there exist
distinct $a,b\in A_*$ such that
\[
    |a-b|<\delta_{L'}<\delta_1.
\]

% Let $$ \delta_1 := \delta_{L'}/\eta.$$
% \begin{tikzpicture}
%     \draw[springgreen] (0,0) -- (12,0);
% \end{tikzpicture}
\begin{comment}   
For this fixed $L'$, let $\delta_0(L,L')=\delta_0(L',L') < \tfrac{c\tau}{6 (L')^2}$ be the constant given by Lemma~\ref{lem:maxstretch}.
\noindent 
Denote
    \[
    \delta_0:=\delta_0(L,L')
    \] 
By shrinking $\delta_0$ if necessary, we may assume
\[
\delta_0<\eta\,\frac{c^2}{24 \,(L')^2} < \eta\,\frac{c^2}{24 \,L L'} \qquad  \text{ since }L < L'.
\]
Since $A_*\subset [1/4,1/3]$ is uncountable, there exist distinct
$a,b\in A_*$ such that
\[
|a-b|<\delta_0.
\]
\end{comment}
\noindent
Thus, by definition of $A_*$, we may find embeddings
\[
    f_a:T_a\to M,
    \qquad
    f_b:T_b\to M,
\]
and adjacent branch points
\[
x_a,y_a\in H_a,
\qquad
x_b,y_b\in H_b
\]
for which 
\begin{equation}\label{eq:endpoints_match_new}
    f_a(x_a)=m=f_b(x_b),
    \qquad
    f_a(y_a)=m'=f_b(y_b),
\end{equation}
and
\[
d_{M}(f_a(x_a),f_a(y_a))
\ge
L(1-\delta_{L'})\,d_{T_a}(x_a,y_a),
\]
\[
d_{M}(f_b(x_b),f_b(y_b))
\ge
L(1-\delta_{L'})\,d_{T_b}(x_b,y_b).
\]
Write $J_a:=[x_a,y_a]\subset H_a$ and $J_b:=[x_b,y_b]\subset H_b$. Then $J_a$ and $J_b$ are $(L,\delta_{L'})$--well-stretched. Since $
|a-b|<\delta_{L'}$, all hypotheses of Lemma~\ref{lem:maxstretch} are satisfied. Hence, if
\[
    \{m_a^1,m_a^2\}
    =
    \mathcal I_1^a(J_a)\setminus\{x_a,y_a\},
    \qquad
    \{m_b^1,m_b^2\}
    =
    \mathcal I_1^b(J_b)\setminus\{x_b,y_b\},
\]
then
\[
f_a(m_a^i)=f_b(m_b^i),
\qquad i=1,2.
\]
In other words, the first-generation subdivision points of $J_a$ and $J_b$
have the same images in $ M$.

Now by the choice of $\delta_1$,  
% \[
% \delta_1 :=\delta_{L'}/\eta
% \frac{\delta_0}{\eta}<\delta_1<\frac{c^2}{24\,L_*L'}.
% \]
% \[
% \delta_1 :=\frac{\delta_{L'}}{\eta} > \delta_{L'}.
% \]
since $\delta_{L'}< \delta_1$, then $1-\delta_{L'}> 1-\delta_1$, thus every $(L,\delta_{L'})$--well-stretched interval is also $(L,\delta_1)$--well-stretched.  Moreover, $|a-b|< \delta_{L'}< \delta_1$.

We apply Lemma~\ref{lem:shrinkingsmall} to $f_a|_{J_a}$ and $f_b|_{J_b}$
with parameters $\eta$ and $\delta_1$, taking
\[
    \varepsilon=\eta\delta_1=\delta_{L'}.
\] 
Indeed, the $(L,\delta_{L'})$--well-stretching of $J_a$ and
$J_b$ gives precisely the endpoint-stretching hypothesis required
by that lemma and we obtain bad sets
\[
    B_a\subset J_a,
    \qquad
    B_b\subset J_b
\]
such that
\[
    \mathcal L(B_a)\le \eta |J_a|,
    \qquad
    \mathcal L(B_b)\le \eta |J_b|.
\]
Moreover, if $p_a\in J_a\setminus B_a$, then every descendant interval of $J_a$ containing $p_a$ is
$(L,\delta_1)$-well-stretched. The analogous statement holds for every $p_b\in J_b\setminus B_b$.

In the arguments below, we always use Lemma~\ref{lem:shrinkingsmall} in the following
pointwise sense: if a point $p\in J_a\setminus B_a$, then every interval along the
combinatorial address of $p$ is $(L,\delta_1)$--well-stretched; similarly for
points in $J_b\setminus B_b$.

% Let
% \[
% \varepsilon:=\eta\delta_1.
% \]
% Then $\delta_0<\varepsilon<\delta_1$. Hence every $(L,\delta_0)$-well-stretched interval is also
% $(L,\varepsilon)$-well-stretched and $(L,\delta_1)$-well-stretched. The significance of this choice is that a parent interval coming from the definition of one of the families $A_{L,L'}^{m,m'}$ automatically satisfies the hypotheses of both Lemma~\ref{lem:maxstretch} and Lemma~\ref{lem:shrinkingsmall}.

Applying Lemma~\ref{lem:maxstretch} to the parent intervals $J_a$ and $J_b$, and using
\eqref{eq:endpoints_match_new}, we conclude that the first-generation subdivision points
have the same images. In particular, if
\[
    \{m_a^1,m_a^2\}=\mathcal I_1^a(J_a)\setminus\{x_a,y_a\},
    \qquad
    \{m_b^1,m_b^2\}=\mathcal I_1^b(J_b)\setminus\{x_b,y_b\},
\]
then
\[
    f_a(m_a^i)=f_b(m_b^i),
    \qquad i=1,2.
\]
For each $n\ge 0$ and each address $\omega\in\{1,2,3\}^n$, let
\[
    I_\omega^a\in\mathcal D_n^a(J_a),
    \qquad
    I_\omega^b\in\mathcal D_n^b(J_b)
\]
denote the corresponding level-$n$ subintervals determined by the subdivision rule.

\noindent
\textit{Claim 1 (combinatorial addresses).}
Let $\omega\in\{1,2,3\}^n$. Suppose that every ancestor of $I_\omega^a$ and of
$I_\omega^b$ is $(L,\delta_1)$--well-stretched with the convention that every ancestor includes the interval itself. Then
\[
    f_a(s_{I_\omega^a})=f_b(s_{I_\omega^b})
    \qquad\text{and}\qquad
    f_a(t_{I_\omega^a})=f_b(t_{I_\omega^b}).
\]

\begin{proof}
    We argue by induction on $n$.
    For $n=0$, we have
    \[
        I_\emptyset^a=J_a,
        \qquad
        I_\emptyset^b=J_b,
    \]
    so the endpoint agreement follows from the definition of $A_*$.
    
    Assume the statement holds at level $n$, and let $\omega\in\{1,2,3\}^n$ be such that
    all ancestors of $I_\omega^a$ and $I_\omega^b$ are $(L,\delta_1)$--well-stretched.
    By the induction hypothesis, the endpoints of $I_\omega^a$ and $I_\omega^b$ have identical images in $M$. Since $|a-b|<\delta_{L'}<\delta_1$, Lemma~\ref{lem:maxstretch}
    applies to the pair $(I_\omega^a,I_\omega^b)$ and shows that their first-generation
    subdivision points have identical images. Hence each child of $I_\omega^a$ has the
    same endpoint images as the corresponding child of $I_\omega^b$, proving the induction step.
\end{proof}
\vspace{0.2cm}

Now fix $\tau>0$ so small that
\[
    [\alpha_a-\tau,\alpha_a+\tau]
    \cap
    [\alpha_b-\tau,\alpha_b+\tau]
    =
    \emptyset.
\]
After interchanging $a$ and $b$ if necessary, assume that
\[
    \alpha_a+\tau<\alpha_b-\tau.
\]
Choose $n\in\mathbb N$ sufficiently large so that
Lemma~\ref{lem:lawoflargenum} applies to both
$\mathcal D_n^a(J_a)$ and $\mathcal D_n^b(J_b)$ with parameters
$(\tau,\eta)$ and, in addition,
\begin{equation}\label{eq:choice_n_final}
(L')^4e^{-n(\alpha_b-\alpha_a-2\tau)}<1.
\end{equation}

Let
\[
\Adm_{n,a}\subseteq\mathcal D_n^a(J_a),
\qquad
\Adm_{n,b}\subseteq\mathcal D_n^b(J_b)
\]
denote the collections of admissible level-$n$ intervals, where
admissibility is understood with $\delta'=\delta_1$ in Definition~\ref{def:admissible}.

Let
\[
     N_{n,a}
    :=
    \bigcup
    \left\{
    I\in\mathcal D_n^a(J_a):
    I\text{ is non-typical}
    \right\},
\]
and define $ N_{n,b}$ analogously. By
Lemma~\ref{lem:lawoflargenum},
\[
    \mathcal L( N_{n,a})
    \le
    \eta|J_a|,
    \qquad
    \mathcal L( N_{n,b})
    \le
    \eta|J_b|.
\]

Define
\[
    \mathcal A_a
    :=
    \left(
    (J_a\setminus B_a)
    \cap
    \bigcup_{I\in\Adm_{n,a}}I
    \right)
    \setminus
    \mathcal I_n^a(J_a),
\]
and,
\[
    \mathcal A_b
    :=
    \left(
    (J_b\setminus B_b)
    \cap
    \bigcup_{I\in\Adm_{n,b}}I
    \right)
    \setminus
    \mathcal I_n^b(J_b).
\]
We claim that
\[
    J_a\setminus\mathcal A_a
    \subseteq
    B_a\cup N_{n,a}\cup\mathcal I_n^a(J_a).
\]
Indeed, suppose that
\[
    p\in
    J_a\setminus
    \left(
    B_a\cup N_{n,a}\cup\mathcal I_n^a(J_a)
    \right).
\]
Since $p\notin\mathcal I_n^a(J_a)$, it lies in a unique interval
$I\in\mathcal D_n^a(J_a)$. Since $p\notin N_{n,a}$, the
interval $I$ is typical. Since $p\notin B_a$, every
interval along the combinatorial address of $p$, and, in paritcular
$I$, is $(L,\delta_1)$--well-stretched. Thus,
$I\in\Adm_{n,a}$ and hence $p\in\mathcal A_a$. This proves the
claim. Similarly,
\[
J_b\setminus\mathcal A_b
\subseteq
B_b\cup N_{n,b}\cup\mathcal I_n^b(J_b).
\]

Since $\mathcal I_n^a(J_a)$ and $\mathcal I_n^b(J_b)$ are finite, from
Lemma~\ref{lem:shrinkingsmall} and
Lemma~\ref{lem:lawoflargenum}, we obtain
\[
\mathcal L(J_a\setminus\mathcal A_a)
\le
2\eta|J_a|,
\qquad
\mathcal L(J_b\setminus\mathcal A_b)
\le
2\eta|J_b|.
\]
Equivalently,
\[
\mathcal L(\mathcal A_a)
\ge 
(1-2\eta)|J_a|,
\qquad
\mathcal L(\mathcal A_b)
\ge 
(1-2\eta)|J_b|.
\]

Since $f_a|_{H_a}$ and $f_b|_{H_b}$ are $L$-Lipschitz,
Lemma~\ref{lem:hmeausre_lispchitz_maps} gives
\[
    \mathcal H^1\bigl(f_a(J_a\setminus\mathcal A_a)\bigr)
    \le
    L\mathcal L(J_a\setminus\mathcal A_a)
    \le
    2L\eta|J_a|,
\]
\[
    \mathcal H^1\bigl(f_b(J_b\setminus\mathcal A_b)\bigr)
    \le
    L\mathcal L(J_b\setminus\mathcal A_b)
    \le
    2L\eta|J_b|.
\]
Since $J_a$ and $J_b$ are $(L,\delta_{L'})$--well-stretched
and their ordered endpoint images are $m,m'$, we have
\[
    d_M(m,m')
    >
    L(1-\delta_{L'})|J_a|,
\]
\[
    d_M(m,m')
    >
    L(1-\delta_{L'})|J_b|.
\]
Thus,
\[
    \mathcal H^1\bigl(f_a(J_a\setminus\mathcal A_a)\bigr)
    <
    \frac{2\eta}{1-\delta_{L'}}\,d_M(m,m'),
\]
\[
    \mathcal H^1\bigl(f_b(J_b\setminus\mathcal A_b)\bigr)
    <
    \frac{2\eta}{1-\delta_{L'}}\,d_M(m,m').
\]
Therefore, since $\delta_{L'} <1/2$, and $\eta<1/100$, we obtain
\begin{align*}
    &\mathcal H^1\left(
    f_a(J_a\setminus\mathcal A_a)
    \cup
    f_b(J_b\setminus\mathcal A_b)
    \right)\\
    &\qquad<
    \frac{4\eta}{1-\delta_{L'}}\,d_M(m,m')
    <
    8\eta\,d_M(m,m')
    <
    d_M(m,m').
\end{align*}

Since $f_a|_{J_a}$ and $f_b|_{J_b}$ are embeddings of intervals
into the metric tree $M$, their images are the unique arcs joining
their endpoint images. Hence
\[
    f_a(J_a)=[m,m']_M=f_b(J_b).
\]
In addition, we have $ \mathcal H^1([m,m']_M)=d_M(m,m')$. It follows that there exists
\[
    z\in[m,m']_M
    \setminus
    \left(
    f_a(J_a\setminus\mathcal A_a)
    \cup
    f_b(J_b\setminus\mathcal A_b)
    \right).
\]

Let
\[
p_a:=(f_a|_{J_a})^{-1}(z),
\qquad
p_b:=(f_b|_{J_b})^{-1}(z).
\]
By the choice of $z$, $p_a\in\mathcal A_a$ and $p_b\in\mathcal A_b$. In particular,
\[
    p_a\notin B_a,
    \quad
    p_b\notin B_b,
    \qquad \text{ and } \qquad 
    p_a\notin\mathcal I_n^a(J_a), 
    \quad
    p_b\notin\mathcal I_n^b(J_b).
\]
Thus there exist unique words
\[
\omega^a,\omega^b\in\{1,2,3\}^n
\]
such that
\[
p_a\in I_{\omega^a}^a,
\qquad
p_b\in I_{\omega^b}^b.
\]
Because $p_a\in\mathcal A_a$ and $p_b\in\mathcal A_b$, these
unique intervals satisfy
\[
I_{\omega^a}^a\in\Adm_{n,a},
\qquad
I_{\omega^b}^b\in\Adm_{n,b}.
\]

\noindent
\textit{Claim 2.} We claim that $\omega^a=\omega^b$.
\begin{proof}
Let $k$ be the length of the longest common prefix of $\omega^a$ and
$\omega^b$. Suppose, toward a contradiction, that $k<n$, and denote
this common prefix by $\sigma$. Let
\[
K_a:=I_\sigma^a,
\qquad
K_b:=I_\sigma^b.
\]
Since $p_a\notin B_a$ and $p_b\notin B_b$, every interval along their
respective combinatorial addresses is $(L,\delta_1)$--well-stretched.
Therefore, by Claim~1
\[
f_a(s_{K_a})=f_b(s_{K_b}),
\qquad
f_a(t_{K_a})=f_b(t_{K_b}).
\]
Lemma~\ref{lem:maxstretch} now shows that the corresponding subdivision
points of $K_a$ and $K_b$ have the same images. Consequently, for each
$i\in\{1,2,3\}$, the $i$-th child of $K_a$ and the $i$-th child of
$K_b$ have the same image in $M$. In addition,
\[
    p_a\notin\mathcal I_n^a(J_a),
    \qquad
    p_b\notin\mathcal I_n^b(J_b),
\]
so neither $p_a$ nor $p_b$ is a subdivision point at level $k+1$.
Thus, by injectivity of $f_a$ and $f_b$, the point
\[
z=f_a(p_a)=f_b(p_b)
\]
lies in the relative interior of one of these child arcs. Since
corresponding children have the same images, $p_a$ and $p_b$ lie in
corresponding children. Hence the $(k+1)$-st symbols of $\omega^a$ and
$\omega^b$ agree, contradicting the maximality of $k$. Hence $\omega^a=\omega^b$.
\end{proof}

Applying Claim~1 once more, now to the common address $\omega:=\omega^a=\omega^b$, gives
\[
f_a(s_{I_\omega^a})=f_b(s_{I_\omega^b}),
\qquad
f_a(t_{I_\omega^a})=f_b(t_{I_\omega^b}).
\]
In particular,
\[
    d_M\bigl(
    f_a(s_{I_\omega^a}),
    f_a(t_{I_\omega^a})
    \bigr)
    =
    d_M\bigl(
    f_b(s_{I_\omega^b}),
    f_b(t_{I_\omega^b})
    \bigr).
\]
The global $(1/L',L')$-bi-Lipschitz bounds therefore give
\begin{equation}\label{eq:ratio_admissible_new}
    \frac{1}{(L')^2}
    \le
    \frac{|I_\omega^a|}{|I_\omega^b|}
    \le
    (L')^2.
\end{equation}
Similarly, since $J_a$ and $J_b$ have the same endpoint images,
\begin{equation}\label{eq:ratio_parent_new}
    \frac{1}{(L')^2}
    \le
    \frac{|J_a|}{|J_b|}
    \le
    (L')^2.
\end{equation}

Since $I_\omega^a\in\Adm_{n,a}$ and
$I_\omega^b\in\Adm_{n,b}$, their typicality gives
\[
    e^{(\alpha_a-\tau)n}
    \le
    \frac{|I_\omega^a|}{|J_a|}
    \le
    e^{(\alpha_a+\tau)n}
\]
and
\[
    e^{(\alpha_b-\tau)n}
    \le
    \frac{|I_\omega^b|}{|J_b|}
    \le
    e^{(\alpha_b+\tau)n}.
\]
Using \eqref{eq:ratio_parent_new}, we obtain
\begin{align*}   
    \frac{|I_\omega^a|}{|I_\omega^b|}
    &=
    \frac{|I_\omega^a|}{|J_a|}
    \frac{|J_a|}{|J_b|}
    \frac{|J_b|}{|I_\omega^b|}\\
    &\le
    e^{(\alpha_a+\tau)n}
    (L')^2
    e^{-(\alpha_b-\tau)n}\\
    &=
    (L')^2e^{-n(\alpha_b-\alpha_a-2\tau)}.
\end{align*}
By \eqref{eq:choice_n_final}, we obtain 
\[
    \frac{|I_\omega^a|}{|I_\omega^b|}
    <
    \frac{1}{(L')^2},
\]
contradicting \eqref{eq:ratio_admissible_new}.

This contradiction disproves the assumption that $E_M$ is
uncountable. Hence $E_M$ is countable, as required.
\qed

\section{Bi-Lipschitz embedding of ultrametric spaces}\label{S:ultrametric}

The proof of the failure of a universal element among trees was based on connectivity. Motivated by this connection, we consider in this section the analogous universality problem for ultrametric spaces, which are totally disconnected. In contrast with the negative result for geodesic trees, we obtain a positive result when the Assouad dimension of the source is strictly smaller than the lower Assouad dimension of the target.

The proof is again tree-theoretic. The Assouad dimension of the source
controls the number of children in its associated tree, while the lower
Assouad dimension of the target guarantees enough separated sub-balls
to realize those children. Passing to limits along branches then
produces the required embedding.

\subsubsection{Relation to previous work.}
Bi-Lipschitz embeddings of ultrametric spaces have been studied for
several types of targets. Luukkainen and Movahedi-Lankarani proved
that an ultrametric space $X$ embeds bi-Lipschitzly into $\R^n$
whenever $\adim X<n$, and Luosto proved that this condition is also necessary
\cite{LuukLank94:UMBL,Luosto1996}. Bonk and Foertsch obtained embeddings of compact doubling ultrametric spaces into suitable symbolic Cantor spaces
\cite[Proposition~6.3]{BonkFoertsch06:asymp_curv_um}, while Brodskiy, Dydak, Higes, and Mitra constructed a fixed complete separable ultrametric space that is $3$-bi-Lipschitz universal for all separable ultrametric spaces
\cite[Corollary~2.8]{BrodDydaetal07:nagata_dim_0}.

Embedding results for regular targets were obtained by Mattila and Saaranen and, more generally, by Lü, Lou, Wen, and Xi
\cite[Theorem~3.3]{MatillaSaaranen09:AR_BL}
\cite[Theorem~2]{FMZL15:blhomofractalUM}. In particular, the latter result includes the case of a compact $s$-Ahlfors regular ultrametric source and a compact $t$-Ahlfors regular target when $s<t$.

The point of Theorem~\ref{thm:um_embedding_intro} is not the Ahlfors-regular specialization itself. Rather, it replaces regularity and homogeneity by one-sided covering estimates: the source is controlled only by its Assouad dimension, and the target only by its lower Assouad dimension. Thus the source may be nonhomogeneous, and the prescribed complete target need not be Ahlfors regular, doubling, homogeneous, or ultrametric.

\subsection{Preliminaries}\label{ss:prelim_um} In this subsection we introduce notions of ultrametric spaces, Ahlfors regularity and Lower Assouad dimension needed for the proof of Theorem~\ref{thm:um_embedding_intro}.

Recall that for a nonempty subset $E\subseteq X$ and $r>0$, $N_X(E, r)$ denotes the smallest number of balls of radius $r$ required
to cover $E$. If no such finite cover exists, we set $N_X(E, r)=\infty$. When $X$ is understood from the context, we simply write $N(E,r)$.

\begin{definition}[Ahlfors regularity]
Let $Q>0$. A metric space $(X,d_X)$ is called
$Q$-Ahlfors regular if there exist a Borel measure $\mu$ on $X$
and a constant $C\ge 1$ such that
\[
    C^{-1}r^Q
    \le 
    \mu\bigl(B_X(x,r)\bigr)
    \le 
    Cr^Q
\]
for every $x\in X$ and every $0<r\le \diam(X)$. If $X$ is unbounded,
the inequalities are required to hold for every $r>0$.
\end{definition}

\begin{definition}[Lower Assouad dimension]
The lower Assouad dimension of a metric space $X$ is
\[
\ladim X
=
\sup\left\{
q\ge 0:
\begin{array}{l}
\text{there exists $c_X>0$ such that, for every $x\in X$}\\[1mm]
\text{and every $0<r<R<\diam(X)$,}\\[1mm]
N_X\bigl(B_X(x,R), r\bigr)
\ge 
c_X\left(\dfrac{R}{r}\right)^q
\end{array}
\right\}.
\]
When $X$ is unbounded, the condition $R<\diam(X)$ is omitted.
\end{definition}

Thus, the lower Assouad dimension measures the smallest local
covering growth occurring anywhere in the space and at any pair of
scales. In contrast, the Assouad dimension measures the largest such
growth.

If $X$ is $Q$-Ahlfors regular, then
\[
\ladim X
=
\dim_{\mathrm A}X
=
Q.
\]
In particular, every $Q$-Ahlfors regular target has lower Assouad
dimension $Q$. For more details, see \cite{fraser:ass}.

\begin{definition}[Ultrametric space]
A metric space $(X,d_X)$ is called an ultrametric space if
\[
    d_X(x,z)
    \le
    \max\{d_X(x,y),d_X(y,z)\}
\]
for every $x,y,z\in X$. This inequality is called the strong triangle inequality.
\end{definition}

Ultrametric balls have a hierarchical structure. Every point of an
ultrametric ball is a center of that ball, and any two balls that
intersect are nested. More precisely, if
\[
B_X(x,r)\cap B_X(y,s)\neq\varnothing
\qquad\text{and}\qquad
r\le s,
\]
then
\[
B_X(x,r)\subseteq B_X(y,s).
\]
In particular, two balls of the same radius are either equal or
disjoint.

Ultrametric spaces are naturally related to rooted trees through their
hierarchies of nested balls; see \cite{Hughes04:UM}. In the proof
of Theorem~\ref{thm:um_embedding_intro}, we encode the ultrametric space by a discrete tree of balls and realize this tree inside the target by a separated family of nested balls.

Let $X$ be bounded, and fix a sequence
\[
    r_0>r_1>r_2>\cdots
\]
converging to $0$, with $r_0\geq\diam(X)$. For each $n\geq0$, define
\[
    x\sim_n y
    \quad\Longleftrightarrow\quad
    d_X(x,y)\le  r_n.
\]
The strong triangle inequality implies that $\sim_n$ is an
equivalence relation. The equivalence class of $x$ is precisely
\[
    [x]_n=\overline B_X(x,r_n).
\]

A vertex at level $n$ is an equivalence class in $\mathcal P_n$,
where $\mathcal P_n$ denotes the partition induced by $\sim_n$.
Since $r_0\geq\diam(X)$, we have
\[
    \mathcal P_0=\{X\},
\]
so $X$ is the root. A vertex in $\mathcal P_{n+1}$ is joined to the
unique vertex in $\mathcal P_n$ containing it.

Thus, moving down the tree corresponds to passing to successively
smaller nested balls. If $x,y\in X$ belong to the same class at level
$n$ but to distinct classes at level $n+1$, then
\[
    r_{n+1}<d_X(x,y)\le  r_n.
\]
Hence the distance between two points is determined, up to the ratio
$r_n/r_{n+1}$, by the first level at which their corresponding
branches separate. In particular, for geometric scales
$r_n=\delta^n$ for small $\delta>0$, the first splitting level determines the distance up
to the fixed multiplicative factor $\delta^{-1}$.

Every point $x\in X$ determines a descending branch
\[
    [x]_0\supseteq[x]_1\supseteq[x]_2\supseteq\cdots
\]
of the ball tree. If $X$ is complete, then every descending branch
whose radii tend to zero determines a unique point of $X$.

% \begin{remark} %
% After rescaling, suppose that $\diam(X)\le 1$. There is also a
% canonical rooted $\mathbb R$-tree associated with $X$. Define
% \[
%     T_X=(X\times[0,\infty))/\!\sim,
% \]
% where
% \[
%     (x,s)\sim(y,t)
%     \quad\Longleftrightarrow\quad
%     s=t
%     \ \text{ and }\
%     d_X(x,y)\leq e^{-t}.
% \]
% Writing $[x,t]$ for the equivalence class of $(x,t)$, the tree metric
% is
% \[
%     d_{T_X}\bigl([x,s],[y,t]\bigr)
%     =
%     s+t
%     -
%     2\min\{s,t,-\log d_X(x,y)\},
% \]
% with the convention $-\log 0=\infty$.

% The root is $o=[x,0]$, which is independent of $x$ because
% $\diam(X)\leq1$. Each $x\in X$ determines a geodesic ray
% \[
%     \xi_x(t)=[x,t].
% \]
% For $x\neq y$, the rays $\xi_x$ and $\xi_y$ agree up to height
% $-\log d_X(x,y)$ and then separate. Consequently,
% \[
%     (\xi_x\mid\xi_y)_o=-\log d_X(x,y),
% \]
% and the natural end metric satisfies
% \[
%     d_{\partial T_X}(\xi_x,\xi_y)
%     =
%     \exp\bigl(-(\xi_x\mid\xi_y)_o\bigr)
%     =
%     d_X(x,y).
% \]
% If $X$ is complete, then $X$ is isometric to the full end space of
% $T_X$. If $X$ is not complete, the full end space is naturally
% isometric to the completion of $X$. See \cite{Hughes04:UM}.
% \end{remark}

\subsection{Consequences of Theorem~\ref{thm:um_embedding_intro}}
% This is the weakest assumption I can think about

Before presenting the proof, we record several consequences of
Theorem~\ref{thm:um_embedding_intro}.

\begin{corollary} 
Let $X$ be a bounded ultrametric space and let $Y$ be a complete
$Q$-Ahlfors regular metric space. If
\[
    \adim X<Q,
\]
then $X$ admits a bi-Lipschitz embedding into $Y$.
\end{corollary}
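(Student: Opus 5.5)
This corollary follows from Theorem~\ref{thm:um_embedding_intro} once we know that $\ladim Y = Q$ for a complete $Q$-Ahlfors regular space $Y$. We then have
\[
    \adim X < Q = \ladim Y,
\]
and the theorem produces the bi-Lipschitz embedding of $X$ into $Y$. The only ingredient is the inequality $\ladim Y\ge Q$, which was quoted above with a reference to \cite{fraser:ass}. For completeness I would verify it directly by a volume-counting argument.

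Let $\mu$ and $C$ be as in the definition of Ahlfors regularity. Fix $y\in Y$ and $0<r<R<\diam(Y)$, and cover $B_Y(y,R)$ by $N=N_Y(B_Y(y,R),r)$ balls $B_Y(z_i,r)$. Subadditivity and the two-sided bounds give
\[
    C^{-1}R^Q
    \le \mu\bigl(B_Y(y,R)\bigr)
    \le \sum_{i=1}^N \mu\bigl(B_Y(z_i,r)\bigr)
    \le N\,C\,r^Q .
\]
This uses $R\le\diam(Y)$, so the lower bound applies at scale $R$, and $r\le \diam(Y)$ for the upper bound. If the centers $z_i$ are not required to lie in $Y$, nothing changes, since the balls live in $Y$ anyway; if they must lie in $B_Y(y,R)$, one recenters at the cost of doubling the radius, which only alters constants. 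Rearranging gives
\[
    N_Y\bigl(B_Y(y,R),r\bigr)\ge C^{-2}\left(\frac Rr\right)^Q .
\]
Hence the defining condition for lower Assouad dimension holds with $q=Q$ and $c_Y=C^{-2}$, so $\ladim Y\ge Q$. In the unbounded case the same argument works for every $r<R$.

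There is no real obstacle here. The only point to check is that $Y$ satisfies the hypotheses of Theorem~\ref{thm:um_embedding_intro}: completeness is assumed, and boundedness is required only of $X$. So the corollary is immediate once the covering estimate is recorded. The equality $\ladim Y=Q$ is not needed; the inequality $\ladim Y\ge Q$ suffices.
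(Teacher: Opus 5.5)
Your proposal is correct and follows the paper's proof. The paper simply notes that $\ladim Y = Q$ for a $Q$-Ahlfors regular space (citing Fraser) and then applies Theorem~\ref{thm:um_embedding_intro}; your volume-counting bound $N_Y(B_Y(y,R),r)\ge C^{-2}(R/r)^Q$ is a valid self-contained justification of the only inequality actually needed, $\ladim Y\ge Q$.
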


\begin{proof}
Since $Y$ is $Q$-Ahlfors regular, we have $\ladim Y=Q$. The conclusion
follows from Theorem~\ref{thm:um_embedding_intro}.
\end{proof}

Equivalently, every complete $Q$-Ahlfors regular space is
bi-Lipschitz universal for bounded ultrametric spaces of Assouad
dimension strictly less than $Q$.

Since $\mathbb R^n$ is $n$-Ahlfors regular, we recover the bounded
case of the theorem of Luukkainen and Movahedi-Lankarani \cite{LuukLank94:UMBL}. 

% \begin{corollary}
% Let $X$ be a bounded ultrametric space and let $n\ge 1$. Then $X$
% admits a bi-Lipschitz embedding into $\mathbb R^n$ if and only if
% \[
% \adim X<n.
% \]
% \end{corollary}

\begin{corollary}
Let $X$ be a bounded uniformly disconnected metric space and let $Y$
be complete. If
\[
    \adim X<\ladim Y,
\]
then $X$ admits a bi-Lipschitz embedding into $Y$.
\end{corollary}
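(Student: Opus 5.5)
The plan is to reduce this corollary to Theorem~\ref{thm:um_embedding_intro} by replacing $X$ with a bi-Lipschitz equivalent ultrametric space. Recall that $X$ is \emph{uniformly disconnected} if there is $c\in(0,1)$ such that no $c$-chain $x_0,\dots,x_N$ with $x_0\neq x_N$ and $d(x_{i-1},x_i)\le c\,d(x_0,x_N)$ exists. The standard fact (David--Semmes) is that a metric space is uniformly disconnected if and only if it is bi-Lipschitz equivalent to an ultrametric space. I would establish this directly via the chain ultrametric, then transfer the hypotheses.

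\textbf{Step 1: construct the ultrametric.} Define
\[
    \rho(x,y):=\inf\Bigl\{\max_{1\le i\le N} d(x_{i-1},x_i):\ x_0=x,\ x_N=y\Bigr\},
\]
the infimum over all finite chains from $x$ to $y$.
\begin{itemize}
\item Concatenating chains shows that $\rho$ satisfies the strong triangle inequality.
\item The one-step chain gives $\rho\le d$.
\item For the reverse bound, suppose $\rho(x,y)<c\,d(x,y)$ for some $x\neq y$. Then there is a chain from $x$ to $y$ with every step at most $c\,d(x,y)$. This contradicts uniform disconnectedness.
\end{itemize}
Hence $c\,d\le\rho\le d$. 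In particular $\rho$ is a genuine ultrametric, the identity $(X,d)\to(X,\rho)$ is bi-Lipschitz, and $(X,\rho)$ is bounded.

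\textbf{Step 2: transfer the dimension hypothesis.} Assouad dimension is invariant under bi-Lipschitz maps. An $r$-ball in one metric sits inside a $C r$-ball in the other, so covering numbers change only by constants depending on the doubling-type estimates already present. Therefore
\[
    \adim (X,\rho)=\adim (X,d)<\ladim Y.
\]

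\textbf{Step 3: conclude.} Theorem~\ref{thm:um_embedding_intro} applies to the bounded ultrametric space $(X,\rho)$ and the complete space $Y$. It gives a bi-Lipschitz embedding $g:(X,\rho)\to Y$. Composing with the identity $(X,d)\to(X,\rho)$ yields the desired bi-Lipschitz embedding of $(X,d)$ into $Y$.

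The only point needing care is the lower bound $\rho\ge c\,d$. This requires that the definition of uniform disconnectedness used in the paper is the chain formulation, or that it is equivalent to it. If the paper instead takes ``bi-Lipschitz equivalent to an ultrametric space'' as the definition, Step 1 becomes trivial.
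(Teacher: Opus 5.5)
Your proposal is correct and follows the paper's route: replace $X$ by a bi-Lipschitz equivalent ultrametric, use bi-Lipschitz invariance of Assouad dimension, apply Theorem~\ref{thm:um_embedding_intro}, and compose with the identity map. The only difference is that the paper cites the David--Semmes ultrametrization theorem, whereas you prove that step directly via the chain ultrametric $\rho$ with $c\,d\le\rho\le d$. This is valid under the chain formulation of uniform disconnectedness, which is equivalent to the David--Semmes definition.
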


\begin{proof}
By the ultrametrization theorem for uniformly disconnected spaces \cite[Proposition~15.7]{DavidSemmes97:uniform_discon}, there is an ultrametric $\rho$ on $X$ such that $(X,d_X)$ and $(X,\rho)$ are bi-Lipschitz equivalent. Since Assouad dimension is
bi-Lipschitz invariant,
\[
    \adim(X,\rho)=\adim(X,d_X).
\]
Apply Theorem~\ref{thm:um_embedding_intro} to $(X,\rho)$ and then compose
with the bi-Lipschitz equivalence.
\end{proof}

% This also re
% \begin{corollary}
% Let $0<s<t$. Suppose that $X$ is a bounded, uniformly disconnected,
% $s$-Ahlfors regular space and that $Y$ is a complete
% $t$-Ahlfors regular space. Then $X$ admits a bi-Lipschitz embedding
% into $Y$.
% \end{corollary}

% \begin{proof}
% Ahlfors regularity gives
% \[
% \adim X=s
% \qquad\text{and}\qquad
% \ladim Y=t.
% \]
% The preceding corollary therefore applies.
% \end{proof}

For compact sources and targets, this recovers the Ahlfors regular Theorem~2 and Remark~6 of L\"u-Lou-Wen-Xi \cite{FMZL15:blhomofractalUM}.

There are a few remarks in order. 

\begin{remark}
    It is enough that $Y$ contain a complete subspace $Z$ satisfying
    \[
        \ladim Z>\adim X.
    \]
    Indeed, Theorem~\ref{thm:um_embedding_intro} gives a bi-Lipschitz embedding
    of $X$ into $Z$, and hence into $Y$. This formulation is useful because lower Assouad dimension is not monotone under inclusion.
\end{remark}

If one wishes to allow an arbitrary, not necessarily complete,
subspace $Z\subseteq Y$, one may pass to its closure, using the fact
that lower Assouad dimension is unchanged under completion.

\begin{remark}
However, completeness of the target cannot be omitted. Let
\[
X=\{0,1\}^{\mathbb N},
\qquad
d(\omega,\tau)=3^{-|\omega\wedge\tau|},
\]
where $|\omega \wedge \tau|$ denotes the lenth of their maximal common prefix and let
\[
Y=\mathbb Q\cap[0,1].
\]
Then
\[
\adim X=\frac{\log 2}{\log 3}<1=\ladim Y,
\]
but no embedding $X\to Y$ exists because $X$ is uncountable and $Y$
is countable.
\end{remark}

\begin{remark}
The strict inequality in Theorem~\ref{thm:um_embedding_intro} cannot in
general be replaced by a non-strict inequality. For $n\ge 1$, let
\[
X_n=\{0,1\}^{\mathbb N}
\]
and define
\[
d_n(\omega,\tau)
=
2^{-|\omega\wedge\tau|/n}
\]
for $\omega\neq\tau$, where $|\omega\wedge\tau|$ denotes the length
of their maximal common prefix. Then $X_n$ is a compact
$n$-Ahlfors regular ultrametric space, and hence
\[
\adim X_n=n=\ladim\mathbb R^n.
\]
Nevertheless, \cite[Theorem 4.5]{Luosto1996} implies that $X_n$ does not admit a
bi-Lipschitz embedding into $\mathbb R^n$. Thus equality of the two dimensions does not provide a general embedding theorem. It may still permit an embedding for particular pairs of spaces.
\end{remark}

\subsection{Proof of Theorem~\ref{thm:um_embedding_intro}}
If $X$ contains at most one point, then the conclusion is immediate. We may therefore assume that $0<\diam(X) < \infty$. Replacing $d_X$ by the rescaled metric $d_X/\diam(X)$, which is a bi-Lipschitz map, we may assume that $\diam(X) = 1$. Indeed, the rescaling is a similarity, and hence bi-Lipschitz. Similarly, we may assume that $\diam(Y) >1$.

Choose exponents $s$ and $t$ such that
\[
    \adim X<s<t<\ladim Y.
\]
After a further similarity rescaling of $Y$ if necessary, by the definitions of the Assouad and lower Assouad dimensions, there
exist constants $C_X\ge 1$, and $c_Y>0$ such that

\begin{equation}\label{eq:um_assouad-bound}
    N_X\bigl(B_X(x,R), r\bigr)
    \le 
    C_X\left(\frac{R}{r}\right)^s
\end{equation}
for every $x\in X$ and $0<r<R\le 1$, and
\begin{equation}\label{eq:um-lower-assoud-bound}
    N_Y\bigl(B_Y(y,R), r\bigr)
    \ge 
    c_Y\left(\frac{R}{r}\right)^t
\end{equation}
for every $y\in Y$ and $0<r<R\le 1$.

Since $s<t$, we may choose $\delta >0$ sufficiently small that
\begin{equation}\label{eq:delta_choice}
    0\,<\,\delta\,<\,\frac{1}{8} 
    \qquad \text{and} \qquad 
    c_Y(8\delta)^{-t} \,\ge\,C_X \delta^{-s}.
\end{equation}
Define the same source and target scales by 
\[
r_n = \delta^n,  \qquad n\ge 0.
\]

For every $n\ge 0$, define an equivalence relation on $X$ by
\[
    x\sim_n x'
    \quad\Longleftrightarrow\quad
    d_X(x,x')\le  r_n,
\]
and let $\mathcal P_n$ be the resulting partition. Since $r_0 =1$, we have $\mathcal{P}_0 = \{X\}$.

If $B\in\mathcal P_n$, define its collection of children by
\[
\operatorname{Ch}(B)
:=
\left\{
B'\in\mathcal P_{n+1}:B'\subseteq B
\right\}.
\]
A ball of radius $r_{n+1}$ can meet at most one child of $B$. Indeed,
two balls of the same radius in an ultrametric space are either equal
or disjoint. Consequently, \eqref{eq:um_assouad-bound} gives
\begin{equation}\label{eq:number-children}
    \#\operatorname{Ch}(B)
    \le 
    N_X(B, {r_{n+1}})
    \le 
    C_X\left(\frac{r_n}{r_{n+1}}\right)^s
    =
    C_X\delta^{-s}.
\end{equation}
Thus we obtain a tree of balls corresponding to $X$, and every vertex in the ball tree of $X$ has at most $C_X\delta^{-s}$ children.

We now construct a separated tree of nested balls inside $Y$. Pick any $y_{\varnothing}\in Y$. Assign
the ball
\[
    \mathcal B_{\varnothing}
    :=
    \overline B_Y(y_{\varnothing},1)
\]
to the root $\mathcal{P}_0$. Suppose that a target ball
\[
\mathcal B_v=\overline B_Y(y_v,r_n)
\]
has been assigned to a vertex $v$ at level $n$. The vertex $v$ corresponds to a ball, $V \subset X$ of radius $r_n$. Set \[
\operatorname{Ch}(V) := \{V_1, \ldots, V_m\} \quad\text{ where }\quad m\le C_X \delta^{-s}.
\]

We want to assign one smaller target ball to each child. We do not need to divide or cover all of $\mathcal B_v$. We only need to find at least $m$ mutually separated smaller balls inside it.

Since $$4 r_{n+1} = 4 \delta \delta^n < \frac{\delta^n}{2} = \frac{r_n}{2},$$
the lower Assouad estimate \eqref{eq:um-lower-assoud-bound} and \eqref{eq:delta_choice} give
\begin{align*}
    N\left(B_Y\left(y_v,\frac{r_n}{2}\right), 4 r_{n+1}\right) \,&\,\ge c_Y\left(\frac{r_n}{8r_{n+1}}\right)^t = c_Y (8\delta)^{-t} \\
    \,&\,\ge C_X \delta^{-s} \\
    \,&\,\ge \#\operatorname{Ch} (V).
\end{align*}

Choose a maximal $4r_{n+1}$-separated set $Z_v\subset B_Y\left(y_v,\frac{r_n}{2}\right)$.
By maximality, the balls of radius $4r_{n+1}$ centered at the points
of $Z_v$ cover $B_Y(y_v,r_n/2)$. Therefore,
\[
    \# Z_v
    \ge 
    N\left(B_Y\left(y_v,\frac{r_n}{2}\right),4r_{n+1}\right)
    \ge   \#\operatorname{Ch} (V).
\]
Choose distinct points
\[
    y_{v_1},\ldots,y_{v_m}\in Z_v
\]
corresponding to $V_1,\ldots,V_m$, and define
\[
    \mathcal B_{v_i}
    :=
    \overline B_Y(y_{v_i},r_{n+1}),
    \qquad i=1,\ldots,m.
\]
Then $\mathcal B_{v_i}\subseteq\mathcal B_v$,
since
\[
    d_Y(y_{v_i},y_v)+r_{n+1}
    <
    \frac{r_n}{2}+r_{n+1}
    =
    \left(\frac12+\delta\right)r_n
    <r_n.
\]
Moreover, for $i\neq j$,
\[
    \dist
    \bigl(\mathcal B_{v_i},\mathcal B_{v_j}\bigr)
    \ge 
    d_Y(y_{v_i},y_{v_j})-2r_{n+1}
    >
    2r_{n+1}.
\]

Repeating this construction inductively assigns a target ball
$\mathcal B_v$ to every vertex $v$ of the source tree, in such a way
that child balls are contained in their parent balls and distinct
children are uniformly separated.

For each $x\in X$, let $V_n(x)\in\mathcal P_n$ be the unique ball
containing $x$, and let $v_n(x)$ be its corresponding vertex. Then
\[
    \mathcal B_{v_0(x)}
    \supseteq
    \mathcal B_{v_1(x)}
    \supseteq
    \mathcal B_{v_2(x)}
    \supseteq\cdots .
\]
If $m\geq n$, then both $y_{v_m(x)}$ and $y_{v_n(x)}$ belong to
$\mathcal B_{v_n(x)}$. Consequently,
\[
    d_Y\bigl(y_{v_m(x)},y_{v_n(x)}\bigr)
    \le 
    \diam\bigl(\mathcal B_{v_n(x)}\bigr)
    \le 2r_n.
\]
Since $r_n=\delta^n\to0$, the sequence
$\bigl(y_{v_n(x)}\bigr)_{n\ge 0}$ is Cauchy. By completeness of $Y$,
we may define
\[
    f(x):=\lim_{n\to\infty}y_{v_n(x)}.
\]

% For each fixed $n$, the tail of this sequence is contained in the
% closed ball $\mathcal B_{v_n(x)}$. Therefore,
% \[
%     f(x)\in\mathcal B_{v_n(x)}
%     \qquad\text{for every }n\ge 0.
% \]
In particular,
\[
    \{f(x)\}
    =
    \bigcap_{n=0}^{\infty}\mathcal B_{v_n(x)},
\]
because the diameters of these nested balls tend to zero.

It remains to prove that $f$ is bi-Lipschitz. Let $x,x'\in X$ be
distinct, and let $n$ be the last level at which they belong to the
same element of $\mathcal P_n$. Thus,
\[
    V_n(x)=V_n(x'),
    \qquad
    V_{n+1}(x)\neq V_{n+1}(x').
\]
By the definition of the partitions,
\begin{equation}\label{eq:source-first-split}
    r_{n+1}<d_X(x,x')\le  r_n.
\end{equation}

The balls $\mathcal B_{v_{n+1}(x)}$ and
$\mathcal B_{v_{n+1}(x')}$ correspond to distinct children of the
same vertex. Therefore, their separation gives
\[
    d_Y\bigl(f(x),f(x')\bigr)>2r_{n+1}.
\]
On the other hand, both $f(x)$ and $f(x')$ belong to the common
parent ball $\mathcal B_{v_n(x)}$, and hence
\[
    d_Y\bigl(f(x),f(x')\bigr)\le  2r_n.
\]
This implies,
\begin{equation}\label{eq:target-first-split}
    2r_{n+1}
    <
    d_Y\bigl(f(x),f(x')\bigr)
    \le 
    2r_n.
\end{equation}

Since $r_n=\delta^n$, equations
\eqref{eq:source-first-split} and
\eqref{eq:target-first-split} imply
\[
    2\delta\,d_X(x,x')
    <
    d_Y\bigl(f(x),f(x')\bigr)
    <
    \frac{2}{\delta}\,d_X(x,x').
\]
Thus $f$ is a bi-Lipschitz embedding. \qed

\bibliographystyle{acm}
\bibliography{references}
\end{document}